\newcommand{\Lx}{\left(}
\newcommand{\Rx}{\right)}
\newcommand{\LB}{\left[}
\newcommand{\RB}{\right]}
\newcommand{\R}{\mathbb{R}}
\newcommand{\N}{\mathbb{N}}
\newcommand{\Z}{\mathbb{Z}}
\newcommand{\F}{\mathbb{F}}
\newcommand{\C}{\mathbb{C}}
\newcommand{\res}{\!\!\mid}
\newtheorem{theorem}{Theorem}[section]
\newtheorem{lemma}[theorem]{Lemma}
\newtheorem{proposition}[theorem]{Proposition}
\newtheorem{definition}[theorem]{Definition}
\newtheorem{corollary}[theorem]{Corollary}
\newtheorem*{theorem*}{Theorem}
\newtheorem*{lemma*}{Lemma}
\newtheorem*{hypothesis*}{Hypothesis}
\newtheorem*{proposition*}{Proposition}
\newtheorem*{definition*}{Definition}
\newtheorem*{corollary*}{Corollary}
\newtheorem*{conjecture*}{Conjecture}
\newtheorem*{claim*}{Claim}
\theoremstyle{remark}
\newtheorem{remark}[theorem]{Remark}
\newtheorem*{remark*}{Remark}
\newtheorem{example}[theorem]{Example}
\numberwithin{equation}{section}
\DeclarePairedDelimiterX{\set}[1]{\{}{\}}{\setargs{#1}}
\NewDocumentCommand{\setargs}{>{\SplitArgument{1}{;}}m}
{\setargsaux#1}
\NewDocumentCommand{\setargsaux}{mm}
{\IfNoValueTF{#2}{#1} {#1\,\delimsize|\,\mathopen{}#2}}
\let\badphi\phi
\let\phi\varphi
\let\varphi\badphi
\tikzset{vtx/.style={circle, fill, inner sep=1.5pt}}
\DeclareMathOperator{\sh}{Sh}
\DeclareMathOperator{\mush}{\mu Sh}
\DeclareMathOperator{\perf}{Perf}
\DeclareMathOperator{\prop}{Prop}
\DeclareMathOperator{\der}{\mathbf{D}\!}
\DeclareMathOperator{\rep}{\mathbf{Rep}}
\DeclareMathOperator{\kk}{\mathbf{k}}
\DeclareMathOperator{\vect}{\mathbf{Vect}}
\DeclareMathOperator{\m}{m\!}
\DeclareMathOperator{\dgcat}{\mathbf{dg-Cat}}
\DeclareMathOperator{\musupp}{\mu supp}
\DeclareMathOperator{\cat}{\mathbf{Cat}}
\DeclareMathOperator{\icat}{\infty-\mathbf{Cat}}
\newcommand{\cyl}{\mathbb{S}}
\renewcommand{\res}[1]{\!\!\mid_{#1}}
\DeclareMathOperator{\sym}{Sym}
\newcommand{\eps}{\(\epsilon\)}
\DeclareMathOperator{\mods}{\mathbf{Mod}}
\DeclareMathOperator{\fun}{Fun}
\DeclareMathOperator{\ext}{Ext}
\DeclareMathOperator{\muhom}{\mu hom}
\DeclareMathOperator{\DS}{DS}
\DeclareMathOperator{\id}{Id}
\DeclareMathOperator{\codim}{codim}
\DeclareMathOperator{\link}{lk}
\DeclareMathOperator{\image}{im}
\DeclareMathOperator{\ob}{Ob}
\DeclareMathOperator{\dgcatmor}{\mathbf{dg-Cat}^{Mor}}
\DeclareMathOperator{\cc}{\mathcal{C}}
\DeclareMathOperator{\lgr}{LGr}
\DeclareMathOperator{\supp}{supp}
\newcommand{\sk}{\mathfrak{sk}}
\begin{document}
\title{Legendrian non-squeezing via microsheaves}
\author{Eric Kilgore}
\date{}
\maketitle

\begin{abstract}
We show that Legendrian pre-quantization lifts of many non-exact Lagrangian submanifolds in \( \C^n \) retain some quantitative rigidity from the symplectic base. In particular, they cannot be moved by Legendrian isotopy into an arbitrarily small pre-quantized cylinder. This is a high dimensional generalization of results of Dimitroglou Rizell--Sullivan in dimension 3. In this setting, we give a new proof of non-squeezing using normal rulings, and in high dimension, we obtain our results using a category of (micro)sheaves associated to a Legendrian submanifold of pre-quantizations.
\end{abstract}

\setcounter{tocdepth}{1}

\tableofcontents

\setcounter{tocdepth}{2}

\section{Introduction}\label{sec:intro}
Symplectic embeddings have long been a topic of considerable interest to symplectic geometers beginning with the celebrated work of Gromov \cite{Gromov85}. Since Chekanov's work on Lagrangian tori in \( \C^n \) (\cite{Chekanov-energy}) it has been known that (monotone)Lagrangian submanifolds also have an associated size, and there has since been considerable progress in obstructing Lagrangian isotopies of, say, a Clifford torus into too small a ball (\cite{CieliebakMohnke-Lagrangians}). Naively there is no contact analogue of the non-squeezing phenomenon, the unit ball in \( (\R^{2n-1},\xi_{std}) \) is contact isotopic to a ball of (e.g.) half the size, but Eliashberg-Kim-Polterovich discovered, \cite{EKP06}, that some remnant of symplectic embedding rigidity persists in the pre-quantization of a Liouville domain. The subject of the present article is a Legendrian analog of this story.

Let \( L \subset (\C^n,d\lambda) \) be a closed, embedded Lagrangian such that \( \LB\lambda\res{L}\RB \in H^1(L;\Z). \) Then \( L \) admits a Legendrian lift \( \Lambda \) to the pre-quantization space \( \widehat{\C^n} := (S^1 \times \C^n,\ker(dt + \lambda)), \) i.e. there exists a closed Legendrian \( \Lambda \subset \widehat{\C^n} \) whose projection into \( \C^n \) is a diffeomorphism onto \( L. \) The question is then as follows: Suppose \( L \) as above does not admit a Hamiltonian isotopy into a domain \( U \subset \C^n. \) Does a Legendrian lift \( \Lambda \) admit a Legendrian isotopy into the pre-quantization domain \( \widehat{U}. \) This was first addressed in dimension three by Dimitroglou Rizell--Sullivan in \cite{RizellSullivan-limits} via a local Thurston--Bennequin inequality, but remains untreated in high dimensions.

Our main theorem provides an answer for lifts of many high dimensional Lagrangians:
\begin{theorem}\label{thm:high-d-squeezing}
    Let \( A \in \N, \) \( L \subset \C^n \) be one of the following:
    \begin{itemize}
        \item The monotone twist torus obtained from a loop enclosing area \( A \) in \( \C, \) or the monotone Clifford torus \( T^2_{cl}(A,A) \subset \C^2. \)
        \item The Maslov-2 resolution of the Lagrangian Whitney sphere of monotonicity constant \( A. \)
        \item A non-monotone, integral Clifford torus \( T^n_{cl}(A_1,\ldots,A_n) \), with \( A_i > A_1 \), and \( A_i \in A_1\Z \) for all \( i > 1. \)
    \end{itemize}
    Then any Legendrian lift \( \Lambda \subset \widehat{\C^n} \) of \( L \) does not admit a Legendrian isotopy into the cylinder \( \widehat{Z}(A) := S^1 \times D(A) \times \C^{n-1}. \)
\end{theorem}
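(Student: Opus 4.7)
My plan is to argue by contradiction and split the argument along the two techniques promised in the abstract. Assume $ \Lambda $ admits a Legendrian isotopy into $ \widehat{Z}(A) $. The first move is to pull back along the $ \Z $-cover $ \R_t \times \C^n \to S^1 \times \C^n $; this lifts $ \Lambda $ to a $ \Z $-periodic exact Legendrian $ \tilde{\Lambda} $, and lifts the squeezing isotopy to a $ \Z $-equivariant Legendrian isotopy landing in $ \R_t \times Z(A) $. By the Legendrian isotopy extension theorem, I may further extend to a compactly supported ambient contact isotopy, so the entire problem becomes a $ \Z $-equivariant statement on the standard contactization.

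In the high-dimensional cases I would then invoke the microlocal sheaf machinery (Nadler--Shende, Kuo--Shende) to attach to $ \tilde{\Lambda} $ a category of microsheaves $ \mush_{\tilde{\Lambda}} $ on $ \R_t \times \C^n $ carrying a $ \Z $-action. Guillermou--Kashiwara--Schapira invariance ensures that the squeezing contact isotopy induces an equivalence between $ \mush_{\tilde{\Lambda}} $ and the microsheaf category of its image, compatibly with the $ \Z $-action; in particular this category embeds inside the $ \Z $-equivariant microsheaf category of $ \R_t \times Z(A) $. I would next produce in $ \mush_{\tilde{\Lambda}} $ a distinguished nonzero object reflecting the Lagrangian-Floer content of $ L $: a rank-one local system recording the monotonicity class for the monotone Clifford and twist tori, the analogue derived from the Floer class for the Whitney sphere resolution, and a Novikov-type local system compatible with the $ \Z $-period for non-monotone integral Clifford tori. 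The contradiction then follows by translating this object, via the microsheaf/Fukaya correspondence, into a nonzero Floer-theoretic class on $ L \subset \C^n $ that refuses to be displaced from $ Z(A) $, contradicting the classical Lagrangian non-squeezing results of Chekanov and Cieliebak--Mohnke.

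For the dimension-three twist torus case I would bypass microsheaves and analyze the front projection of $ \tilde{\Lambda} $ in $ \R_t \times \R_x $ directly, extracting a normal ruling polynomial as a $ \Z $-equivariant Legendrian isotopy invariant; monotonicity forces rulings with a controlled number of cusps, and a count bounded below by the enclosed area $ A_1 $ obstructs containment in $ \R_t \times D(A) $, recovering Dimitroglou Rizell--Sullivan. The main obstacle throughout is the second step of the high-dimensional argument: concretely producing the distinguished microsheaf object and then showing that its survival in the microsheaf category of $ \R_t \times Z(A) $ would force a known Lagrangian displacement obstruction to fail. Put another way, the essential technical content is the promotion of Cieliebak--Mohnke-type displacement bounds from the Fukaya category of $ L \subset \C^n $ to a $ \Z $-equivariant microsheaf statement on the contactization, where the $ \Z $-action encodes the area class that the squeezing is trying to conceal.
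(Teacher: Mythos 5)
There is a genuine gap, and it sits at the very end of your high-dimensional argument: you propose to finish by ``translating this object, via the microsheaf/Fukaya correspondence, into a nonzero Floer-theoretic class on $L \subset \C^n$ that refuses to be displaced from $Z(A)$, contradicting the classical Lagrangian non-squeezing results of Chekanov and Cieliebak--Mohnke.'' That reduction is not available. A Legendrian isotopy of $\Lambda$ inside $\widehat{\C^n}$ need not project to a Hamiltonian, or even Lagrangian, isotopy of $L$; the projection of the isotoped Legendrian can be immersed, non-graphical, or bear no relation to $L$ at all, so Cieliebak--Mohnke type displacement bounds for $L$ say nothing a priori about where $\Lambda$ may be Legendrian-isotoped. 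Indeed the whole content of the theorem is that Legendrian isotopies, which are strictly more general than lifts of Hamiltonian isotopies, nonetheless remember the area class; an argument that quietly assumes the opposite reduction is begging the question. Moreover the ``distinguished nonzero object'' of $\mush_{\tilde\Lambda}$ is never actually constructed, and it is not specified what $\Z$-equivariant invariant of $\R_t \times Z(A)$ its survival would violate.

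The paper's actual mechanism is different and avoids any appeal to Lagrangian Floer theory. It associates to $\Lambda \subset \widehat{\C^n}$ \emph{two} Legendrian-isotopy-invariant categories, $\sh^{\C}_{\Lambda}$ and $\sh^{\cyl}_{\Lambda}$, by filling the prequantized circle direction in the two natural ways (trivial open book page $\C$, versus cotangent bundle $T^*S^1$). The engine is \cref{thm:pushing-out}: if $\Lambda \subset \widehat{Z}(1-\epsilon)$ then a local-to-global double-stopping construction lifts every microlocal-rank-one object of the disk category to a cylindrical one, forcing $\#\sh^{\C,1}_{\Lambda} \le \#\sh^{\cyl,1}_{\Lambda}$. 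The theorem then follows by explicitly computing both sides for each $L$ on the list (unknot, $T^2_{cl}(1,1)$, and their bifurcation suspensions via \cref{thm:suspended-microsheaves}) and checking that the inequality fails; for instance $\sh^{\cyl,1}_{\Lambda}$ is empty for the lift of an integral unknot while $\sh^{\C,1}_{\Lambda}$ is not. The $\Z$-cover enters only at the very end, to pass from $A=1$ to general $A\in\N$. Similarly in dimension three the paper does not extract a $\Z$-equivariant ruling polynomial on the universal cover; instead it defines two ruling-type counts directly on the cylinder front (disk rulings and circular rulings) and proves $\#\mathrm{R}^{\C}(\Lambda) \le \#\mathrm{R}^{\cyl}(\Lambda)$ under the smallness hypothesis. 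That two-invariant comparison, rather than a reduction to the Lagrangian base, is the idea your proposal is missing.
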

Monotone twist tori are exactly those which arise from iterated application of Chekanov's twisting construction (\cite{Chekanov-tori,Chekanov-Schlenk}), for more discussion see \cref{sec:twist-torus-fronts}.

In dimension three, this recovers some special cases of the Legendrian non-squeezing result of \cite{RizellSullivan-limits}; namely when the Legendrian considered is a lift of an embedded circle bounding a disk of integer area. This low dimensional case can be handled by elementary methods, which serve as an instructive warm-up for the techniques necessary in the general setting. 

The squeezing obstruction in this setting is provided by an inequality between the counts of two types of \emph{normal ruling} (in the sense of \cite{ChekanovPushkar05}) one can define for Legendrians in \( \widehat{T^*\R} \) (with front projection on a cylinder). One is defined simply as a pseudo-involution on the front, respecting certain conditions at crossings and cusps, while the other also records a preferred path between paired strands, in exchange for a slight relaxation of the crossing condition. When a Legendrian is sufficiently squeezable, every ruling of the first type induces one of the second, providing the aforementioned inequality, and thus providing a squeezing obstruction, once one verifies that the counts are Legendrian isotopy invariants. This approach is the content of \cref{sec:cylindrical-rulings}.

Beyond dimension three, \cref{thm:high-d-squeezing} is an application of a general squeezing obstruction for a Legendrian submanifold \( \Lambda \subset \widehat{\C^n} \) coming from the category of microsheaves supported on \( \Lambda: \) Such a Legendrian gives rise to an open cylindrical Lagrangian \( S\Lambda \) in the symplectization. In the pre-quantization setting, there are two natural completions of this cylinder to (singular) isotropic cones, \( S\Lambda^{\C} \) and \( S\Lambda^{\cyl}, \) obtained, respectively, by collapsing to negative end on to a point, and on to a copy of the \( S^1 \) factor of the pre-quantization. To such a cone, and a choice of field \( \kk \) the machinery of \cite{NadlerShende-weinstein} associates an \( \kk \) linear dg-category, which is an invariant of the Legendrian isotopy type (in \( \widehat{\C^n} \)) of \( \Lambda. \) We denote these by \( \sh^{\C}_{\Lambda} \) and \( \sh^{\cyl}_{\Lambda} \) respectively.

These dg-categories have distinguished full sub-categories, denoted \( \sh^{\bullet,1}_{\Lambda} \) (\( \bullet \in \set*{\C,\cyl} \)), whose objects are the microsheaves with microstalks of rank 1. In favorable circumstances, for example, when \( \Lambda \) is closed, smooth, and embedded, and working over a finite field \( \F_q, \) the homology category \( H\sh^{\bullet,1}_{\Lambda} \) has finitely many isomorphism classes of objects, and so one can associate a count of objects which is a Legendrian isotopy invariant. We denote this number by \( \#\sh^{\bullet,1}_{\Lambda}. \) The desired squeezing obstruction then comes from an inequality in these counts for Legendrians which are geometrically small:
\begin{theorem}\label{thm:pushing-out}
Suppose \( \Lambda \subset \widehat{\C}^n \) is a smooth, embedded, Legendrian contained in \( \widehat{Z}(1-\epsilon) \). Then
\[ \#\sh^{\C,1}_{\Lambda} \leq \#\sh^{\cyl,1}_{\Lambda}. \]
\end{theorem}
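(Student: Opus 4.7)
The strategy is to construct a functor $\Phi: \sh^{\cyl,1}_\Lambda \to \sh^{\C,1}_\Lambda$ which is essentially surjective on isomorphism classes; since each iso class in $\sh^{\C,1}_\Lambda$ then has at least one preimage in $\sh^{\cyl,1}_\Lambda$, the inequality $\#\sh^{\C,1}_\Lambda \leq \#\sh^{\cyl,1}_\Lambda$ follows by counting.

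Geometrically, $S\Lambda^{\C}$ and $S\Lambda^{\cyl}$ agree on the positive end of the symplectization (the cylinder $\R_{>0}\times\Lambda$) and differ only in their negative-end collapse --- to a single point versus to the Reeb $S^1$. There is a natural collapse map $S\Lambda^{\cyl}\to S\Lambda^{\C}$ folding the $S^1$ to a point, which in the Nadler--Shende framework should produce the desired functor $\Phi$. Microstalks are computed at points on the positive end of the cone, where the two completions coincide identically, so $\Phi$ preserves microstalk rank and restricts to the rank-1 subcategories as required.

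The heart of the argument is essential surjectivity: given $F \in \sh^{\C,1}_\Lambda$, one must construct a lift $\widetilde{F} \in \sh^{\cyl,1}_\Lambda$ with $\Phi(\widetilde{F}) \simeq F$. This is where the smallness hypothesis enters. Since $\Lambda$ is disjoint from the annular region $\{z\in\C : \pi\abs{z}^2 \in [1-\epsilon,1]\}\times\C^{n-1}$ in the base of the pre-quantization, there is a compactly supported contact Hamiltonian on the corresponding annular piece of $\widehat{\C^n}$ that is fully disjoint from $\Lambda$. I would use this Hamiltonian to construct a local interpolation between the negative-end cone point of $S\Lambda^{\C}$ and the negative-end $S^1$ of $S\Lambda^{\cyl}$, and then propagate $F$ along this interpolation to obtain $\widetilde{F}$ as a local extension across the cone point. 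The crucial geometric input is that the interpolation can be done entirely in the annular free region, so the positive ends of the cones are never disturbed and no new microsupport is created there.

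The main obstacle will be making this local resolution step precise within the Nadler--Shende framework. Specifically, one must verify that $\widetilde{F}$ has microsupport exactly on $S\Lambda^{\cyl}$ (rather than something strictly larger that would take it outside $\sh^{\cyl}_\Lambda$), and that the microstalk of $\widetilde{F}$ remains of rank 1 so that it lies in the distinguished subcategory $\sh^{\cyl,1}_\Lambda$. These are questions about the local structure of microsheaves near a cone-point-versus-cone-circle model, and I expect them to be handled via careful choice of the compactly supported Hamiltonian together with the contact isotopy invariance of the microsheaf categories built into \cite{Nadler-Shende}.
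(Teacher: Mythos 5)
Your overall intuition---that the smallness hypothesis gives you a ``free annular region'' in the base, and that this freedom should let you promote disk objects to cylinder objects---is the right one, and matches the geometric mechanism the paper exploits. But the framing and the actual argument have significant gaps.

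First, you posit a global ``collapse'' functor $\Phi:\sh^{\cyl,1}_\Lambda \to \sh^{\C,1}_\Lambda$ induced by folding the $S^1$ at the negative end of $S\Lambda^{\cyl}$ to the cone point of $S\Lambda^{\C}$. The paper does not construct such a functor, and it is not clear that one exists: a map of singular isotropic cones does not induce a pushforward on the associated microsheaf categories in the Nadler--Shende framework (microsheaves are not sheaves on the skeleton in a naive sense, and the collapse is not a Weinstein cobordism, a stop removal, or any of the other standard category-inducing operations). What the paper does have is a \emph{local} and \emph{non-canonical} fully faithful embedding $\sh^{\C}_\Lambda \hookrightarrow \sh^{\R}_{\Lambda,0}\hookrightarrow \sh^{\cyl}_\Lambda$ given by ``double stop insertion'' (\cref{prop:double-stopped-disks}), going in the opposite direction from your $\Phi$, and depending on a choice of cut. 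The entire difficulty is that these choices must be made coherently over the bifurcation space.

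Second, and more seriously, the step you call ``propagate $F$ along this interpolation to obtain $\widetilde{F}$'' is precisely where the paper does all its work, and your sketch does not engage with any of it. Even granting that disk and cylinder sheaves look alike locally over smooth strata (both reduce to bar decompositions, \cref{thm:disk-pairings} and \cref{cor:circular-decomposition}), a given disk object has \emph{many} local lifts to a cylinder object --- one for each isotopy class of double stop --- and nothing about a ``compactly supported contact Hamiltonian in the annular region'' forces these to glue. What the paper establishes is a \emph{uniqueness} mechanism: it introduces the length spectrum $l_\mathcal{F}$ (\cref{prop:bar-length}) and shows that $\Lambda\subset\widehat{Z}(1-\epsilon)$ forces every object of $\sh^{\C,1}_\Lambda$ to be $\epsilon$-short (\cref{lem:small-means-small}); among the many double-stopped lifts of a short object, exactly one is again short (\cref{prop:short-singular-lift}, \cref{cor:short-bar-lift}), and this uniqueness is what guarantees the local lifts agree on overlaps. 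The gluing itself is then carried out in the $\infty$-categorical language (via iso-comma categories and coherent splittings, \cref{sec:essential-rulings}) to promote the isomorphism-class agreements to homotopy-coherent descent data, and a separate argument handles the XX-type bifurcation strata (\cref{prop:XX-extension}) where one must propagate across a nontrivial front isotopy. Your proposal names none of these: the length function, the shortness criterion, the local uniqueness of the short lift, or the descent machinery. Without a mechanism forcing the local lifts to agree, there is no well-defined $\widetilde{F}$, regardless of whether $\Phi$ exists.

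So the high-level idea is in the right direction, but as written the proof would not go through: the asserted functor $\Phi$ is not justified, and the lifting step that you correctly identify as the heart is not actually carried out --- and the specific obstruction (non-uniqueness of local lifts) is exactly what the paper's length-spectrum and short-lift machinery is designed to overcome.
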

\begin{remark}
    There are many ways of trying to count objects in a dg-category which account for more or less structure (see, e.g., \cite{GorskyHaiden24}). For our present, geometric, purposes one may safely think of the above inequality in the most na\"ive sense i.e. a literal count of isomorphism classes. On the other hand we expect that the proof of \cref{thm:pushing-out} can be refined to give a more precise comparison of objects, which would yields the same inequality when the na\"ive count is replaced with an orbifold count, or the homotopy cardinality. 
\end{remark}
The proof of \cref{thm:pushing-out} proceeds via a local-to-global lifting construction, which builds an object of \( \sh^{\cyl,1}_{\Lambda} \) out of an object of \( \sh^{\C,1}_{\Lambda}. \) The geometric assumption \( \Lambda \subset \widehat{Z}(1-\epsilon) \) is translated into a sort of ``locality'' (in the sense of the front projection) for objects of \( \sh^{\C,1}_{\Lambda}, \) which in turn is used to guarantee that a collection of local lifts can be (canonically) glued into a global object of \( \sh^{\cyl,1}_{\Lambda}. \) This occupies the entirety of \cref{sec:essential-rulings}.


\subsection{Outline of the paper}\label{sec:outline}
In \cref{sec:geom-prelim} we review some basics of contact geometry, and collect a few facts about fronts of generic Legendrians. \cref{sec:cylindrical-rulings} is devoted to a (brief) proof of three dimensional Legendrian non-squeezing using our approach, intended to serve as a summary of the strategy unburdened by the additional technicality of the microsheaf formalism. In the process we introduce some new variants of rulings for Legendrian knots beyond the \( \R \) valued jet.

\cref{sec:preliminaries} is devoted to collecting algebraic and categorical preliminaries for our discussion of microsheaves, which are introduced in \cref{sec:microsheaves}. In \cref{sec:sheaf-invariants} and \cref{sec:combinatorics} we introduce the relevant forms of microsheaves that we consider in order to approach Legendrian non-squeezing in our setting, and collect some local computations useful for getting a handle on objects. In \cref{sec:essential-rulings} we prove \cref{thm:pushing-out} as sketched above. Finally, in \cref{sec:non-squeezing} we use the results of the previous section to prove our main result, \cref{thm:high-d-squeezing}.

\bf Acknowledgments \rm The author would like to thank Yasha Eliashberg for introducing him to the work of Chekanov--Pushkar, and many helpful discussions, and David Nadler for suggesting the use of microsheaves to go beyond dimension 3. The author also thanks Maya Sankar for showing him how to use Tikz to generate most of the figures that appear in this paper.

\section{Geometric preliminaries}\label{sec:geom-prelim}
Throughout, we identify the circle with the quotient of \( \R \) by \( \Z \): \( S^1 \!:= \R/\Z. \) In particular, we treat it as having length 1.

\subsection{Recollections on pre-quantizations}\label{sec:pre-quantizations}
Let \( (W,d\lambda) \) be a Liouville manifold. Its \emph{pre-quantization} is the contact manifold 
\[ \widehat{W} := (S^1 \times W, \ker(dt + \lambda)). \]
It carries a standard contact form \( \alpha = dt + \lambda \) whose Reeb flow is rotation in the \( S^1 \) factor, and is equipped with a canonical projection \( \varpi:\widehat{W} \rightarrow W \) which forgets the \( S^1 \) factor.

If \( \iota:L \rightarrow W \) is a smooth Lagrangian embedding, such that \( \iota^*\lambda \in H^1(L;\Z), \) then for any choice of \( l_0 \in L \) the map 
\[ \tilde\iota:L \rightarrow \widehat{W} \qquad \tilde\iota(l) = \Lx \int_{l_0}^{l} \lambda, \iota(l) \Rx \]
is a smooth Legendrian embedding such that \( \varpi \circ \tilde\iota = \iota. \) In this case, we say that \( \Lambda := \tilde\iota(L) \) is a Legendrian \emph{lift} of \( L. \) Note that lifts of such \( L \) are unique up to Legendrian isotopy: all are related by some (global) rotation of the \( S^1 \) factor, which is a Reeb flow as described above. Moreover, if \( L' \) is Hamiltonian isotopic to \( L, \) then any lift \( \Lambda' \) of \( L' \) is Legendrian isotopic to a (thus any) lift \( \Lambda \) of \( L. \)

In the case \( W = T^*M, \) a cotangent bundle equipped with its tautological 1-form, there is a canonical contactomorphism 
\[ \widehat{W} \cong J^1(M;S^1) \]
between the pre-quantization of \( W, \) and the circle-valued 1-jet space of \( M. \)

We will habitually identify \( \widehat{\C}^n \) with \( \widehat{T^*\R^n} \cong J^1(\R^n;S^1). \) This is justified by an application of Gray's theorem: Any Legendrian isotopy of a closed Legendrian can be realized by a compactly supported contactomorphism. The two contact forms induced by Liouville homotopic Liouville forms are isotopic through contact forms, and thus yield contactomorphic contact manifolds on any compact subset.

\subsection{Legendrian fibrations}\label{sec:legendrian-fibrations}
Let \( (Y,\xi) \) be a contact manifold. A Legendrian fibration is a smooth submersion \( p:Y \rightarrow F \) whose fibers are Legendrian submanifolds. We will refer to the base \( F \) as the \emph{front space} and the map \( p \) as the \emph{front projection}. When \( Y \) is a 1-jet, \( Y = J^1(X;Z) \) for some \( Z \in \set*{\R,\S^1}, \) and \( p:Y \rightarrow X \times Z \) is the standard Legendrian fibration by cotangent fibers, there is a further projection \( \Pi:Y \rightarrow \mathfrak{B} := X. \) We refer to this as the \emph{bifurcation projection}, and to the base \( \mathfrak{B} \) as the \emph{bifurcation space}. The intermediate projection from the front to bifurcation space is denoted \( \pi: F \rightarrow \mathfrak{B} \). While fixing further notation (as in the rest of this section) we will be careful to distinguish between \( \Lambda \) and its image under the front or bifurcation projection, but later on we may conflate these, and denote all by \( \Lambda \) when it is clear where we wish to view it from context.

\subsection{Stratification of front singularities}\label{sec:singularity-stratification}
A subspace \( A \subset X \) is called \textit{smoothly stratified} if it is equipped with a filtration \( A_0 \subset A_1 \subset \ldots \subset A_N = A \) such that \( A_i \setminus A_{i-1} \) is a smooth manifold for every \( i. \) 

Let \( Y = J^1(X;Z) \) be a standard Legendrian fibered jet space, \( p,F,\Pi,\mathfrak{B} \) as in \cref{sec:legendrian-fibrations}. The \textit{frontal singular locus} \( \tilde{\mathbb{X}}_{\Lambda} \) of \( p(\Lambda) \) is defined to be the set of points \( x \in F \) where either \( Dp \) drops rank, or over which \( p \) fails to be injective. By standard jet transversality, and the abundance of Legendrian perturbations, \( \tilde{\mathbb{X}}_{\Lambda} \) is a smoothly stratified, codimension at least 2 subspace of \( F, \) for generic \( \Lambda. \) Observe that \( \tilde{\mathbb{X}}_{\Lambda} \) is \textit{automatically} transverse (in the sense of smoothly stratified spaces) to the fibers of \( \pi, \) since \( \Pi(\Lambda) \) is on the complement of \( \tilde{\mathbb{X}}_{\Lambda} \). 

The \textit{bifurcating locus}, \( \mathbb{X}_{\Lambda}, \) of \( \Lambda \) is defined to be \( \pi(\tilde{\mathbb{X}}). \) Keeping in mind the automatic transversality of \( \tilde{\mathbb{X}}_{\Lambda}, \) we conclude that, after further generic perturbation, \( \mathbb{X}_{\Lambda} \) is also smooth, stratified, and now codimension 1. Note that this stratification is not quite the same as that on \( \tilde{\mathbb{X}}_{\Lambda}, \) but is induced by the projection of a refinement of this stratification, where the additional strata are precisely the pre-images of the subset where \( \pi\res{\tilde{\mathbb{X}}_{\Lambda}} \) fails to be injective, which are of positive codimension in \( \mathbb{X}_{\Lambda} \). Compatibility of these stratifications induces a partition into the \textit{pointlike}, and \textit{XX} loci, the latter defined to be the sub-stratified sub-space of \( \mathbb{X}_{\Lambda} \) over which \( \pi \) is not injective, with the former comprising the rest.

The following lemma will be useful later:
\begin{lemma}\label{lem:stratum-local-constancy}
    Fix a smooth stratum \( \mathfrak{x} \subset \mathbb{X}_{\Lambda}. \) Then over a sufficiently small (without loss of generality tubular) neighborhood \( U \ni x \in \mathfrak{x} \) the front \( \Lambda \) is isotopic to the trace of a \( \codim(\mathfrak{x}) \)-parametric family of Legendrian isotopies of the coisotropic reduction of its intersection with (the pre-image of) a local transverse slice (of complementary dimension) to \( \mathfrak{x}. \)
\end{lemma}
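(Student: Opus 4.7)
The plan is to build, via a small compactly supported ambient contact isotopy, a product normal form for $\Lambda$ over a tubular neighborhood of $x$ in which the only non-trivial variation of $\Lambda$ occurs transverse to $\mathfrak{x}$. First, since $\mathfrak{x}$ is a smooth stratum of the smoothly stratified bifurcating locus, I would choose coordinates on $\mathfrak{B} = \R^n$ near $x$ for which $\mathfrak{x}$ appears as a coordinate subspace, and a tubular neighborhood splits as $U \cong V \times T$, with $V$ a coordinate patch of $\mathfrak{x}$ and $T$ a transverse slice of complementary dimension. After shrinking $U$, the transversality of $\tilde{\mathbb{X}}_{\Lambda}$ to the fibers of $\pi$ recorded just above the lemma, together with the smoothness of $\mathfrak{x}$, lets one arrange every substratum of $\mathbb{X}_{\Lambda}$ meeting $U$ to have the product form $V \times S$ for some stratum $S \subset T$.

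Next, I would lift the coordinate vector fields on $V$ by jet prolongation to commuting contact vector fields on $Y = J^1(\R^n;S^1)$, each $\Pi$-related to its base version. Their joint flow defines a local $V$-action on $\Pi^{-1}(U)$ by $\Pi$-preserving contactomorphisms. The central step is to arrange, after an arbitrarily small compactly supported contact perturbation of $\Lambda$ licensed by genericity, that this flow preserves $\Lambda \cap \Pi^{-1}(U)$ together with each stratum of its frontal singular locus. The ambient contactomorphism so produced will carry $\Lambda \cap \Pi^{-1}(U)$ to a $V$-invariant Legendrian.

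Finally, coisotropic reduction of $\Lambda \cap \Pi^{-1}(\{v\} \times T)$ delivers, for each $v \in V$, a Legendrian $\Lambda_v \subset J^1(T;S^1)$, depending smoothly on $v$ and with mutually Legendrian-isotopic members. After the straightening, $\Lambda \cap \Pi^{-1}(U)$ is exactly the trace of this family, which is the claimed normal form.

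The hard part will be the stratified straightening in the second step: ensuring that a single contact isotopy preserves every stratum of $\tilde{\mathbb{X}}_{\Lambda}$ simultaneously, rather than only the top one sitting over $\mathfrak{x}$. This is essentially a Legendrian analog of Thom's first isotopy lemma for stratified submersions, and I expect it to rely both on the genericity built into $\Lambda$ and on the automatic transversality of $\tilde{\mathbb{X}}_{\Lambda}$ to the $\pi$-fibers established in the setup before the lemma.
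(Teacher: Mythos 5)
The paper's proof is a single sentence (``This is a consequence of transversality of our slice, and smoothness of the underlying Legendrian''), so the statement is meant to be essentially tautological once the product structure of a tubular neighborhood is in place. Your proposal has the right scaffolding (choose coordinates so that a tubular neighborhood of $x$ splits as $V \times T$, reduce along the transverse slice $T$, compare the slices over $V$), but it inserts a step that is not correct as stated, and that is in any case unnecessary.

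The problematic step is the claim that ``after an arbitrarily small compactly supported contact perturbation of $\Lambda$ licensed by genericity'' the lifted $V$-flow preserves $\Lambda \cap \Pi^{-1}(U)$. Flow-invariance under the jet prolongation of a $V$-action is a macroscopic normal form, not a generic condition: if, say, the angular ($S^1$) coordinates of the sheets of $p(\Lambda)$ near a crossing drift nontrivially as one moves along $\mathfrak{x}$, no $C^\infty$-small perturbation makes $\Lambda$ invariant under the prolonged flow --- the drift would have to be removed by a genuinely nontrivial (compactly supported but not small) straightening contactomorphism. What genericity buys you is the transversality/Whitney-type conditions that let the Thom isotopy lemma \emph{produce} such a straightening; it does not let you avoid it. Your proposal also conflates these in the next sentence (``the ambient contactomorphism so produced will carry $\Lambda \cap \Pi^{-1}(U)$ to a $V$-invariant Legendrian''), since if the small perturbation already achieved invariance there would be no further contactomorphism to produce. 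Moreover, a full straightening to a $V$-invariant product is strictly stronger than what the lemma claims, which only asks for the \emph{trace} of a parametric family of isotopies, not a constant family.

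The more direct route, and the one the paper's terse proof points at, skips the straightening entirely. Fix the splitting $U \cong V \times T$. By transversality of the slices $\{v\} \times T$ to $\mathbb{X}_\Lambda$ (recorded just above the lemma) and smoothness of $\Lambda$, the coisotropic reductions $\Lambda_v := \Lambda \cap \Pi^{-1}(\{v\} \times T)$ (taken in $J^1(T;S^1)$) form a smooth family of Legendrians indexed by $v \in V$. Since $V$ is contractible, Gray stability/Legendrian isotopy extension exhibits $\{\Lambda_v\}$ as a family of Legendrian isotopies of $\Lambda_{v_0}$. Over $U$, the front of $\Lambda$ is then \emph{by definition} the trace of this family, up to the diffeomorphism identifying $U$ with $V \times T$ --- which is the only sense in which an ambient isotopy is needed. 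So the local model holds without any perturbation of $\Lambda$ and without invoking a Legendrian Thom lemma in full strength.
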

\begin{proof}
    This is a consequence of transversality of our slice, and smoothness of the underlying Legendrian.
\end{proof}

\subsection{Polarizations and twisted Maslov potentials}\label{sec:polarizations}
We discuss here the geometric aspects of a polarization for a contact manifold, and corresponding notion of (twisted) Maslov potentials. We use the word \emph{twisted} to emphasize that we allow ourselves work with respect to a non-standard polarization. We will revisit these objects in greater detail (and generality) in \cref{sec:sheaf-polarizations}.

A (co-oriented) contact manifold \( (Y^{2n+1},\xi) \) carries a canonical conformal symplectic structure on the contact distribution \( \xi \subset TY. \) In particular, \( \xi \) can be regarded as a rank \( 2n \) real symplectic vector bundle over \( Y. \) To such a vector bundle there are two associated fiber bundles, \( \lgr(Y), \lgr^+(Y), \) with fibers \( \lgr_n \) and \( \lgr^+_n, \) respectively the ordinary and oriented Lagrangian Grassmannians of the symplectic vector space of real dimension \( 2n \). The latter double covers the former by construction, and so induces a double covering on the level of the Lagrangian--Grassmann bundles. A \emph{polarization} (resp. \emph{orientable polarization}) of \( Y \) consists of a section of \( \lgr(Y) \) (\( \lgr^+(Y) \)), i.e. a smoothly varying choice of (oriented) Lagrangian subspace of \( \xi \) over each point of \( Y. \) When \( (Y,\xi) \) admits an (orientable) polarization it is said to be \emph{(orientably) polarizable}.

\begin{remark}
    A (orientable) polarization has the following bundle theoretic interpretation: A symplectic vector bundle \( E \rightarrow B \) of rank \( 2n \) is, equivalent to the data of a \( U(n) \) principal bundle. Then a (orientable) polarization \( \tau:B \rightarrow \lgr^{(+)}(E) \) induces a trivialization of the associated Lagrangian--Grassmann bundle
    \[ \lgr(E) \cong B \times U(n)/O(n) \quad (\mathrm{resp. } \lgr^+(E) \cong B \times U(n)/SO(n)) \]
    by the action on the distinguished Lagrangian subspaces determined by \( \tau. \) Let \( \sigma:\lgr(E) \rightarrow U(n)/O(n) \) denote the projection of the trivializing map onto the second factor. We then obtain a ``global'' Maslov index on \( E \) via the composition
    \[ \mu^{\tau}_E := \left(\sigma^* \circ \iota\right)(\mu), \]
    where \( \mu \in H^1(U/O) \) is the universal Maslov class, and \( \iota:U(n)/O(n) \rightarrow U/O \) is induced by stabilization. In particular, when \( B \) is a contact (or symplectic) manifold, and \( E \) is its contact distribution (tangent bundle), a (oriented) polarization induces a class on the total space of the associated (oriented) Lagrangian--Grassmann bundle whose fiberwise restriction is exactly the Maslov class of the fiber. 
\end{remark}

Now suppose \( (Y,\xi) \) is polarizable, and fix a polarization \( \tau:Y\rightarrow \lgr(Y). \) A Legendrian submanifold \( \Lambda \subset Y \) comes with a canonical \emph{Gauss map}, \( G_{\Lambda}:\Lambda \rightarrow \lgr(Y) \) given by its tangent space. If \( \Lambda \) is orientable, then a choice of orientation yields a lift of \( G_{\Lambda} \) to \( \lgr^+(Y). \) In this case, fixing a basepoint \( x_0 \in \Lambda \) (assume, without loss of generality, that \( G_{\Lambda}(x_0) \pitchfork \tau(x_0) \)) we obtain a map \( \pi_1(\Lambda,x_0) \rightarrow \Z, \) defined by evaluating \( \mu_{\xi} \) on the image of \( (G_{\Lambda})_{*}\pi(\Lambda,x_0) \) under the Hurewicz map. Since \( G_{\Lambda} \) admits a lift to the double cover, the image of this map lands in \( 2\Z, \) and so can be characterized completely by an even integer \( n^{\tau}_{\Lambda}, \) the \emph{Maslov number} of \( \Lambda, \) which generates this image. 

Now if \( x \in \Lambda \) is any other point, lifting \( \mu_{\xi} \) in the long exact sequence of the pair \( (\lgr(\Lambda),G_{\Lambda}(x_0) \cup G_{\Lambda}(x)) \) over \( \Z/n^{\tau}_{\Lambda}\Z, \) and evaluating on any class in \( H_1(\Lambda,x_0\cup x;\Z) \) induced by a path \( x_0 \mapsto x \) yields a well defined element of \( \Z/n^{\tau}_{\Lambda}\Z. \) We define the \emph{Maslov potential twisted by \( \tau \) based at \( x_0 \)} on \( \Lambda \) to be the \( \Z/n^{\tau}_{\Lambda}\Z \) valued function on \( \Lambda \) obtained via this procedure, and denote it by \( m_{(\Lambda,x_0)}^{\tau} \). Note that, by Arnol'd's geometric characterization of the universal Maslov class (\cite{Arnold-maslov}) \( m_{(\Lambda,x_0)}^{\tau} \) is locally constant over the region where \( G_{\Lambda} \pitchfork \tau \) (where the images are regarded as Lagrangian subspaces). We leave as an exercise to verify that when \( Y = J^1(\R;\R), \) \( \tau \) the polarization induced by cotangent fibers, and \( \Lambda \subset Y \) some front-generic Legendrian knot, \( n^{\tau}_{\Lambda} \) recovers (up to sign, twice the) the classical rotation number of \( \Lambda, \) and our notion of twisted Maslov potential reduces to the usual notion as defined in, say, \cite{ChekanovPushkar05}.

Note that, in the end, all relevant objects were pulled back from the stable Lagrangian--Grassmannian, so in the above description we could instead have asked only for a \emph{stable polarization}, i.e. a section of the \( U/O \) bundle obtained by the limit of stable trivializations of \( \xi. \) Similarly, a Legendrian carries a \emph{stable Gauss map} obtained by stabilizing \( G_{\Lambda}, \) and the remainder of the story is as before. Note that, in practice, one can always truncate at a finite stage in the limit defining \( U/O, \) so the above ``local constancy on the transverse locus'' can still be made sense of directly. Indeed, we will use the fact that any stable polarization is already defined after stabilizing by a finite rank trivial bundle in our definition of microsheaves later on.

\subsection{Dimension 3}\label{dim-3-prelim}
In dimension 3 the bifurcation stratification can always be made very simple: generically \( \mathbb{X}_{\Lambda} \subset \mathfrak{B} \) is a finite collection of points, each corresponding to a transverse crossing of two smooth strands, or a fold singularity. Moreover a generic Legendrian isotopy features only the following three pointlike codimension 2 front singularities, which are captured by Legendrian Reidemeister moves:
\begin{itemize}
    \item \textbf{R1}, the swallowtail singularity
    \item \textbf{R2}, the intersection (in the front) of a fold with a smooth point
    \item \textbf{R3}, a triple intersection of smooth strands.
\end{itemize}
The bifurcation singularities consist of these, alongside the coincidence (i.e. occurring in the same fiber of \( \pi \)) of two generic front singularities. These describe the XX locus, and consist of:
\begin{itemize}
    \item \( \mathfrak{XX} \), the coincidence of two simple crossings.
    \item \( \mathfrak{XC} \), the coincidence of a simple crossing and a fold.
    \item \( \mathfrak{CC} \), the coincidence of two folds.
\end{itemize}
Since these are all the singularities which occur in a generic 1-parameter family, in order to check Legendrian isotopy invariance of an object defined in terms of a front, it suffices to verify that it persists under ambient fibered isotopy, and these six bifurcations. This is the strategy employed by Chekanov--Pushkar to show invariance of the count of normal rulings (i.e. positive, Maslov, pseudo-involutions) \cite{ChekanovPushkar05}. This will also be our strategy in \cref{sec:cylindrical-rulings}. 

\begin{figure}[h]
\begin{center}
\begin{tabularx}{0.95\textwidth}{|>{\centering}X|>{\centering}X|}
\hline
\vspace{1\baselineskip}
    \begin{tikzpicture}
        \draw (0,.25) -- (2,.25);
        \draw[->] (2.5,.25) -- (4,.25);
        \draw (4.5,0) to[out=0,in=-135] (5.5,.25) to[out=45,in=180] (6,.5);
        \draw (5,.5) to[out=0,in=180] (5.5,.75) to[out=0,in=180] (6,.5);
        \draw (5,.5) to[out=0,in=-45] (5.5,.25) to[out=-45,in=180] (6.5,0);
    \end{tikzpicture}
    \par \textbf{R1} &
    \vspace{0.1\baselineskip}\begin{tikzpicture}
        \draw (0,0.1) to[out=0,in=225] (1,.5) to[out=45,in=180] (2,0.9);
        \draw (0,0.9) to[out=0,in=135] (1,.5) to[out=-45,in=180] (2,0.1);

        \draw (0,-.9) to[out=0,in=225] (1,-.5) to[out=45,in=180] (2,-.1);
        \draw (0,-.1) to[out=0,in=135] (1,-.5) to[out=-45,in=180] (2,-.9);

        \draw[dashed] (1,1) -- (1,-1);
    \end{tikzpicture}
    \par \( \mathfrak{XX} \)
    \tabularnewline
\hline
\vspace{1\baselineskip}
        \begin{tikzpicture}
            \draw (0,.5) to[out=-45,in=180] (1,0);
            \draw (0,-.5) to[out=45,in=180] (1,0);
            \draw (0.5,.75) -- (1.25,0) -- (2,-.75);

            \draw[->] (2,0) to (4,0);

            \draw (4.5,.5) to[out=-45,in=180] (5.5,0);
            \draw (4.5,-.5) to[out=45,in=180] (5.5,0);
            \draw (4.5,.75) -- (5.25,0) -- (6,-.75);
        \end{tikzpicture}
        \par \textbf{R2} &
        \vspace{.1\baselineskip}\begin{tikzpicture}
            \draw (0,0.2) to[out=0,in=225] (1,.6) to[out=45,in=180] (2,1);
            \draw (0,1) to[out=0,in=135] (1,.6) to[out=-45,in=180] (2,0.2);

            \draw (2,0) to[out=-135,in=0] (1,-.5);
            \draw (2,-1) to[out=135,in=0] (1,-.5);
            
            \draw[dashed] (1,1) -- (1,-1);
        \end{tikzpicture}
        \par \( \mathfrak{XC} \)
        \tabularnewline
\hline
\vspace{1\baselineskip}
    \begin{tikzpicture}
        \draw (0,0) to[out=0,in=225] (1,.33) to[out=45,in=180] (2,1);
        \draw (0,1) to[out=0,in=135] (1,.33) to[out=-45,in=180] (2,0);
        \draw (0,.67) -- (2,.67);

        \draw[->] (2.5,.5) -- (4,.5);

        \draw (4.5,0) to[out=0,in=225] (5.5,.67) to[out=45,in=180] (6.5,1);
        \draw (4.5,1) to[out=0,in=135] (5.5,.67) to[out=-45,in=180] (6.5,0);
        \draw (4.5,.33) -- (6.5,.33);
    \end{tikzpicture}
    \par \textbf{R3} &
    \vspace{.1\baselineskip}\begin{tikzpicture}
        \draw (0,1.5) to[out=-45,in=180] (1,1);
        \draw (0,.5) to[out=45,in=180] (1,1);

        \draw (2,.5) to[out=-135,in=0] (1,0);
        \draw (2,-.5) to[out=135,in=0] (1,0);

        \draw[dashed] (1,1.5) -- (1,-.5);
    \end{tikzpicture}
    \par \( \mathfrak{CC} \)
    \tabularnewline
    \hline
\end{tabularx}
\caption{Some fronts for the generic three dimensional bifurcations.}
\end{center}
\end{figure}

\section{Ruling fronts in \( S^1 \times \R \)}\label{sec:cylindrical-rulings}
In this section we prove a Legendrian non-squeezing result in dimension three using more elementary, combinatorial invariants defined in terms of the front projection of a Legendrian link. The geometric idea of the proof is, ultimately, the same as in high dimensions, so we include this approach as a warm up which is uncomplicated by the more technical language necessary to work with microsheaves.

\subsection{Fronts in \(J^1(\R;\R)\)}
We begin with a review of rulings for Legendrians in \( J^1(\R;\R), \) i.e. upwardly co-oriented fronts in \( \R \times \R. \) Let \( \Lambda \subset J^1(\R;\R) \) be a front-generic Legendrian. Then, as described in \cref{dim-3-prelim}, \( \tilde{\mathbb{X}}_{\Lambda} \) is discrete, finite, and partitioned into \( \Lambda_{\mathfrak{X}} \sqcup \Lambda_{\mathfrak{C}}, \) the crossing and fold loci respectively. Denote the complement of \( \tilde{\mathbb{X}}_{\Lambda} \) in \( p(\Lambda) \) (i.e. the smooth points of the front) by \( \Lambda_{\mathfrak{S}}. \)

Now over each \( x \in \mathfrak{x} \) the fiber of the bifurcation projection \( \pi^{-1}(x) \) is a disjoint union of points.
\begin{definition}[\cite{ChekanovPushkar05}]\label{def:pseudoinvolution}
    A weak pseudo-involution on the front \( p(\Lambda) \) is a continuous map \( \iota:\Lambda_{\mathfrak{S}} \cup \Lambda_{\mathfrak{C}} \rightarrow p(\Lambda) \) such that
    \begin{enumerate}
        \item[(i)] \( \pi = \pi \circ \iota. \)
        \item[(ii)] \( \iota \circ \iota(x) = x \) whenever \( \iota(x) \in \Lambda_{\mathfrak{S}} \cup \Lambda_{\mathfrak{C}}. \) 
        \item[(iii)] \( \iota(x) = x \iff x \in \Lambda_{\mathfrak{C}}. \)
    \end{enumerate}
    A weak pseudo-involution is called a pseudo-involution if it additionally satisfies
    \begin{enumerate}
        \item[(iv)] For every \( x \in \Lambda_{\mathfrak{X}}, \) there exists a neighborhood \( U \) such that \( \iota(p(\Lambda) \cap U) \cap \left(\Lambda_{\mathfrak{S}}\cap U\right) = \emptyset. \)
    \end{enumerate}
\end{definition}
This last condition imposes that the two smooth strands near a point of \( \Lambda_{\mathfrak{X}} \) may not be exchanged by \( \iota. \)

Notice that \( \mathbb{X}_{\Lambda} \) partitions \( \mathfrak{B} \cong \R \) into a collection disjoint open intervals: \( \mathfrak{B} \setminus \mathbb{X}_{\Lambda} = \sqcup_{i = 1}^{N} I_{i}, \) without loss of generality ordered such that \( \overline{I}_i \cap \overline{I}_j = \emptyset \) unless \( i = j\pm 1. \) So for any collection \( \set*{x_i}_{i=1}^N \) with \( x_i \in I_i \), by continuity a pseudo-involution on \( p(\Lambda) \) is determined by its action on the fibers \( \pi^{-1}(x_i). \) 

Suppose \( \mathfrak{x} \in \Lambda_{\mathfrak{X}} \) lies over the intersection \( \overline{I}_i \cap \overline{I}_{i+1}. \) Let \( \sigma_i^{\pm} \) denote the two strands of \( \Lambda_{\mathfrak{S}} \cap \pi^{-1}(I_i) \) whose closure contains \( \mathfrak{x} \) (ordered such that the short path along the co-orientation direction near \( \mathfrak{x} \) runs from \( \sigma_{i}^{-} \) to \( \sigma_{i}^+ \)), and let \( \tau_{i}^{\pm} \) denote the component in \( \Lambda_{\mathfrak{S}} \) of their respective images under \( \iota. \) Similarly let \( \sigma_{i+1}^{\pm} \) denote the strands over \( I_{i+1} \) with common boundary \( \mathfrak{x}, \) ordered as before and \( \tau_{i+1}^{\pm} \) their images. By continuity, either \( \tau_{i}^{\pm} = \tau_{i+1}^{\pm} \) or \( \tau_{i}^{\pm} = \tau_{i}^{\mp}. \) In the former case the crossing \( \mathfrak{x} \) is called \emph{switching}, and in the latter it is called \emph{non-switching}.

Fix some \( x_j \in I_j, j \in \set*{i,i+1}. \) Then
\[ S_{j}^{\pm} := \left(\sigma_{j}^{\pm} \cup \tau_{j}^{\pm}\right) \cap \pi^{-1}(x_j) \]
define a pair of embeddings of \( S^0 \) into \( \pi^{-1}(x_j) \) for each \( j. \) Suppose \( \mathfrak{x} \) is switching. Then we call \( \mathfrak{x} \) \emph{positive} if the \( \Z/2 \) linking numbers \( \link(S_{j}^{+},S_{j}^{-}) \equiv 0 \) for both \( j \in \set*{i,i+1}. \)

Let \( \mu \) be a \( \Z/2 \) valued Maslov potential\footnote{The notion of Maslov potential is unambiguous in this setting, since the jet is contractible.} on \( p(\Lambda). \) A switching crossing \( \mathfrak{x} \) is called \emph{Maslov} if \( \mu(\sigma_j^{+}) = \mu(\sigma_j^{-}). \) Note that if \( \mathfrak{x} \) is Maslov for a particular Maslov potential, it is so for every such choice.

\begin{definition}
    A pseudo-involution on \( p(\Lambda) \) is called a \emph{normal ruling} of \( \Lambda \) if every switching crossing is both positive and Maslov.
\end{definition}

The main technical result of \cite{ChekanovPushkar05} shows that the number of normal rulings of a front generic Legendrian link \( \Lambda \subset J^1(\R;\R) \) is a Legendrian isotopy invariant. The proof proceeds via a case-by-case analysis of the codimension 2 bifurcation singularities that appear in generic 1-parametric Legendrian isotopies, showing that, given a normal ruling of the front before the bifurcation, there exists a unique normal ruling of the front after the bifurcation which preserves the difference between the number of switching crossings, and folds:
\[ \chi(\iota) := \#\!\Lambda_{\mathfrak{X}}^{sw} - \frac{1}{2}\#\!\Lambda_{\mathfrak{C}}. \]

\subsection{Fronts in the cylinder}
Now suppose \( \Lambda \subset J^1(\R;S^1), \) equipped with the fiber polarization. As before, we assume that \( p(\Lambda) \) is front-generic, and so has only simple crossings and folds. The definition of (positive, Maslov) pseudo-involution, and thus normal ruling, above may be implemented verbatim. We call these objects \emph{normal disk rulings}, and the set of all such \( \mathrm{R}^{\C}(\Lambda). \) The count of these is invariant by repeating the arguments of \cite{ChekanovPushkar05}, and denoted by \( \#\!\mathrm{R}^{\C}(\Lambda). \)

\begin{remark}
    As will become clear in \cref{sec:sheaf-invariants} the ``correct'' polarization with respect to which normal disk rulings ought to be defined is not the fiber polarization, but rather one which differs from this by a single orientable twist. Regarding \( J^1(\R;S^1) \) as the boundary of the trivial open book on \( T^*\R \) minus it's binding, this polarization extends over the filling, while the fiber polarization does not. Nevertheless, as long as we work with a representative of this polarization which is invariant in the cotangent coordinate, and with respect to which our Legendrian knot is generic, and we consider only \( \Z/2 \) valued Maslov potentials, a switching crossing is Maslov for the fiber polarization if and only if it is so for the twisted one by orientability. As such we will continue to work in the fiber polarization, but only with \( \Z/2 \) valued potentials.
\end{remark}

There is another reasonable definition of ruling for Legendrians in the \( S^1 \)-valued jet: Let \( \iota \) be a weak pseudo-involution on \( p(\Lambda). \) 
\begin{definition}\label{def:circular-ruling}
    An admissible marking \( \Gamma \) for \( \iota \) consists of a homotopy class of path
    \[ [\gamma_x] \in \pi_1(\pi^{-1}(\pi(x)),\set*{x} \cup \set*{\iota(x)}) \]
    for each \( x \in \Lambda_{\mathfrak{S}}, \) such that
    \begin{itemize}
        \item If \( \iota(x) \in \Lambda_{\mathfrak{S}} \) then \( [\gamma_{\iota(x)}] = [\gamma_{x}]^{-1}. \)
        \item If \( x,y \in \sigma, \) a component of \( \Lambda_{\mathfrak{S}}, \) then \[ [\gamma_x] = [\gamma_y] \in \pi_1(\pi^{-1}(\pi(\sigma)),\sigma \cup \iota(\sigma)). \]
        \item If \( x \in \Lambda_{\mathfrak{X}} \) is not in the image of \( \iota, \) and \( \sigma_{L}^{\pm},\sigma_{R}^{\pm} \) are the smooth strands containing \( x \) in their closure on the left and right of \( x, \) then neither \( [\gamma_L] \) nor \( [\gamma_R], \) the left or right homotopy classes of path, are trivial in
        \[ \pi_1\left(\pi^{-1}\left( \pi(U_x) \right), U_x \cap p(\Lambda) \right), \]
        for a small neighborhood \( x \in U_x \subset F. \) 
        \item If \( x \in \Lambda_{\mathfrak{C}}, \) \( \sigma^{\pm} \) the smooth strands containing \( x \) in their closure, then for \( y \in \sigma^{\pm} \) \( [\gamma_y] \) is trivial in 
        \[ \pi_1\left(\pi^{-1}\left( \pi(U_x) \right), U_x \cap p(\Lambda) \right), \]
        for small neighborhoods \( U_x \) of \( x. \)
    \end{itemize}
\end{definition}
A \emph{circular ruling} of \( \Lambda \) is a weak pseudo-involution \( \iota \) on \( p(\Lambda), \) together with an admissible marking \( \Gamma \). 

If a circular ruling \( (\iota,\Gamma) \) satisfies condition (iv) of \cref{def:pseudoinvolution} at \( x \in \Lambda_{\mathfrak{X}}, \) then we can still categorize \( x \) as either switching or non-switching as before. In fact, even for those \( x \in \Lambda_{\mathfrak{X}} \) which are not in the image of \( \iota \) we can still recover the notions of switching and non-switching using the marking data \( \Gamma: \) A crossing in the front of a Legendrian link admits a unique (up to isotopy) horizontal smoothing. A crossing point is called switching if the marking data of \( (\iota,\Gamma) \) away from the crossing extends over the horizontal resolution of \( \mathfrak{x}, \) and non-switching otherwise.

As before, a ruling of \( p(\Lambda) \) can only be guaranteed to persist under Legendrian isotopy under further assumptions on its behavior at switching points, i.e. if it satisfies some analog of the positivity condition described above. The correct notion here is a slight relaxation of positivity in the sense above, and can be thought of as ``positivity on the universal cover:''
Let \( x \in \Lambda_{\mathfrak{X}} \) be a switching crossing of a circular ruling \( (\iota,\Gamma), \) and suppose \( x \in \image(\iota). \) Let \( \sigma_{L}^{\pm},\sigma_{R}^{\pm} \) denote the left and right pairs of smooth strands of \( p(\Lambda) \) limiting to \( x, \) and \( [\gamma_{\sigma_{L}^{\pm}}], [\gamma_{\sigma_{R}^{\pm}}] \) their markings. Fix a lift \( \tilde{U}_x \) of a neighborhood \( U_x \) of \( x \) to the universal cover of the front \( \tilde{F}. \) Denote by \( [\tilde{\gamma}_{\sigma_{\bullet}^{\pm}}] \) (\( \bullet \in \set*{L,R} \)) the lift of a marking path to the universal cover which is based in \( \tilde{U}_x. \) 
\begin{definition}
    The crossing \( x \) is called positive if
    \[ \link([\tilde{\gamma}_{\sigma_{L}^{+}}],[\tilde{\gamma}_{\sigma_{L}^{-}}]) \equiv \link([\tilde{\gamma}_{\sigma_{R}^{+}}],[\tilde{\gamma}_{\sigma_{R}^{-}}]) \equiv 0 \mod 2, \]
    where this linking number is defined on the universal cover over some nearby smooth points \( x_L,x_R \in \pi(U_x)\setminus \pi(x). \)
\end{definition}
By convention, if \( x \not\in \image(\iota), \) we still call \( x \) positive.

With this in hand we define a \emph{normal circular ruling} as a pair \( (\iota,\Gamma) \) of weak pseudo-involution and admissible marking which is positive and Maslov at switching crossings, and denote the set of all such by \( \mathrm{R}^{\cyl}(\Lambda). \) Note that such a ruling has no switching points in the complement of the image of \( \iota, \) since these would necessarily be non-Maslov.
\begin{proposition}\label{prop:circular-ruling-count}
    Every normal circular ruling of a Legendrian link \( \Lambda \subset J^1(\R;S^1) \) admits a unique (characteristic) continuation over isotopy bifurcations. In particular, the number of normal circular rulings, \( \#\!\mathrm{R}^{\cyl}(\Lambda) \) is a Legendrian isotopy invariant.
\end{proposition}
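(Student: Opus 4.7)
The plan is to adapt the Chekanov--Pushkar strategy from \cite{cp2005} to the circular setting, verifying persistence of a normal circular ruling $(\iota,\Gamma)$ through each of the six codimension 2 bifurcations enumerated in \cref{dim-3-prelim}: the pointlike Reidemeister moves R1, R2, R3, and the XX-type coincidences $\mathfrak{XX}$, $\mathfrak{XC}$, $\mathfrak{CC}$. Invariance under ambient fibered isotopy is immediate, since such an isotopy acts by homeomorphisms on the front preserving every stratum and every fiber of $\pi$, and therefore carries all the combinatorial data of $(\iota,\Gamma)$ along tautologically.

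For each bifurcation, the argument proceeds in two stages. First, I would produce a unique continuation of the pseudo-involution $\iota$: away from a small neighborhood $U$ of the singular point the involution is unchanged by continuity, and inside $U$ the possible continuations are enumerated by the finite case analysis of \cite{cp2005}, of which exactly one preserves the quantity $\chi(\iota) := \#\Lambda_{\mathfrak{X}}^{sw} - \tfrac{1}{2}\#\Lambda_{\mathfrak{C}}$. Second, I would extend the marking $\Gamma$ across the bifurcation. Outside $U$ the marking paths are unchanged; inside they are forced by the admissibility axioms of \cref{def:circular-ruling}, namely triviality at cusps, constancy along smooth strands, and the inverse relation $[\gamma_{\iota(x)}] = [\gamma_x]^{-1}$. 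One then checks that the extended pair remains a normal circular ruling by verifying positivity and the Maslov condition at each switching crossing present after the bifurcation.

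The technically delicate point is the universal-cover positivity condition. Since every bifurcation is local, the neighborhood $U$ lifts diffeomorphically into the universal cover $\tilde F$, so the universal-cover linking numbers appearing in the definition of positivity reduce to ordinary linking numbers inside $U$; the standard arguments of \cite{cp2005} then apply verbatim to R1, R3, and the three XX-type bifurcations. The R2 bifurcation requires slightly more care, since a tangency resolves into a pair of new crossings whose marking on either side must match the marking of the adjacent pre-existing strands; admissibility forces these two new markings to be equal (up to inverse) and to agree with the common marking of the tangent strands, making the continuation canonical.

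The main obstacle will be handling switching crossings outside the image of $\iota$, which have no analog in the $\R$-valued jet and whose switching type is detected via the Legendrian resolution of the crossing rather than the involution itself. Here the task is to show that the extended marking still determines a well-defined switching type at such crossings, that positivity is automatic by convention, and that any such crossing produced by a bifurcation either inherits the Maslov property from one side or is removed outright on the other. Once these local checks are in place, uniqueness of the continuation at every bifurcation yields a bijection $\mathrm{R}^{\cyl}(\Lambda_0) \leftrightarrow \mathrm{R}^{\cyl}(\Lambda_1)$ for Legendrian isotopic $\Lambda_0,\Lambda_1$, and hence the invariance of $\#\mathrm{R}^{\cyl}(\Lambda)$.
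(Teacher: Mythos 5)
You have the right overall strategy — the Chekanov--Pushkar case-by-case check over the six generic bifurcations — and you correctly flag, at the end, the genuinely new phenomenon in the circular setting: crossings lying in $\Lambda_{\mathfrak{X}} \setminus \iota(p(\Lambda))$, which exist because $\iota$ is only a \emph{weak} pseudo-involution and have no counterpart in \cite{cp2005}. But the proposal as written mislocates where this phenomenon bites, and stops short of resolving it.

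You assert that ``the standard arguments of \cite{cp2005} then apply verbatim to R1, R3, and the three XX-type bifurcations'' and single out R2 as the delicate case. This has it backwards. The bifurcation where crossings outside the image of $\iota$ actually intervene is precisely \textbf{R3}: for such a crossing to exist, exactly two of the three strands in the triangle must be paired to each other by $\iota$ on both sides of the diagram, which forces that crossing out of the image of $\iota$ and makes the \cite{cp2005} enumeration inapplicable there (their analysis presumes condition (iv), which weak pseudo-involutions need not satisfy). The paper's proof treats the rest of the moves as standard and devotes its entire argument to this R3 case, subdividing by which pair of strands is self-paired and tracking, for each, how the markings $[\gamma_{\sigma_i,j}]$ (with their orientations up or down the fiber) force a unique normal, characteristic continuation. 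R2, by contrast, is not where the trouble lies.

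Relatedly, your two-stage plan — first extend $\iota$ uniquely by the $\chi$-preserving \cite{cp2005} enumeration, then extend $\Gamma$ by admissibility — does not cleanly separate: $\chi(\iota)$ counts switching crossings, but for crossings outside $\image(\iota)$ the switching/non-switching dichotomy is defined \emph{via} $\Gamma$ (through whether the marking extends over the Legendrian resolution), so the enumeration of $\iota$-continuations cannot proceed independently of $\Gamma$. You are also a bit loose about the universal-cover positivity check: the marking paths live in fibers of $\pi$ and wind around $S^1$, so their lifts are not contained in a lift of $U$; what makes the argument go is a case analysis of the marking orientations near the bifurcation, not a reduction to local linking numbers. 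In short, you have identified the shape of the proof and the essential obstacle, but the concrete analysis that overcomes that obstacle — the R3 case with an off-image crossing — is exactly what is missing.
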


\begin{proof}
    The proof is by a case-by-case analysis as in \cite{ChekanovPushkar05}. Let \( (\iota_0,\Gamma_0) \) be a normal circular ruling of \( \Lambda. \) The only bifurcation for which continuation does not follow directly from the analysis of \cite{ChekanovPushkar05} is the \textbf{R3} move, in the presence of crossings in \( \Lambda_{\mathfrak{X}} \setminus \iota(p(\Lambda)). \) By normality, these are never switching, and in order to occur, exactly two of the strands involved in the \textbf{R3} move must be paired by \( \iota_0 \) on both the left and right sides of the input diagram (i.e. the ruling before undergoing bifurcation). It suffices to consider the following geometric picture:
    \begin{figure}[ht]
    \centering
        \begin{tikzpicture}
            \draw[name path=curve1, red]
            (0,0) to [bend right=35] (3,1) to [bend left=30] (5,2);
            \draw[name path=curve2, blue]
            (0,2) to [bend right=-30] (2,1) to [bend left=-35] (5,0);
            \draw[name path=curve3, black]
                (0,1.3) to[bend right=0] (5,1.3);
            \path[name intersections={of={curve1 and curve2}, by={P12}}];
            \path[name intersections={of={curve1 and curve3}, by={P13}}];
            \path[name intersections={of={curve2 and curve3}, by={P23}}];

            \draw[->] (2.5,0.3) -- node[right] {$\pi$} (2.5,-1.2);

            \path let \p1 = (P12) in coordinate[vtx] (X12) at (\x1,-1.5);
            \path let \p1 = (P13) in coordinate[vtx] (X13) at (\x1,-1.5);
            \path let \p1 = (P23) in coordinate[vtx] (X23) at (\x1,-1.5);

            \draw[name path=baseleft, black] (0,-1.5) -- node[below] {$S_1$} (X23) -- node[below] {$S_2$} (X12) -- node[below] {$S_3$} (X13) -- node[below] {$S_4$} (5,-1.5);
            \node[left] at (0,-1.5) {$\mathfrak{B}$};
            
            \draw[->] (5.5,0.5) to node[above] {\textbf{R3}} (7.5,0.5);

            \draw[name path=curve4, red] (8,0) to [bend right=30] (10,1) to [bend left=35] (13,2);
            \draw[name path=curve5, blue] (8,2) to [bend right=-35] (11,1) to [bend left=-30] (13,0);
            \draw[name path=curve6, black] (8,0.7) to [bend right=0] (13,0.7);

            \path[name intersections={of={curve4 and curve5}, by={P45}}];
            \path[name intersections={of={curve4 and curve6}, by={P46}}];
            \path[name intersections={of={curve5 and curve6}, by={P56}}];

            \draw[->] (10.5,0.3) -- node[right] {$\pi$} (10.5,-1.2);

            \path let \p1 = (P45) in coordinate[vtx] (X45) at (\x1,-1.5);
            \path let \p1 = (P46) in coordinate[vtx] (X46) at (\x1,-1.5);
            \path let \p1 = (P56) in coordinate[vtx] (X56) at (\x1,-1.5);

            \draw[name path=baseright, black] (8,-1.5) -- node[below] {$S_{1}'$} (X46) -- node[below] {$S_{2}'$} (X45) -- node[below] {$S_{3}'$} (X56) -- node[below] {$S_{4}'$} (13,-1.5);
            \node[right] at (13,-1.5) {$\mathfrak{B}$};

            \foreach \i/\j in {1/2,1/3,2/3} {
                \coordinate[vtx] (Q\i\j) at (P\i\j);
            }
            \node[above] at (P12) {$x$};
            \node[below right] at (P13) {$y$};
            \node[below left] at (P23) {$z$};

            \foreach \i/\j in {4/5,4/6,5/6} {
                \coordinate[vtx] (Q\i\j) at (P\i\j);
            }
            \node[above] at (P45) {$x'$};
            \node[above left] at (P46) {$y'$};
            \node[above right] at (P56) {$z'$};
            
            \foreach \x [count=\i] in {0,1.3,2} {
                \node[left] at (0,\x) {$\sigma_\i$};
            }

            \foreach \x [count=\j] in {2,0.7,0} {
                \node[right] at (13,\x) {$\sigma_{\j}$};
            }
        \end{tikzpicture}
    \centering
    \label{fig:R3-front}
\end{figure}
    In what follows we will use the notation of the above figure. When we refer to a specific marking, we will denote by \( [\gamma_{\sigma_i,j}] \) the marking of the strand colored \( \sigma_i \) over the region \( S_j, \) \( [\gamma_{\sigma_i,j}'] \) similarly. By convention, we take the strands in the above figure to be co-oriented upwards. Similarly, we will speak of a marking \( [\gamma_{\sigma}] \) as either oriented \emph{upwards} or \emph{downwards}, meaning, respectively, a marking realized by a path which travels from \( \sigma \) to \( \iota(\sigma) \) along the induced orientation of the fibers of \( pi, \) or against it.

    The continuation problem can be broken down into three broad cases depending on which two of the strands \( \sigma_i \) are matched by \( (\iota,\Gamma) \) over the region \( S_1. \) 

    We will start with the case \( \iota(\sigma_3 \cap \pi^{-1}(S_1)) = \sigma_2. \) In this situation, \( z \) is necessarily non-switching by the Maslov condition, and we subdivide further into the cases where zero, or one of \( x \) and \( y \) are switching. Note that both cannot switch by normality. When neither switches, it is straightforward to see that the ruling may be continued by one which is non-switching at all of \( x',y',z', \) and moreover that this is still normal (automatically), and preserves the number of switches (i.e. characteristic).
    
    When only \( x \) is switching, then the only possible continuations which match on the boundary are \( y',z' \) switching and \( x' \) non-switching, or \( x' \) switching, but neither of \( y',z'. \) Analysing the possible markings that allow \( x \) to switch, we see that, after traversing a non-switching \( y', \) either both \( [\gamma_{\sigma_1,2}'] \) and \( [\gamma_{\sigma_3,2}'] \) are oriented the same direction, and have covering linking number equal to 
    \[ \link([\tilde{\gamma}_{\sigma_1,1}],[\tilde{\gamma}_{\sigma_1,1}]) \]
    or are oppositely oriented, with \( [\gamma_{\sigma_1,2}'] \) pointing down and \( [\gamma_{\sigma_3,2}'] \) pointing up, thus having covering linking number 0. In either case, this yields a normal, characteristic continuation, while the alternative is usually not positive, and never characteristic, so we have our desired unique continuation.
    
    Finally when only \( y \) is switching there are again two possible continuations, but the only one which is characteristic is when only \( y' \) is switching. Similar to the above, in order for \( y \) to switch, either \( [\gamma_{\sigma_{2},1}]  \) and \( [\gamma_{\sigma_{1},1}] \) are both oriented the same way, and have covering linking 0, or the former is oriented upwards, and the latter downwards. In either case, we see that the conditions for \( y' \) to be a positive switch are satisfied, and the Maslov condition is automatic so this is a normal characteristic continuation, and is the unique such as already noted. 
    
    This concludes the case \( \iota(\sigma_3 \cap \pi^{-1}(S_1)) = \sigma_2. \) The remaining two are similar, and left to the reader.
\end{proof}

\subsection{Non-squeezing}\label{sec:3d-nonsqueezing}
With our two ruling counts in hand we are ready to tackle the non-squeezing problem. The main result of this section is the following elementary analog of \cref{thm:pushing-out}:
\begin{theorem}\label{thm:short-rulings-push-out}
    Suppose \( \Lambda \subset \widehat{D(1)} \subset J^1(\R;S^1) \) is a smoothly embedded Legendrian link contained in the interior of the pre-quantized disk of radius 1. Then 
    \begin{equation}\label{eqn:ruling-ineq} \#\!\mathrm{R}^{\C}(\Lambda) \leq \#\!\mathrm{R}^{\cyl}(\Lambda). \end{equation}
    In particular, if \( \Lambda \subset J^1(\R;S^1) \) violates \cref{eqn:ruling-ineq}, then \( \Lambda \) is not squeezable.
\end{theorem}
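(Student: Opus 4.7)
The plan is to construct an injection \( \mathrm{R}^{\C}(\Lambda) \hookrightarrow \mathrm{R}^{\cyl}(\Lambda) \), which immediately yields the desired inequality. Given a normal disk ruling \( \iota \), I will equip it with a canonical admissible marking \( \Gamma_\iota \) so that \( (\iota, \Gamma_\iota) \) becomes a normal circular ruling, and observe that this assignment is injective because the forgetful map on the first coordinate recovers \( \iota \).

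First I would exploit the area hypothesis \( \Lambda \subset \widehat{D(1-\epsilon)} \) to obtain a canonical lift of \( \Lambda \). The integer period of the Lagrangian projection of each component is strictly dominated by the area \( 1-\epsilon < 1 \), and hence must vanish. Consequently \( \Lambda \) lifts coherently to a compact Legendrian \( \tilde{\Lambda} \subset J^1(\R;\R) \) whose \( t \)-coordinate lies in an interval of length strictly less than \( 1 \). In particular, within each fiber of \( \pi \), all points of \( \Lambda \) sit in a common arc of \( S^1 \) of length \( < 1 \).

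Second I would define the marking. For each \( x \in \Lambda_{\mathfrak{S}} \), let \( [\gamma_x] \) be the homotopy class of the image under the covering map \( \R \to S^1 \) of the fiber-straight path from \( \tilde{x} \) to \( \widetilde{\iota(x)} \) in \( \tilde{\Lambda} \). The four admissibility conditions of \cref{def:circular-ruling} follow directly from the construction: path reversal gives inverse paths for inverse \( \iota \)-pairs; lifts vary continuously along smooth strands, giving local constancy; non-triviality at unmatched crossings follows from the pseudo-involution condition (iv) on \( \iota \); and triviality at cusps is inherited from \( \iota \) fixing cusps. Normality is then verified as follows: the Maslov condition at switches is inherited, since any Maslov potential on \( \Lambda \) pulls back along the covering. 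For positivity, at each switching crossing the four relevant lifted points lie in a common interval of \( \R \)-length strictly less than \( 1 \), so their \( \Z/2 \) linking computed in \( \R \) (the circular-ruling positivity condition) agrees with their \( \Z/2 \) linking computed in \( S^1 \) (the disk-ruling positivity condition), and the latter vanishes by normality of \( \iota \).

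The main obstacle I expect is the first geometric step — extracting both vanishing of the period and the strict \( t \)-oscillation bound from the containment in \( \widehat{D(1-\epsilon)} \). The Lagrangian projection is typically immersed, not embedded, so one cannot directly invoke Stokes on a bounding disk; instead one must combine integrality of \( \int\lambda \) on each component with an isoperimetric-type bound on the primitive's oscillation, accounting for signed enclosed area with multiplicity. Once this geometric bound is in hand, the admissibility checks, the comparison of \( S^1 \)-linking with \( \R \)-linking, and injectivity of \( \iota \mapsto (\iota,\Gamma_\iota) \) are essentially routine, and the final assertion that a violation of the inequality forces non-squeezability follows immediately by contrapositive from the Legendrian isotopy invariance of \( \#\mathrm{R}^{\C}(\Lambda) \) and \( \#\mathrm{R}^{\cyl}(\Lambda) \).
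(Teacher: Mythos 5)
Your marking is essentially the paper's, but the route to it has a genuine gap at your first step, and the difficulty you flag there is fatal rather than merely technical. Containment in \( \widehat{D(1-\epsilon)} \) does not force the period \( \left[\lambda\res{\Lambda}\right] \) to vanish, nor, even when it does, does it bound the \( t \)-oscillation of a lift by \( 1 \): an immersed closed curve in a small disk can enclose arbitrarily large signed area with multiplicity, so there is no isoperimetric estimate to rescue the argument. Perturbing a \( k \)-fold cover of a circle of area \( a \) to a generically immersed curve and taking a Legendrian lift produces \( \Lambda\subset\widehat{D(a+\epsilon)} \) with period roughly \( ka \), which is unbounded; figure-eight curves with cancelling lobes traversed many times show the oscillation of the primitive is likewise unbounded even at period zero. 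So \( \Lambda \) need not lift to \( J^1(\R;\R) \), and there is no global ``fiber-straight path'' to push down.

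The paper avoids this by never attempting a global lift. It uses the Chekanov--Pushkar \emph{eye} decomposition induced by the disk ruling \( \iota \): each eye is a closed curve in \( \R\times S^1 \) with exactly two cusps, so the angular separation between its two sheets, lifted to \( \R \), equals \( 0 \) at both cusp endpoints. Along the eye this separation evolves at a rate given by the difference of the slopes (i.e.\ the \( p \)-coordinates) of the two sheets, which is controlled because \( \Lambda\subset\widehat{D(1)} \); together with the boundary condition at the cusps, this forces the separation to stay strictly below half the fiber length, so each eye carries a canonical short homotopy class of fiber path. These local choices, made eye-by-eye, assemble into \( \Gamma_\iota \); normality at switches and injectivity of \( \iota\mapsto(\iota,\Gamma_\iota) \) then go through essentially as you describe. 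The point is that the cusp boundary condition on each individual eye supplies the constraint that the global containment hypothesis alone cannot, so the short path must be extracted locally, not from a lift of all of \( \Lambda \).
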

\begin{proof} We will show that, under the above embedding assumption, every normal disk ruling can be ``expanded'' to a normal circular ruling, and that distinct disk rulings remain distinct after expansion. 

We first describe the expansion procedure. Let \( \Lambda \subset J^1(\R;S^1) \) be as in \cref{thm:short-rulings-push-out} (and be equipped with a Maslov potential), and suppose \( \iota \in \mathrm{R}^{\C}(\Lambda) \) is a positive, Maslov pseudo-involution. As in \cite{ChekanovPushkar05}, \( \iota \) decomposes \( p(\Lambda) \) into a collection of ``eyes:'' continuously embedded circles, each with exactly 2 cusps, which meet only at crossings of \( p(\Lambda), \) such that the two points of an eye in a given fiber of \( \pi \) are always paired by \( \iota. \)

The key observation is then that, if \( x,y \in \Lambda_{\mathfrak{S}} \) are paired by \( \iota, \) \( \iota(x) = y, \) the angular separation of the points of the corresponding eye changes according to the difference between the \emph{slopes} of the front at \( x,y, \) and moreover, on either end of the eye, this angular separation must be 0, since the two strands must meet in a cusp. These slopes are exactly the cotangent coordinate in the jet, and we take the fiber of \( \pi \) to have length 1, so since \( \Lambda \subset \widehat{D(1)}, \) we conclude that no eye of \( \iota \) can ever have angular separation \( \pi \) (i.e. antipodal points are never matched by \( \iota \)). It follows that over each point of \( \mathfrak{B}, \) there is a continuously varying canonical short homotopy class of path between the points of a given eye of \( \iota, \) which, by normality, exactly satisfy the conditions of an admissible marking as described in \cref{def:circular-ruling}. Denote this marking by \( \Gamma_{\iota}. \)

Then \( \iota \mapsto (\iota,\Gamma_{\iota}) \) defines the required map \( \mathrm{R}^{\C}(\Lambda) \rightarrow \mathrm{R}^{\cyl}(\Lambda). \) It remains to check that distinct disk rulings remain distinct, but this is immediate, as the data of \( \iota \) (regarded now as only a weak pseudo-involution) is preserved. Thus our mapping is injective, and the claimed inequality follows.
\end{proof}

The simplest example of a non-squeezable Legendrian knot is the lift of an embedded loop in \( T^*\R \) enclosing area 1, a front for which is drawn below:
\begin{center}
    \begin{tikzpicture}[mydash/.style={dashed,dash pattern=on 1.5pt off 1pt}]
        \draw (0,0) ellipse (0.5 and 1);
        \draw (10,-1) arc(-90:90:0.5 and 1);
        \draw[mydash] (10,1) arc(90:270:0.5 and 1);
        \draw (0,-1) -- (10,-1);
        \draw (0,1) -- (10,1);
        \draw[thick] (2,0) to[out=0,in=180] (5,1);
        \draw[thick] (5,-1) to[out=180, in=0] (2,0);
        \draw[thick,mydash,gray] (5,1) to[out=0,in=180] (8,0);
        \draw[thick,mydash,gray] (5,-1) to[out=0,in=180] (8,0);
    \end{tikzpicture}
\end{center}
This admits exactly one disk ruling, which simply pairs the strands in each fiber, and no circular ruling, since any marking cannot be trivial at both ends. By passing to a cover, a lift of any loop enclosing integral area can be made to be of this form, and so we conclude:
\begin{corollary}\label{cor:elem-unknot-nonsqueezing}
    Let \( L \subset T^*\R^n \) is a smoothly embedded loop bounding area \( k \in \N, \) and \( \Lambda \subset J^1(\R;S^1) \) a Legendrian lift. Then \( \Lambda \) does not admit a Legendrian isotopy into the interior of \( \widehat{D}(k). \)
\end{corollary}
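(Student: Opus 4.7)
My plan is to reduce the general claim to the $k=1$ case which is dispatched by the area-$1$ example pictured just above the statement, via a $k$-fold cover of $J^1(\R;S^1)$ in the Reeb direction.

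For $k=1$, every smoothly embedded loop in $T^*\R$ bounding area $1$ is Hamiltonian isotopic to the standard one pictured (by Moser's trick), so its Legendrian lift is Legendrian isotopic to the displayed standard Legendrian. Since the ruling counts are Legendrian invariants, we obtain $\#\mathrm{R}^{\C}(\Lambda) = 1 > 0 = \#\mathrm{R}^{\cyl}(\Lambda)$, and \cref{thm:short-rulings-push-out}, applied contrapositively, precludes any Legendrian isotopy of $\Lambda$ into the interior of $\widehat{D}(1)$.

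For general $k \in \N$, I would introduce the $k$-fold contact covering
\[ \pi_k : J^1(\R;S^1) \to J^1(\R;S^1), \qquad \pi_k(x,q,\tau) = \Lx x,\, kq,\, k\tau \bmod 1 \Rx. \]
The pullback relation $\pi_k^{*}(dt - p\,dx) = k(d\tau - q\,dx)$ confirms $\pi_k$ is a local contactomorphism. The preimage $\tilde\Lambda := \pi_k^{-1}(\Lambda)$ of a lift $\Lambda$ of an area-$k$ loop $L$ is a single embedded Legendrian loop --- connected because the $t$-winding number of $\Lambda$ is exactly $k$, so one full traversal of $\Lambda$ lifts to a path threading all $k$ sheets of the cover before closing up --- and it is itself the Legendrian lift of a loop $\tilde L \subset T^*\R$ bounding area $\oint q\,dx = k^{-1}\!\oint p\,dx = 1$. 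A hypothetical Legendrian isotopy of $\Lambda$ into the interior of $\widehat{D}(k)$ then lifts through $\pi_k$ to a Legendrian isotopy of $\tilde\Lambda$ into the interior of $\pi_k^{-1}(\widehat{D}(k))$, the pre-quantization of the ellipse $\set*{x^2 + k^2 q^2 \le k/\pi}$ of area $1$. Applying the $k=1$ argument to $\tilde\Lambda$ inside this region then yields the contradiction.

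The only noncosmetic point is that the obstruction in \cref{thm:short-rulings-push-out} depends only on the area, not the round shape, of the containing front region: in the given proof the ambient containment is used solely to bound the maximum $t$-separation of any ruling eye by the enclosed area of its front projection, hence by the area of the ambient domain. The remaining verifications --- that $\pi_k$ is a contact covering, that Legendrian isotopies lift through it, and that $\tilde\Lambda$ is a single connected loop --- reduce to the coordinate computation indicated above together with standard covering-space theory.
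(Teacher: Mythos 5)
Your overall strategy matches the paper's: the paper's own proof is a one-line appeal to ``passing to a cover,'' for which you supply an explicit coordinate model $\pi_k$, and the Moser reduction for $k=1$, the conformal pullback computation $\pi_k^*(dt-p\,dx)=k(d\tau-q\,dx)$, the area computation for the rescaled loop, and the remark that the obstruction of \cref{thm:short-rulings-push-out} persists under a further area-preserving Moser normalization of the ambient ellipse are all in order.

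The genuine error is the claim that $\tilde\Lambda := \pi_k^{-1}(\Lambda)$ is a single embedded loop; the justification offered is exactly backwards. Since $\pi_k$ acts on $\pi_1(J^1(\R;S^1)) \cong \Z$ by multiplication by $k$, its image is $k\Z$, and $[\Lambda] = \pm k$ lies in this image; a lift of $\Lambda$ starting at any preimage point advances the covering coordinate $\tau$ by $t/k$, hence by exactly $\pm 1$ over one full traversal of $\Lambda$, so it closes up after a single traversal without ever changing sheets. Consequently $\pi_k^{-1}(\Lambda)$ consists of $k$ disjoint Legendrian loops, each projecting diffeomorphically onto $\Lambda$ under $\pi_k$ and each the Legendrian lift of a loop of area $1$. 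This does not damage the structure of the argument --- a lifted ambient contact isotopy carries each component into $\pi_k^{-1}(\widehat{D}(k))$, so you may simply restrict attention to a single component $\tilde\Lambda_1$ and run the $k=1$ argument there --- but the connectedness claim and the ``threading all $k$ sheets'' reasoning are false as written and should be replaced by this restriction to one component.
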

This is the 3-dimensional case of \cref{thm:high-d-squeezing}. The remainder of this article is devoted the high dimensional version.

\section{Categorical generalities}\label{sec:preliminaries}
In this section we'll collect some more terminology to be used throughout the remainder of the article. Fix a (graded) commutative (graded) ring \( R. \)

We denote by \( C(R) \) the \( R \)-linear abelian category of unbounded chain complexes of \( R \) modules, with morphisms given by chain maps. We will assume our categories are small. 

\subsection{Differential graded categories}
We briefly recall the essentials of dg-categories, and their homotopy theory. For details we refer to \cite{Toen-dg-lectures,Keller-dgcats,Toen-Morita}. A (small) \emph{dg-category} \( \mathcal{C} \) over \( R \) consists of the data of
\begin{itemize}
    \item A set of objects \( \ob(\mathcal{C}). \)
    \item For each pair \( x,y \in \ob(\mathcal{C}) \) an object of \( C(R), \) \( \hom(x,y). \)
    \item For each triple \( x,y,z \in \ob(\mathcal{C}) \) a \emph{composition} (\( C(R) \)-)morphism
    \[ \mu_{x,y,z}: \hom(x,y) \otimes \hom(y,z) \rightarrow \hom(y,z) \]
    \item For each \( x \in \ob(\mathcal{C}) \) a \emph{unit} morphism \( e_x:R \rightarrow \hom(x,x). \)
\end{itemize}
Such that the diagrams
\begin{center}
\begin{tikzpicture}
    \node (A) at (0,2) {$\hom(x,y) \otimes \hom(y,z) \otimes \hom(z,w)$};
    \node (B) at (8,2) {$\hom(x,z) \otimes \hom(z,w)$};
    \node (C) at (0,0) {$\hom(x,y) \otimes \hom(y,w)$};
    \node (D) at (8,0) {$\hom(x,w)$};
    \draw[->] (A) to node[above] {$\mu_{x,y,z} \otimes \id$} (B);
    \draw[->] (A) -- node[left] {$\id \otimes \mu_{y,z,w}$} (C);
    \draw[->] (B) -- node[right] {$\mu_{x,z,w}$} (D);
    \draw[->] (C) -- node[below] {$\mu_{x,y,w}$} (D);
\end{tikzpicture}
\end{center}
and
\begin{center}
    \begin{tikzpicture}
        \node (A) at (0,0) {$\hom(x,y) \simeq R \otimes \hom(x,y)$};
        \node (B) at (6,0) {$\hom(x,x) \otimes \hom(x,y)$};
        \node (C) at (12,0) {$\hom(x,y)$};
        \node (D) at (0,-2) {$\hom(x,y) \simeq \hom(x,y) \otimes R$};
        \node (E) at (6,-2) {$\hom(x,y)\otimes\hom(y,y)$};
        \node (F) at (12,-2) {$\hom(x,y)$};
        \draw[->] (A) -- node[above] {$e_x \otimes \id$} (B);
        \draw[->] (B) -- node[above] {$\mu_{x,x,y}$} (C);
        \draw[->] (A) to [bend right=10] node[below] {$\id$} (C);
        \draw[->] (D) -- node[above] {$\id \otimes e_y$} (E);
        \draw[->] (E) -- node[above] {$\mu_{x,y,y}$} (F);
        \draw[->] (D) to [bend right=10] node[below] {$\id$} (F);
    \end{tikzpicture}
\end{center}
commute for any \( x,y,z,w \in \ob(\mathcal{C}). \) In the sequel, we will leave the choice of ground ring implicit, and simply write \emph{dg-category}.

\begin{example}
    The category \( \mathbf{C}(R) \) whose objects are chain complexes of \( R \)-modules, and whose morphisms are the internal homs of chain complexes is a dg-category.
\end{example}

\subsubsection{Functors and modules}
Let \( \mathcal{C},\mathcal{D} \) be dg-categories. A dg-functor \( F:\mathcal{C} \rightarrow \mathcal{D} \) is the data of a map \( F:\ob(\mathcal{C}) \rightarrow \ob(\mathcal{D}), \) and morphisms of complexes \( F_{x,y}:\hom(x,y) \rightarrow \hom(F(x),F(y)), \) for every \(x,y\in\ob(\mathcal{C}),\) which are compatible with the unit and composition data of \( \mathcal{C} \) and \( \mathcal{D}. \) Such functors may be composed, and we denote the category with objects dg-categories, and morphisms dg-functors by \( \dgcat. \)

To a dg-category \( \mathcal{C}, \) there is a canonically associated \( R \)-linear category \( [\mathcal{C}] \) called its \emph{homotopy category}, whose objects are the same as those of \( \mathcal{C}, \) and morphisms are
\[ \hom_{[\mathcal{C}]}(x,y) := H^0(\hom(x,y)). \]
Every dg-functor induces a(n ordinary) functor \( [F]:[\mathcal{C}] \rightarrow [\mathcal{D}] \) (i.e. the operation \( [\cdot] \) is a functor from \( \dgcat \) to \( \cat. \)).

A dg-functor \( F \) is called \emph{quasi-fully-faithful} if \( F_{x,y} \) is quasi-isomorphism for every \( x,y \in \ob(\mathcal{C}). \) \( F \) is called \emph{quasi-essentially surjective} if \( [F] \) is essentially surjective. \( F \) is called a \emph{quasi-equivalence} if it is both quasi-fully-faithful and quasi-essentially surjective. 

Given dg-functors \( F,G:\mathcal{C} \rightarrow \mathcal{D} \) a dg-natural-transformation \( \psi:F \rightarrow G \) of degree \( d \) is a natural transformation composed of morphisms \( \psi(x):F(x) \rightarrow G(x) \) which lie in \( \hom^d_{\mathcal{D}}(F(x),G(x)). \) Composition is defined by composition in \( \mathcal{D}. \)

The dg-natural-transformations can be assembled into a complex \( \hom(F,G), \) whose degree \( d \) elements are the degree \( d \) natural transformations, and differential is given by \( (d\psi)(x) := d_{\mathcal{D}}(\psi(x)). \) From this perspective, composition induces a chain map
\[ \hom(F,G) \otimes \hom(G,H) \rightarrow \hom(F,H). \]
In particular, the collection of all dg-functors \( \mathcal{C} \rightarrow \mathcal{D} \) can be assembled into a dg-category \( \fun_{dg}(\mathcal{C},\mathcal{D}) \) whose morphism complexes and composition are defined as above.

The dg-functors to chain complexes are of particular importance:
\begin{definition}\label{def:dg-modules}
    A (left) dg-module \( \mathcal{M} \) over a dg-category \( \mathcal{C} \) is a dg-functor
    \[ \mathcal{M}: \mathcal{C}^{op} \rightarrow \mathbf{C}(R). \]
    In this special case, we denote the category \( \fun_{dg}(\mathcal{C},\mathbf{C}(R)) \) by \( \mathcal{C}\!-\!\mods. \)
\end{definition}

\begin{example}
    The most important examples of dg-modules over a dg-category \( \mathcal{C} \) are provided by the \emph{Yoneda} modules induced by the objects of \( \mathcal{C}: \)
    \[ \mathcal{Y}^l_{x}(y) := \hom_{\mathcal{C}}(y,x) \qquad \mathcal{Y}^r_{x}(y) := \hom_{\mathcal{C}}(x,y) \]
    are, respectively, left and right dg-modules over \( \mathcal{C}. \) Together these assemble into the dg-Yoneda embeddings
    \[ \mathcal{Y}^l: \mathcal{C} \rightarrow \mathcal{C}\!-\!\mods \qquad \mathcal{Y}^r: \mathcal{C}^{op} \rightarrow \mathcal{C}^{op}\!-\!\mods. \]
\end{example}

There is a model structure (see \cite{Tabuada-models}) on \( \dgcat \) whose weak equivalences are given by quasi-equivalences, and whose fibrations are the dg-functors \( F:\mathcal{C} \rightarrow \mathcal{D} \) such that
\begin{itemize}
    \item \( F_{x,y} \) is an epimorphism of \( R \)-complexes for every \( x,y \in \ob(\mathcal{C}). \)
    \item Given objects \( x \in \ob(\mathcal{C}), \) \( z \in \ob(\mathcal{D}) \) and a morphism \( a:F(x) \rightarrow z \) such that \( [a] \) is an isomorphism in \( [\mathcal{D}], \) there is an object \( y \in \ob(\mathcal{C}) \) and a morphism \( b:x \rightarrow y, \) which is an isomorphism in the homotopy category, such that \( F(y) = z \) and \( F(b) = a. \)
\end{itemize}
We call this the \emph{quasi-equivalence} model structure, and denote by \( [\dgcat]^{qe} \) the corresponding homotopy category.

\subsubsection{Derived categories}
Throughout the remainder of the paper, we will usually work with dg-derived categories. We collect here some more relevant facts and definitions.

The (ordinary) derived category of a ring \( R \) is classically defined as the homotopy category of chain complexes of \( R \)-modules equipped with the model structure whose weak equivalences are quasi-isomorphisms, and fibrations are provided by term-wise projective complexes. This can be viewed as a localization of the category of complexes of \( R \)-modules, along the quasi-isomorphisms, or, equivalently, a quotient by the full sub-category of acyclic complexes (in the sense of Verdier). This last approach was applied to dg-categories by Drinfeld:
\begin{theorem*}[\cite{drinfeld-quotients}]
Let \( \mathcal{A} \) be a dg-category, and \( \mathcal{B} \subset \mathcal{A} \) be a full dg-subcategory. Then there is a dg-category \( \mathcal{A} \rightarrow \mathcal{A}/\mathcal{B}, \) characterized by the property that the induced map of sets
\[ \hom_{[\dgcat^{qe}]}(\mathcal{A}/\mathcal{B},\mathcal{C}) \rightarrow \hom_{[\dgcat^{qe}]}^{\mathcal{B}}(\mathcal{A},\mathcal{C}) \]
is a bijection for all \( \mathcal{C} \)\footnote{This is a more categorical reformulation of Drinfeld's result due to Tabuada in \cite{Tabuada-dg-quotient}}. Here \( \hom_{[\dgcat^{qe}]}^{\mathcal{B}}(\mathcal{A},\mathcal{C}) \) denotes the morphisms which send all objects of \( \mathcal{B} \) to objects of \( \mathcal{C} \) with acyclic endomorphisms.
\end{theorem*}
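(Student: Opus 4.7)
The plan is to follow Drinfeld's explicit construction and then verify the universal property in two stages: first producing a strict lift, then establishing its essential uniqueness. To build \( \mathcal{A}/\mathcal{B} \), I would start from the same set of objects as \( \mathcal{A} \) and formally adjoin, for each object \( C \in \mathcal{B} \), a new endomorphism \( \epsilon_C \) of degree \( -1 \) subject to the relation \( d\epsilon_C = \id_C \). Concretely, the morphism complexes become free graded \( R \)-modules on composable strings alternating morphisms of \( \mathcal{A} \) with the new symbols \( \epsilon_{C_i} \); the differential extends by the Leibniz rule, and composition is concatenation. The canonical dg-functor \( Q: \mathcal{A} \to \mathcal{A}/\mathcal{B} \) is then the evident inclusion, and by construction \( \epsilon_C \) serves as a contracting homotopy for \( \hom_{\mathcal{A}/\mathcal{B}}(C,C) \) when \( C \in \mathcal{B} \), so \( Q \) sends \( \mathcal{B} \) to objects with acyclic endomorphisms.

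Next I would address the universal property at the level of strict dg-functors. Given \( F: \mathcal{A} \to \mathcal{C} \) sending each \( C \in \mathcal{B} \) to an object with acyclic endomorphisms, the cohomology class of \( \id_{F(C)} \) vanishes, so one may choose, for each \( C \in \mathcal{B} \), a null-homotopy \( \eta_C \) of degree \( -1 \) with \( d\eta_C = \id_{F(C)} \). Extending \( F \) by sending \( \epsilon_C \mapsto \eta_C \) and respecting concatenation yields a strict dg-functor \( \tilde F : \mathcal{A}/\mathcal{B} \to \mathcal{C} \) refining \( F \). This already settles surjectivity of the map of Hom-sets in the statement.

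The hard part will be upgrading this to a bijection in \( [\dgcat^{qe}] \), since the lift \( \tilde F \) visibly depends on the choice of null-homotopies. For alternative choices \( \eta_C' \), each difference \( \eta_C - \eta_C' \) is a cocycle in an acyclic complex, hence itself a coboundary with some degree \( -2 \) primitive, and I would try to assemble these primitives into a dg-natural-transformation exhibiting \( \tilde F \) and \( \tilde F' \) as homotopic. Organizing the choice-dependence coherently over all of \( \mathcal{B} \) simultaneously, in a way that respects the \( \mathcal{A} \)-linear structure on the morphism complexes, is the principal obstacle; the cleanest route, which I would ultimately follow, is Tabuada's reformulation \cite{Tabuada-dg-quotient}, in which the Drinfeld construction is recognized as a cofibrant replacement in a suitable left Bousfield localization of \( [\dgcat]^{qe} \) inverting the maps \( 0 \to C \) for \( C \in \mathcal{B} \). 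Within this framework the universal property reduces to a standard lifting problem between cofibrant and fibrant objects, and both surjectivity and uniqueness become formal.
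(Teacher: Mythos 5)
The paper does not prove this statement; it is quoted verbatim (with Tabuada's reformulation) from \cite{drinfeld-quotients} and \cite{Tabuada-dg-quotient} as part of the background in \cref{sec:preliminaries}, and nothing in the paper turns on reproving it. So there is no internal proof to compare your sketch against. Judged on its own terms, your outline correctly reproduces Drinfeld's construction of the quotient and the elementary surjectivity heuristic, but there are two genuine gaps.

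First, the ``naive'' Drinfeld quotient you describe — adjoining $\epsilon_C$ with $d\epsilon_C = \id_C$ directly to $\mathcal{A}$ — only has the right homotopy type if $\mathcal{A}$ is homotopically flat (h-flat) over the ground ring, or more generally cofibrant. Drinfeld's actual definition passes first to an h-flat (``semi-free'') resolution $\tilde{\mathcal{A}} \xrightarrow{\sim} \mathcal{A}$ and sets $\mathcal{A}/\mathcal{B} := \tilde{\mathcal{A}}/\tilde{\mathcal{B}}$ for the naive quotient of the resolution. Over a field this is automatic, but your sketch presents the construction as if it worked for an arbitrary $\mathcal{A}$ over an arbitrary $R$, which it does not; and the universal property you want is about hom-sets in $[\dgcat^{qe}]$, so getting the homotopy type right is not cosmetic.

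Second, your surjectivity argument quietly assumes that an element of $\hom_{[\dgcat^{qe}]}^{\mathcal{B}}(\mathcal{A},\mathcal{C})$ is represented by a \emph{strict} dg-functor $F:\mathcal{A}\to\mathcal{C}$, which in the localized category it need not be: one must first pass to a cofibrant replacement of $\mathcal{A}$ (every dg-category is fibrant in the qe model structure, so only the source matters) and then argue that the choice of representative, in addition to the choice of null-homotopies $\eta_C$, does not affect the resulting class. Your proposal flags the dependence on the $\eta_C$ but not the dependence on the strict representative. The appeal to Tabuada is the right move to close both gaps at once, but your description of his argument is off: \cite{Tabuada-dg-quotient} does not proceed by left Bousfield localization inverting $0 \to C$; it computes the relevant mapping sets directly using To\"en's identification of morphisms in $[\dgcat^{qe}]$ with isomorphism classes of right quasi-representable bimodules, and then checks that precomposition with $\mathcal{A} \to \mathcal{A}/\mathcal{B}$ restricts to a bijection onto the bimodules that are acyclic on $\mathcal{B}$. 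Saying ``both surjectivity and uniqueness become formal'' after a Bousfield localization overstates what is actually a nontrivial bimodule computation.
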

Applying this to the dg-category of complexes of \( R \)-modules \( \mathbf{C}(R), \) and the full sub-category of acyclic complexes yields the dg-enhancement of the derived category of \( R, \) \( \mods_R. \) 

More generally if \( \mathcal{C} \) is a dg-category, we define its \emph{dg-derived category} \( \der\mathcal{C} \) to be the quotient of \( \mathcal{C}\!-\!\mods \) by the full subcategory of functors which land in acyclic complexes. Equivalently, by the universal property above, this is \( \fun_{dg}(\mathcal{C},\mods_R). \) 

There is a natural functor \( \mathcal{C} \rightarrow \der\mathcal{C} \) obtained by composing the (left) Yoneda embedding with the localization \( \mathcal{C}\!-\!\mods \rightarrow \der\mathcal{C}. \) We denote by \( \mathcal{C}^{tri} \) the full dg-subcategory of \( \der\cc \) spanned by finite pushouts of the image of \( \mathcal{C} \) under the derived Yoneda embedding. 
\begin{definition}\label{def:perfect-and-proper}
    The dg-category of \emph{perfect} modules over a dg-category \( \cc, \) \( \perf(\cc) \) is the full subcategory of \( \der\cc \) spanned by retracts of objects of \( \cc^{tri}. \)
    
    The dg-category of \emph{proper} modules over \( \cc, \) \( \prop(\cc) \) is defined as the full subcategory of \( \der\cc \) spanned by those modules which land in \( \perf(R). \)
\end{definition}

\subsubsection{The Morita model structure}
There is another model structure on \( \dgcat, \) which, in some sense, is more natural than the quasi-equivalence model structure from the point of view of homotopy theory. 
\begin{theorem*}[\cite{Toen-Morita,Tabuada07}]
    \( \dgcat \) carries the structure of a (combinatorial) model category whose cofibrations are those of the quasi-equivalence model structure, and whose weak equivalences are those dg-functors \( F:\mathcal{C} \rightarrow \mathcal{D} \) which induce quasi-equivalences \( \perf(\mathcal{C}) \rightarrow \perf(\mathcal{D}). \) 
\end{theorem*}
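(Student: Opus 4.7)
The plan is to realize this model structure as a left Bousfield localization of the quasi-equivalence model structure on \( \dgcat \) established in the previous theorem. Since the cofibrations are declared to be preserved verbatim, the cofibration--trivial fibration factorization and the associated lifting axiom come for free, and the problem reduces to exhibiting a small set \( S \) of morphisms whose \( S \)-local equivalences coincide with the declared Morita equivalences (dg-functors \( F \) for which \( \perf(F) \) is a quasi-equivalence). Given such an \( S \), Smith's recognition theorem for left Bousfield localizations of combinatorial, left proper model categories supplies all of the remaining structure.

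For the set \( S \), the natural choice is the collection of Yoneda-type inclusions \( \mathcal{C}_{\alpha} \hookrightarrow \perf(\mathcal{C}_{\alpha}) \) as \( \mathcal{C}_{\alpha} \) ranges over a set of cofibrant generators of \( \dgcat \) (for instance, the dg-categories freely generated by finitely many objects together with the quasi-free interval). The key intermediate claim is that \( \mathcal{E} \in \dgcat \) is \( S \)-local if and only if its derived Yoneda embedding \( \mathcal{E} \to \perf(\mathcal{E}) \) is a quasi-equivalence, i.e.\ \( \mathcal{E} \) is Morita-complete. Granting this, mapping a candidate Morita equivalence \( F:\mathcal{C} \to \mathcal{D} \) into any such local \( \mathcal{E} \) factors through the quasi-equivalence \( \perf(F) \), which—after testing in particular against \( \mathcal{E} = \perf(\mathcal{D}) \)—identifies the \( S \)-local equivalences with Morita equivalences.

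The main obstacle is twofold: verifying left properness of the quasi-equivalence model structure (a prerequisite for Smith's theorem), and controlling the derived mapping spaces out of cofibrant objects well enough to characterize \( S \)-locality. Left properness is best attacked via an explicit description of cofibrations as iterated free extensions (using Drinfeld's quotient and relative quasi-free constructions), from which one checks directly that pushouts along cofibrations preserve quasi-isomorphisms of hom-complexes. The characterization of local objects in turn leans on the universal property of \( \perf(\cc) \) as the idempotent completion of the pretriangulated hull \( \cc^{tri} \) inside \( \der\cc \), translating \( S \)-locality into closure under shifts, cones, and retracts of representables; a small-object-style argument then pins down \( S \)-local equivalences as precisely those inducing equivalences after applying \( \perf \).
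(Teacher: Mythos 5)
The paper does not actually prove this statement: it is quoted as a cited result (to To\"en and Tabuada), and the remark immediately following it only explains how the stated form of the cofibrations and weak equivalences follows from To\"en's description of mapping spaces via quasi-representable bimodules and the fact that the derived Yoneda embedding $\cc \to \perf(\cc)$ is a fibrant replacement in the Morita model structure. So you should not expect your proposal to match a proof in the paper; you are proposing one from scratch.

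Your high-level plan---realize the Morita model structure as a left Bousfield localization of the quasi-equivalence model structure at the Yoneda maps $\cc \to \perf(\cc)$, and characterize local objects as Morita-complete dg-categories---is the morally correct one and is close in spirit to what Tabuada actually does. However, there is a genuine gap in the way you propose to get the localization off the ground: you invoke Smith's recognition theorem for left Bousfield localizations, which requires the base model category to be left proper, and you claim left properness of the quasi-equivalence model structure can be verified directly from an explicit description of cofibrations as iterated free extensions. This is false. The quasi-equivalence model structure on $\dgcat$ (like the Bergner model structure on simplicial categories that it mirrors) is right proper but \emph{not} left proper; pushouts along cofibrations generally fail to preserve quasi-equivalences on hom-complexes, essentially because freely adjoining a morphism to a dg-category can blow up composites in a way that does not commute with quasi-isomorphism on the other leg of the span. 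So the route through the standard form of Smith's theorem does not go through as written. The argument can be repaired either by constructing the Morita model structure directly via generating (trivial) cofibrations---this is Tabuada's actual approach in the cited reference---or by appealing to a version of left Bousfield localization valid for combinatorial (tractable) model categories without left properness, as in Barwick's work, at the cost of verifying the hypotheses of that more delicate theorem. Your intermediate characterization of $S$-local objects as those with $\mathcal{E} \to \perf(\mathcal{E})$ a quasi-equivalence, and the identification of the resulting $S$-local equivalences with Morita equivalences, is sound once one grants To\"en's identification of derived mapping spaces out of cofibrant dg-categories with nerves of quasi-representable bimodule categories; this is precisely the input the paper's remark isolates.
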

We call this the \emph{Morita} model structure on \( \dgcat. \) 

\begin{remark}
The above statement is difficult to find (with any accompanying proof) in these terms, so we give some further explanation: To\"en constructed the internal hom of dg-categories (with their monoidal structure) in \cite{Toen-Morita}. He described the internal hom objects in terms of certain (quasi-)representable bimodules, and showed that the (simplicial) nerve of this category computes the mapping spaces of the Dwyer--Kan localization of \( \dgcat. \) He also showed that the Yoneda embedding induces a quasi-equivalence between a dg-category \( \mathcal{C} \) and its category of quasi-representable modules. From this, it follows that the derived Yoneda embedding into \( \perf(\cc) \) is a Morita weak equivalence by definition, and moreover yields a fibrant replacement. After this replacement, idempotent completeness of the perfect modules implies that every quasi-representable bimodule over \( \perf(\cc) \) is induced by a genuine dg-functor, and so the above yields the expected morphisms on the level of homotopy categories.
\end{remark}

The main result of \cite{Cohn16} shows that nerve (in the sense of \cite{HA,HTT}) of \( \dgcat, \) equipped with the Morita model structure, is equivalent to a certain subcategory of stable \( \infty \)-categories, an equivalence which will be leveraged later. This will be explained more in \cref{sec:categorical-comparison}, and for now can be regarded as some loose justification of our comment at the top of this section.

As observed by Tabuada in \cite{Tabuada-dg-quotient}, Drinfeld's quotient is still the correct notion of localization when working in the Morita model structure on dg-categories:
\begin{theorem*}[\cite{Tabuada-dg-quotient}*{Theorem 4.0.5}]
    The Drinfeld quotient induces an isomorphism
    \[ \hom_{\dgcatmor}(\mathcal{A}/\mathcal{B},\mathcal{C}) \rightarrow \hom_{\dgcatmor}^{\mathcal{B}}(\mathcal{A},\mathcal{C}). \]
    As above, the second morphism set consists of those represented by bimodules which are acyclic on objects of \( \mathcal{B}. \)
\end{theorem*}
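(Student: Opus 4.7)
The plan is to reduce Tabuada's theorem to the Drinfeld quotient theorem stated immediately above by exploiting the fact that the Morita model structure on \(\dgcat\) is a left Bousfield localization of the quasi-equivalence model structure. The key technical input is that \(\perf(-)\) functions as a Morita-fibrant replacement: the derived Yoneda embedding \(\mathcal{C} \hookrightarrow \perf(\mathcal{C})\) is a Morita weak equivalence whose target is Morita-fibrant (the Morita-fibrant dg-categories being precisely the idempotent-complete pretriangulated ones, as in \cite{cohn-dgcats}). By the general theory of Bousfield localization, mapping into a fibrant object in the localized structure is computed in the original structure, yielding a natural bijection
\[ \hom_{\dgcatmor}(\mathcal{X},\mathcal{C}) \;\cong\; \hom_{[\dgcat^{qe}]}(\mathcal{X}, \perf(\mathcal{C})) \]
for any cofibrant \(\mathcal{X}\), functorially in \(\mathcal{X}\).

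Applying this with \(\mathcal{X} = \mathcal{A}/\mathcal{B}\), invoking Drinfeld's theorem, and then running the bijection in reverse with \(\mathcal{X} = \mathcal{A}\) produces the chain
\[ \hom_{\dgcatmor}(\mathcal{A}/\mathcal{B}, \mathcal{C}) \;\cong\; \hom_{[\dgcat^{qe}]}(\mathcal{A}/\mathcal{B},\perf(\mathcal{C})) \;\cong\; \hom_{[\dgcat^{qe}]}^{\mathcal{B}}(\mathcal{A}, \perf(\mathcal{C})) \;\cong\; \hom_{\dgcatmor}^{\mathcal{B}}(\mathcal{A}, \mathcal{C}), \]
modulo verifying that the ``killing \(\mathcal{B}\)'' condition appearing in Drinfeld's statement matches the ``bimodule acyclic on \(\mathcal{B}\)'' condition appearing in the Morita statement. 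A cofibrant replacement of \(\mathcal{A}\) can be taken if needed to make the reverse-direction bijection apply.

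The main step requiring care is precisely this translation: a dg-functor \(F \colon \mathcal{A} \to \perf(\mathcal{C})\) has the property that \(F(b)\) has acyclic endomorphism complex for every \(b \in \mathcal{B}\) if and only if the \((\mathcal{A},\mathcal{C})\)-bimodule representing the associated Morita morphism is acyclic on objects of \(\mathcal{B}\). One direction is trivial: an acyclic module has acyclic endomorphisms. For the converse, any \(x \in \perf(\mathcal{C})\) with acyclic \(\hom(x,x)\) satisfies \([\id_x] = 0\) in \(H^0\hom(x,x)\), so \(x\) is the zero object of the derived category, which forces the corresponding bimodule to be acyclic. Once this equivalence of ``nullity'' conditions is in hand, the four sets in the display above match consistently, and the theorem follows. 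I expect the only genuine subtleties to be those already implicit in setting up the Bousfield localization cleanly (cofibrant replacements, functoriality of the comparison map), rather than in any new geometric or homological input.
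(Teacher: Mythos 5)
The paper does not prove this statement at all: it is quoted verbatim from Tabuada's paper and cited as a black box, and the author even remarks immediately afterward that the result will not be used directly. So there is no ``paper's own proof'' to compare against; you have supplied an argument where the paper supplies a reference.

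That said, your sketch is a sound and essentially standard way to deduce the Morita version from the quasi-equivalence version, and it tracks closely with how the result is typically proved in the literature. The key inputs are correct: the Morita model structure \emph{is} a left Bousfield localization of the quasi-equivalence model structure (Tabuada, To\"en), its fibrant objects are exactly the idempotent-complete pretriangulated dg-categories, and the derived Yoneda embedding \( \mathcal{C} \hookrightarrow \perf(\mathcal{C}) \) is a Morita weak equivalence onto such an object, so \( \perf(\mathcal{C}) \) is a Morita-fibrant replacement. The general Bousfield-localization fact you invoke (mapping a cofibrant object into a local fibrant object gives the same homotopy mapping space in both structures) is also correct, and since every object of \( \dgcat \) is fibrant in the quasi-equivalence structure, the chain of bijections goes through. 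Your identification of the two ``kills \( \mathcal{B} \)'' conditions is the right thing to check and your argument for it works: \( \hom(x,x) \) acyclic forces \( [\id_x] = 0 \) in \( H^0 \), hence \( x \cong 0 \) in the triangulated homotopy category of \( \perf(\mathcal{C}) \), which in turn is equivalent to the associated bimodule being acyclic at \( x \); the converse is immediate.

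Two small technical points you are slightly glib about. First, you need \( \mathcal{A}/\mathcal{B} \) to be cofibrant for the fibrant-replacement computation to yield the hom-set in the homotopy category directly; this holds if \( \mathcal{A} \) is cofibrant (Drinfeld's construction is built from free adjunction of morphisms), but you should either assume it or replace first, and then observe the result is invariant under cofibrant replacement. Second, the claim that the composite bijection is ``the map induced by the Drinfeld quotient'' requires the square built from the Bousfield comparison and precomposition with \( \mathcal{A} \rightarrow \mathcal{A}/\mathcal{B} \) to commute; you flag this as ``functorially in \( \mathcal{X} \)'' which is the right remark, but it is worth saying explicitly that naturality of the Bousfield isomorphism in the cofibrant variable is what makes the stated map the isomorphism, not just some abstract bijection between the two sets.
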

We will not make direct use of this result, but it is perhaps nice to keep in mind when considering \( \infty \)-categorical quotients later.

\subsubsection{Periodic dg-categories}\label{sec:periodic-dg}
Later, we would like to work with objects that carry only a \( \Z/2 \) grading, rather than a \( \Z \) grading, as the dg-categories we have considered until now do. The remedy for this is as follows: Given a (honest) commutative ring \( R, \) let \( R_2 := R[u,u^{-1}] \) denote the ring of graded Laurent polynomials over \( R, \) with the convention that \( u \) lies in degree 2. 
\begin{definition}\label{def:2-periodic-dg}
    A 2-periodic dg-category over \( R \) is a dg-category over \( R_2. \)
\end{definition}
We have been deliberately agnostic to our coefficients thus far so much of what we have set up above may be applied directly. In particular, we are principally concerned with passing between the category of dg-categories, and some category of \( \infty \)-categories, and thus with obtaining a model for the morphism spaces of the Dwyer--Kan simplicial localization (\cite{DwyerKan80}) of \( \dgcat_{R_2} \) along its quasi-equivalences. It is shown in \cite{Dyckerhoff11}*{\S 5} that the constructions of \cite{Toen-Morita} can be carried over to the 2-periodic setting, and thus that the Morita model structure is well defined, and still realizes Dwyer--Kan localization in this generality. Similarly, Drinfeld's quotient can be imported directly. This suffices to show the desired equivalence, as we will discuss further in \cref{sec:categorical-comparison}.

\subsubsection{Hereditary categories}\label{sec:hereditary-cats}
Given a commutative ring \( R \) concentrated in degree 0, we may regard an \( R \)-linear abelian category \(\mathcal{C}\) as an \( R \) dg-category with morphisms concentrated in degree 0. The dg-derived category \( \der\cc \) then agrees with the usual notion of derived category of an abelian category. By tensoring with \( R_2 \) before passing to the derived category, we similarly obtain a dg-model for the 2-periodic derived category of \( \cc. \) There is a convenient criterion for determining when this operation is purely formal:
\begin{definition}
    An \( R \)-linear abelian category \( \mathcal{C} \) is called \textit{hereditary} if the (bi-)functors \( \ext^k(-,-) \) all vanish for \( k \geq 2. \)
\end{definition}
Note that this is equivalent to the full subcategory spanned by the image of \( \cc \) in the derived category \( \der\cc \) having morphisms concentrated in degrees 0 and 1. 

Hereditary categories have the property that objects of \( \perf(\cc) \) (resp. \( \perf_2(\cc) \)) are isomorphic to their cohomology, i.e. to a finite direct sum of shifts of objects of \( \cc \). The proof of this fact is a standard lifting argument:
\begin{lemma}\label{lem:periodic-inheritance}
    Let \( \mathcal{C} \) be a hereditary category, \( A^{\bullet} \) a 2-periodic \( \mathcal{C} \) complex with perfect endomorphisms. Then \( A^{\bullet} \) is isomorphic to it's cohomology in \( \der\mathcal{C}. \) In particular, \( A^{\bullet} \) is quasi-isomorphic to a finite direct sum of shifts of objects of \( \mathcal{C}. \)
\end{lemma}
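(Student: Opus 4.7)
The plan is to produce a quasi-isomorphism
\[
\phi : H^0(A^\bullet) \oplus H^1(A^\bullet)[-1] \longrightarrow A^\bullet
\]
in $\der\cc$ by lifting the identity morphisms on $H^0(A^\bullet)$ and $H^1(A^\bullet)$ through the edge maps of a hyper-$\ext$ spectral sequence. Concretely, for any $X \in \cc$ viewed as a 2-periodic complex concentrated in degree $0$, resolving $X$ by a projective (or injective) resolution in $\cc$ -- which, because $\cc$ is hereditary, can be taken to have at most two terms -- and examining the resulting double complex yields a spectral sequence
\[
E_2^{p,q} = \ext^p_{\cc}(X, H^q(A^\bullet)) \Longrightarrow \hom^{p+q}_{\der\cc}(X, A^\bullet), \quad p \in \Z_{\ge 0},\ q \in \Z/2.
\]

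Hereditariness forces $E_2^{p,q} = 0$ for $p \ge 2$, so the $E_2$ page is concentrated in two columns, and every differential $d_r$ with $r \ge 2$ has target in a column of index $\ge 2$ and hence vanishes identically. The spectral sequence therefore degenerates at $E_2$ and collapses to short exact sequences
\[
0 \to \ext^1_{\cc}(X, H^{n-1}(A^\bullet)) \to \hom^n_{\der\cc}(X, A^\bullet) \to \hom_{\cc}(X, H^n(A^\bullet)) \to 0
\]
for each $n \in \Z/2$; in particular the edge maps to $\hom_{\cc}(X, H^n(A^\bullet))$ are surjective.

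Applying this surjectivity with $X = H^0(A^\bullet)$ and $n = 0$ yields a morphism $\phi_0 : H^0(A^\bullet) \to A^\bullet$ in $\der\cc$ inducing the identity on $H^0$; applying it with $X = H^1(A^\bullet)$, $n = 1$ and using the tautological identification $\hom^1(H^1(A^\bullet), A^\bullet) \cong \hom^0(H^1(A^\bullet)[-1], A^\bullet)$ yields $\phi_1 : H^1(A^\bullet)[-1] \to A^\bullet$ inducing the identity on $H^1$. Their sum $\phi := \phi_0 \oplus \phi_1$ then induces the identity on both $H^0$ and $H^1$, and since these are the only cohomology objects of a 2-periodic complex (all others being determined by periodicity), $\phi$ is a quasi-isomorphism. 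The perfect-endomorphism hypothesis plays a subsidiary role, ensuring the cohomology objects $H^i(A^\bullet)$ are compact enough that ``finite direct sum of shifts of objects of $\cc$'' carries its intended meaning for downstream use.

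The main technical point is rigorously setting up the hyper-$\ext$ spectral sequence in the 2-periodic derived category and verifying its convergence; classical boundedness arguments require some adaptation because shift-by-2 is trivial here. However, the hereditary hypothesis confines the $E_2$ page to the two columns $p \in \{0,1\}$, leaving no room for nontrivial $d_r$ differentials with $r \ge 2$, so the spectral sequence collapses to a two-step filtration whose edge map is manifestly surjective and both convergence and degeneration are settled automatically. A more hands-on alternative would construct $\phi_0$ and $\phi_1$ directly at the chain level, lifting the identity morphisms on $H^0(A^\bullet)$ and $H^1(A^\bullet)$ through explicit 2-term projective resolutions in $\cc$ and bypassing the spectral sequence entirely.
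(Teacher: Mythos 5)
The paper states this lemma without proof, remarking only that it ``is a standard lifting argument.'' Your spectral-sequence proof is a correct formalization of exactly that argument: the hyper-$\ext$ spectral sequence degenerates because hereditariness confines the $E_2$ page to the columns $p\in\{0,1\}$, and the surjective edge maps let you lift $\id_{H^0(A^\bullet)}$ and $\id_{H^1(A^\bullet)}$ to the morphisms $\phi_0,\phi_1$; their sum induces the identity on both $\Z/2$-graded cohomology objects (the cross-terms vanish automatically, since $H^1(\phi_0)$ and $H^0(\phi_1)$ have zero source), hence is a quasi-isomorphism. Your convergence worry is handled correctly: boundedness comes from the length-two resolution of $X$, i.e.\ the filtration is in the $p$-direction, so 2-periodicity in the $q$-direction never interferes. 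The ``hands-on alternative'' you sketch at the end -- lifting the identities through an explicit 2-term resolution at the chain level -- is the unrolled version of the same argument, and is presumably what the paper has in mind by ``standard lifting argument.'' You are also right that the perfect-endomorphism hypothesis plays no role in constructing the quasi-isomorphism; it is there so that the resulting decomposition lands in $\perf_2(\cc)$ and can subsequently be refined (via \cref{prop:semi-simple-circles}) into a finite sum of indecomposables in the application, \cref{cor:circular-decomposition}.
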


\subsection{\( \infty \)-Categories}\label{sec:infty-cats}
We now recall some general facts from the theory of \( \infty \)-categories that will be useful later. Throughout we will work in the setup of \cite{HTT,HA}. 

Our principal use for \( \infty \)-categories is as a convenient world to carry out homotopical constructions. In particular, we will later be concerned with sheaves of dg-categories, and our main theorem is proved by constructing objects of global sections glued back from a cover. In the process of this construction, one generally needs to make many choices, and verify that those choices are coherent (up to all higher homotopies). In the dg-setup, this quickly becomes quite laborious, as many replacements must be taken (according to the model structure) in order to, for example, invert equivalences. It is not a-priori obvious that certain constructions one wishes to perform are well defined (or even make sense). The \( \infty \)-categorical viewpoint clarifies many of these points, and provides a convenient language (and toolbox) for discussing the existence (and homotopical uniqueness) of coherent choices.

Throughout we will work with presentable, stable, symmetric monoidal \( \infty \)-categories. We leave the definitions of these terms to \cite{HTT,HA}, since they appear only as language in which to state the results of others justifying our free passage between the dg and \( \infty \)-categorical world, since all of our hands-on work will be undertaken in the dg-setting.

\subsubsection{Commas}
We will have two distinct use cases for \( \infty \)-categorical constructions. The first is in guaranteeing homotopy coherent replacements of general objects by those with a more convenient presentation. The precise setup is as follows: 

Let \( \mathcal{C} \) be a \( \infty \)-category, and \( \iota:\mathcal{D} \hookrightarrow \mathcal{C} \) be a full subcategory such that the induced inclusion on homotopy categories \( h\mathcal{D} \rightarrow h\mathcal{C} \) is an equivalence of category. 

Now let \( I \) denote the undirected interval object in \( \icat, \) i.e. the nerve of the ordinary category with two objects, and exactly one morphism between each pair of objects (necessarily all isomorphisms), and \( \Delta^1 \) the (nerve of the) 1-simplex. The \emph{iso-comma category} \( (\mathcal{C}\sim \mathcal{D}) \) is defined to be the limit in \( \icat \):
\begin{center}
    \begin{tikzpicture}
        \node (a) at (0,2) {$(\mathcal{C}\sim\mathcal{D})$};
        \node (b) at (3,2) {$\fun(I,\mathcal{C})$};
        \node (c) at (0,0) {$\mathcal{C} \times \mathcal{D}$};
        \node (d) at (3,0) {\( \mathcal{C} \times \mathcal{C} \)};
        \draw[->] (a) -- (b);
        \draw[->] (a) -- (c);
        \draw[->] (b) -- node[right] {$(F(0),F(1))$} (d);
        \draw[->] (c) -- node[above] {$(\id,\iota)$} (d);
    \end{tikzpicture}
\end{center}
On a 1-categorical level, this can be regarded as the collection of isomorphisms pairs \( c \rightarrow d \rightarrow c \) which compose to identity. 1-morphisms are then given by pairs of arrows \( c\rightarrow c',d\rightarrow d' \) such that the induced diagram commutes. 

Now consider the map \( (\mathcal{C} \sim \mathcal{D}) \rightarrow \mathcal{C} \) given by composing the upper right corner of the square with the projection \( \mathcal{C} \times \mathcal{C} \rightarrow \mathcal{C} \) to the first factor. Expanding upon the 1-categorical description above, we see that this projection is a Kan fibration, and an equivalence, since we require that the arrows \( c\rightarrow d \) are equivalences, and the inclusion \( \iota \) induce equivalence on homotopy categories. In particular, this is an acyclic Kan fibration, and thus has non-empty, contractible space of sections (equivalently, the space of homotopy inverses to the inclusion \( \iota \) is non-empty and contractible). We will make use of this construction later in \cref{sec:essential-rulings}.

\subsubsection{Sheaves}\label{sec:infty-sheaves}
The other direct use is as a setting for discussing sheaves. We will consider sheaves in \( \icat \) on the open site of some topological space. All such spaces which appear in the present article are paracompact (in particular, they may be regarded as embedded in some finite dimensional manifold), and so we do not distinguish between sheaves and hypersheaves (\cite{HTT}). The essential facts that we will make use of are the existence of a good notion of sheafification for pre-sheaves of \( \infty \)-categories, and six functor formalism (\cite{HTT}). We will use the fact that sheaves in \( \icat \) satisfy \v{C}ech descent (by definition), and that isomorphisms may be checked on stalks. 

Let us also record here a convenient fact (\cite{NadlerShende-weinstein}*{\S6}): Suppose \( \mathcal{F} \) is a pre-sheaf in \( \icat \) on some paracompact space \( X \), whose stalks are presentable, stable, and computed by some local cofinal sequences whose restrictions stabilize to identity. Then the sheafification \( \mathcal{F}^+ \) takes values in the full subcategory of presentable \( \infty \)-categories. Moreover, if \( \mathcal{F} \) takes value in the stable \( \infty \)-categories (and restriction is exact) the sheafification is also stable, and agrees with the sheafification in the \( \infty \)-category of presentable stable \( \infty \)-categories. 

\subsection{From dg-categories to \( \infty \)-Categories}\label{sec:categorical-comparison}
We now conclude the discussion we started in \cref{sec:periodic-dg}. What remains is the introduce the \( \infty \)-categorical object that lives on the other side of the equivalence of \cite{Cohn16}, and argue that the equivalence extends to dg-categories over a graded ring.

As discussed above, the Morita model structure can be constructed in the 2-periodic dg-setting, and its still realizes the Dwyer--Kan localization of \( \dgcat_{R_2}. \) The other side of our equivalence is given by the \(\infty\)-category of \( R_2 \)-linear presentable stable \( \infty \)-categories.

For a general \( E_{\infty} \) ring spectrum \( S, \) these are, by definition, the modules in presentable stable \( \infty \)-categories over the category of \( S \)-modules, i.e. presentable, stable \( \infty \)-categories tensored over \( S-\mods \) (in the sense of \cite{HA}). Now \cite{HA}*{Theorem 7.1.2.13} shows that the unbounded derived (\(\infty\)-)category of the graded ring \( R_2 \) is naturally equivalent to the category of module spectra over the periodic ring spectrum \( HR_2 := \bigvee_{n\in\Z} \sigma^{2n}HR \) (\(HR\) denotes the Eilenberg-MacLane spectrum associated to \( R. \)). This induces a monoidal equivalence 
\[ \der R_{2}-\mods \cong HR_2-\mods, \]
and thus
\[ \icat^{st,pr}_{\der R_{2}-\mods} \cong \icat^{st,pr}_{HR_2-\mods} \]
of the associated categories of linear categories. Then the main theorem of \cite{Haugseng-rectification} shows that \( \icat^{st,pr}_{\der R_{2}-\mods} \) is exactly (the nerve of) the category of \( R_2 \) dg-categories, i.e. the Dwyer--Kan localization of \( \dgcat_{R_2}. \) In particular, homotopy limits (thus sheaves) in \( \dgcatmor_{R_2} \) are induced by limits in the \( \infty \)-category of presentable, stable, \( HR_2 \)-linear \( \infty \)-categories, thus (under the cofinality condition described in \cref{sec:infty-sheaves}) \( \icat. \) As such we will treat the two settings as interchangeable; computations are done in the dg-setting, and the results are fed into \( \infty \)-categorical descent machinery in order to obtain our main technical results.

\section{Microsheaves}\label{sec:microsheaves}
The invariants we will use to detect non-squeezing phenomena in high dimensions are some variants of the category of \emph{microsheaves} supported on a Legendrian submanifold of a contact manifold \( \Lambda \subset (Y,\xi) \). We will take the approach of \cite{NadlerShende-weinstein} to the general definition of microsheaves on a Weinstein manifold, and recall here some generalities regarding the construction, and the properties we will make use of later.

For concreteness, we fix a field \( \kk \), and let \( \mathcal{C} := \vect_{\kk_2} \) denote the dg-derived (\( \infty \)-)category of 2-periodic complexes of \( \kk \)-vector spaces (for much of this section any symmetric monoidal stable presentable \( \infty \)-category will do).

Associated to a smooth manifold \( M \) is the (\( \infty \)-)category \( \sh(M) \) of \( \cc \) sheaves on \( M. \) Many properties of \( \sh(M) \) are developed in \cite{KashiwaraSchapira} in the setting where \( \mathcal{C} \) is the \emph{bounded} (classical) derived category of a commutative ring. These can (mostly\footnote{As remarked in \cite{NadlerShende-weinstein}*{\S 6}, there are situations in which the unbounded setting is genuinely different, but they do not appear in the present context.}) be carried over directly to the unbounded, dg-derived setting via the machinery of \cite{Spaltenstein-unbounded} to pass to the unbounded case, using the fact that the universal property characterizing Drinfeld's quotient readily implies that the homotopy category of the dg-derived category of a ring is precisely the classical derived category in the sense of \cite{KashiwaraSchapira}. A proof of the essential lemma (and some deeper discussion of this extension) can be found in \cite{RobaloSchapira-lemma}.

Accordingly, an object \( \mathcal{F} \in \sh(M) \) has a notion of \emph{microsupport} (see \cite{KashiwaraSchapira}*{Chapter 5}) in the cotangent bundle to \( M, \) defined as the (closure of the) set of \( (q,p) \in T^*M \) such that there exists a neighborhood \( U \) of \( q, \) and a smooth function \( f:U \rightarrow \R, \) with \( f(q) = 0, \) \( df(q) = p \) such that the mapping cone of the restriction
\[ \mathcal{F}(f^{-1}(\R_{<\epsilon})) \rightarrow \mathcal{F}(f^{-1}(\R_{<-\epsilon})) \]
is homologically non-trivial for all \( \epsilon > 0. \) As a consequence of the definition, the microsupport is always conical (with respect to the standard Liouville structure on \( T^*M \)), and by \cite{KashiwaraSchapira} it is also a co-isotropic subset. Given a sheaf \( \mathcal{F} \in \sh(M) \) we will denote it's microsupport by \( \musupp(\mathcal{F}). \) Note that if \( \mathcal{F} \) is locally constant, then \( \musupp(\mathcal{F}) = \supp(\mathcal{F}) \subset M \) is contained in the 0-section of \( T^*M. \)

Given a closed subset \( K \subset T^*M \) there is a full subcategory of \( \sh(M) \) defined as the category of sheaves with microsupport contained in \( K. \) We denote this by \( \sh_{K}(M). \)

The situation we will primarily be concerned with is the case where \( K \) is conic Lagrangian, smooth away from positive codimension. In this case, at smooth points of \( K, \) the cone defining microsupport can (up to a shift) be functorially associated to a sheaf \( \mathcal{F} \) microsupported in \( K \) (\cite{KashiwaraSchapira}*{Proposition 7.5.3}). Following \cite{GPS3} we call this object the \emph{microstalk} of \( \mathcal{F} \) at \( (q,p). \) When \( p = 0, \) the shift vanishes, and the microstalk simply recovers the usual stalk of \( \mathcal{F} \) at \( q. \)

The key insight relevant to the construction at hand is that the category of sheaves on \( M \) itself sheafifies (``microlocalizes'') over \( T^*M: \) Given an open \( U \subset T^*M \) there is an \( \infty \)-category
\[ \mush^{pre}(U) := \sh(M)/\sh_{M\setminus U}(M). \]
Here (and throughout) the quotient denotes \( \infty \)-categorical localization, equivalently Drinfeld's quotient when \( \cc \) is the derived category of some commutative ring. By the properties of the quotient, if \( V \subset U \) there is a restriction (defined by further localization) \( \mush^{pre}(U) \rightarrow \mush^{pre}(V). \) In particular \( \mush^{pre} \) defines a pre-sheaf of (stable) \( \infty \)-categories on \( T^*M. \)

\begin{remark}
    We could perform the same construction in the restricted microsupport setting, i.e. fixing a priori a closed subset \( K \). In the case where \( K \) is a Lagrangian conic, the categories involved are also presentable.
\end{remark}

Given such an object, we can form its sheafification in \( \icat \)\footnote{As discussed in \cref{sec:infty-sheaves}, in the case of restriction to conic Lagrangian microsupport all categories involved become presentable, and so all reasonable notions of sheafification agree. See \cite{NadlerShende-weinstein}*{Remark 6.1} for additional discussion.}, and obtain a genuine sheaf of stable \(\infty\)-categories which we denote by \( \mush. \) 

The term ``microlocalization'' of \( \sh(M) \) over \( T^*M \) is justified by the results of \cite{KashiwaraSchapira}, where it is shown that \( \mush(T^*M) \cong \sh(M). \)\footnote{Strictly speaking these objects are not discussed in \cite{KashiwaraSchapira}, only the stalks of the pre-sheaf \( \mush^{pre} \) appear. Nevertheless, the arguments contained therein suffice to prove this (cf. \cite{NadlerShende-weinstein}*{\S7.1}).} 

Note that, by conicity of the microsupport, \( \mush \) is more intrinsically defined with respect to the conic topology on \( T^*M \) (i.e. that generated by all Liouville invariant opens). Eventually, our constructions will take place primarily in the cosphere bundle \( S^*M, \) where \( \mush \) is also sensible (and the usual topology is induced by the conic topology, corresponding to conic open subsets disjoint from the 0-section). This restriction is also denoted by \( \mush. \)

Given an open (without loss of generality conic) \( U \subset T^*M \) the objects of the category \( \mush(U) \) have a well defined notion of microsupport detected by non-triviality in the stalk at a point \( (x,\xi) \in U. \) In fact, it follows from the construction of the \( \muhom \) functor (\cite{KashiwaraSchapira}), and the comparison therein with the homs of the stalks of \( \mush^{pre}, \) thus \( \mush, \) that this is inherited directly from the notion of microsupport in \( \sh(M). \) 

As a consequence, given a closed, conic coisotropic \( K \subset T^*M \) (or simply a closed co-isotropic in \( S^*M \)) we can speak of the subsheaf \( \mush_K \) whose value on an open set \( U \) consists of the full subcategory of \( \mush(U) \) of objects microsupported in \( K \cap U. \) In fact, this object localizes completely to \( K \): \( \mush_K \) is isomorphic to the pushforward of a sheaf of categories on \( K. \) As before, if \( (q,p) \in U \) is a smooth conic Lagrangian (resp. Legendrian) point of \( K, \) \( \mathcal{F} \in \ob(\mush_{K}(U)) \) we can functorially associate a microstalk to \( \mathcal{F}. \) 

The essential property that makes \( \mush_K \) a useful object for our purposes is contact invariance: Let \( K \subset S^*M \) be closed, \( \phi_t:S^*M \rightarrow S^*M \) be a contact isotopy. Such an isotopy (regarded as conic on \( T^*M \)) induces a Lagrangian in \( T^*M\times T^*M \times T^*\R, \) whose projection to the second two factors yields a map from subsets of \( T^*M \) to subsets of \( T^*M\times \R. \) Applying conicity, we obtain a similar map from subsets of \( S^*M \) to those of \( S^*M \times T^*\R. \) In each case this map takes \( K \) to a subset \( \Phi(K) \) (which we call the \emph{trace} of \( K \) under \( \phi_t \)) with the property that symplectic (coisotropic) reduction of the fiber of \( \Phi(K) \) over \( t \in \R \) is exactly \( \phi_t(K). \)
\begin{proposition}[\cite{NadlerShende-weinstein}*{Lemma 6.6}]
    Suppose \( K \subset S^*M \) closed, and \( \phi_t:S^*M \rightarrow S^*M \) is a contact isotopy. Then there is a canonical isomorphism induced by pushforward
    \[ \left(\phi_t\right)_*\mush_{K} \cong \mush_{\phi_t(K)}. \]
\end{proposition}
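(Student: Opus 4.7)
The plan is to quantize the contact isotopy $\phi_t$ by a sheaf kernel via Guillermou--Kashiwara--Schapira, then transport this kernel through the construction of $\mush$. First I would apply the GKS theorem to produce a sheaf kernel $\mathcal{K}_\phi \in \sh(M \times M \times I)$ whose microsupport is the trace Lagrangian of the conification of $\phi_t$ in $T^*M$. Convolution against the restriction $\mathcal{K}_{\phi,t} := \mathcal{K}_\phi|_{M \times M \times \{t\}}$ gives an autoequivalence
\[
\Phi_t : \sh(M) \xrightarrow{\sim} \sh(M)
\]
whose defining property is $\musupp(\Phi_t \mathcal{F}) = \phi_t(\musupp(\mathcal{F}))$, with inverse supplied by the kernel for the reverse isotopy.

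Next I would descend $\Phi_t$ through the presheaf $\mush^{pre}$. Because $\Phi_t$ shifts microsupports by $\phi_t$, it sends $\sh_{M \setminus U}(M)$ to $\sh_{M \setminus \phi_t(U)}(M)$, so the universal property of the Drinfeld/$\infty$-categorical quotient produces an equivalence
\[
\Phi_t : \mush^{pre}(U) \xrightarrow{\sim} \mush^{pre}(\phi_t(U)),
\]
which is compatible with restrictions since convolution is compatible with further localization. Assembled over $U \subset S^*M$, this gives a morphism $(\phi_t)_* \mush^{pre} \to \mush^{pre}$ of presheaves which is a termwise equivalence; sheafification, being functorial in equivalences of presheaves, then produces the desired isomorphism $(\phi_t)_* \mush \xrightarrow{\sim} \mush$ on $S^*M$.

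Finally I would restrict to the full subsheaves of objects with prescribed microsupport. The compatibility of microsupport in $\mush$ with microsupport in $\sh$ (used in the excerpt to define $\mush_K$) guarantees that $\Phi_t$ carries objects microsupported in $K$ to objects microsupported in $\phi_t(K)$, and vice versa via the inverse kernel. The upshot is the claimed canonical isomorphism
\[
(\phi_t)_* \mush_K \cong \mush_{\phi_t(K)}.
\]

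The main obstacle is less conceptual than technical: one must ensure the GKS quantization exists in this generality (a standard issue of compact support / tameness of $\phi_t$, handled by cutting off outside the region of interest), and one must verify \emph{canonicity} rather than merely existence, since the GKS kernel is unique only up to canonical isomorphism. The compatibility of sheafification with the microsupport restriction, which makes the final step legitimate, is the key technical input and is precisely the content of the pre-sheaf-to-sheaf comparison recorded in \cite{Nadler-Shende}*{\S6}.
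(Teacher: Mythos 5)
The paper does not prove this proposition itself; it cites it to Nadler--Shende (Lemma 6.6) and then adds the one-line gloss that it ``is a consequence of a general stabilization principle,'' namely local constancy of $\mush$ along product directions (the subsequent Lemma 6.3). That route runs through the trace Lagrangian $\Phi(K) \subset S^*M \times T^*\R$ described in the surrounding paragraph: one observes that the germ of $\Phi(K)$ near any time slice is contactomorphic to a product $\phi_{t_0}(K) \times \R$, applies local constancy to conclude $\mush_{\Phi(K)}$ is a locally constant sheaf of categories over the $\R$ factor, and identifies its fibers with $\mush_{\phi_t(K)}$ by coisotropic reduction. Your proposal instead quantizes the isotopy directly \`a la Guillermou--Kashiwara--Schapira, producing a global autoequivalence of $\sh(M)$, descending it through the Drinfeld/$\infty$-categorical quotient defining $\mush^{pre}$, and then sheafifying. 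Both routes are sound, and they buy different things: the GKS-kernel approach gives an explicit global functor on $\sh(M)$ realizing the equivalence, at the cost of having to cut off the isotopy to ensure the kernel exists and of carefully checking compatibility with the microsupport-restriction at the presheaf level; the trace-plus-stabilization approach is entirely local-microlocal and sidesteps kernel existence, but hides the global functor and still leans on a quantization input in the proof of the stabilization lemma itself. Your argument is correct as sketched, but you should be explicit that it is not the argument the present paper is invoking -- and you should also note, as you hint at, that the passage from the full $\mush$ to the full subsheaf $\mush_K$ requires that restricting microsupport commute with sheafification, which is safe because the microsupport condition on objects of $\mush$ is local on the base.
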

This is a consequence of a general stabilization principle: 
\begin{proposition}[\cite{NadlerShende-weinstein}*{Lemma 6.3}]
    Suppose (the germ of) \( K \subset S^*M \) is contactomorphic to the germ of some \( k \times N \subset U \times T^*N \) (\( U \) open, contact). Then (under this contactomorphism) \( \mush_K \) is locally constant along \( N. \)
\end{proposition}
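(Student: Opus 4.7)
The plan is to reduce the claim to a translation-equivariance statement and apply the contact-invariance principle just established. Since the question is local in $N$, I would first pass to a Darboux-like chart and assume without loss of generality that $N$ is an open subset of $\mathbb{R}^d$ sitting in $T^*N \cong T^*\mathbb{R}^d$ as the zero section; under the given contactomorphism we then work directly with $K = k \times N$ inside $U \times T^*N$.

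Next, I would use translations along $N$ to produce a large family of contact symmetries preserving $K$. For each sufficiently small $v \in \mathbb{R}^d$, the cotangent lift of translation by $v$ on $N$ is the map $(q,p) \mapsto (q+v,p)$ on $T^*N$, which preserves the zero section; extending by the identity on the $U$-factor (and cutting off in the fiber direction, if required, to produce a compactly supported contact Hamiltonian on $U \times T^*N$) yields a contact isotopy $\Phi_v$ of $U \times T^*N$ that preserves $k \times N$ as a set. Applying the contact-invariance proposition to $\Phi_v$ produces a canonical isomorphism
\[ (\Phi_v)_* \mush_{k \times N} \cong \mush_{k \times N}, \]
covering the self-map of $k \times N$ that is the identity on $k$ and translation by $v$ on $N$.

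Finally, I would conclude local constancy along $N$ from this translation equivariance. Concretely, for any inclusion $V \times W' \hookrightarrow V \times W$ with $V \subset k$ open and $W' \subset W \subset N$ convex opens, one can choose $v$ that translates $W'$ into a slice of $W$ and use the canonical isomorphism above to identify the restriction map $\mush_{k \times N}(V \times W) \to \mush_{k \times N}(V \times W')$ with a restriction between two copies of the same slice of $\mush_{k \times N}$, which is an equivalence. This forces $\mush_{k \times N}$ to be a pullback along the projection $k \times N \to k$, that is, locally constant along $N$. The main obstacle I expect is coherence in this last step: verifying that the canonical isomorphisms supplied by the contact-invariance proposition compose correctly (up to the higher homotopies implicit in the $\infty$-sheaf formalism of \cref{sec:infty-sheaves}) to yield a genuine homotopy-coherent $\mathbb{R}^d$-action rather than a merely set-theoretic equivariance; this should reduce to functoriality of the trace construction under composition of contact isotopies, which is built into the proof of the preceding proposition.
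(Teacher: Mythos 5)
This proposition is cited from \cite{Nadler-Shende}*{Lemma 6.3} rather than proved in the present paper, so the comparison here is against the Nadler--Shende proof. Their argument is short and direct: after reducing to $N = \R^d$ and choosing an ambient $M$ with $U \hookrightarrow S^*M$, one computes $\mush_{k\times N}$ using sheaves on $M \times \R^d$ whose microsupport lies in $(\text{cone on } k) \times 0_{\R^d}$; the vanishing of microsupport in the $T^*\R^d$ fiber directions is then, essentially by definition (via the Kashiwara--Schapira propagation/noncharacteristic-deformation results), the condition of being a locally constant family along $\R^d$. So the paper's source gets local constancy \emph{directly} from the microsupport estimate, without invoking invariance under contact isotopies at all.

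Your proposal takes a genuinely different route, attempting to derive local constancy from translation equivariance plus the preceding contact-invariance proposition. The idea is appealing in spirit, and you correctly anticipate the coherence subtlety, but there is a more basic gap in the final step. The canonical isomorphism $(\Phi_v)_*\mush_{k\times N} \cong \mush_{k\times N}$, evaluated on an open $W''$, identifies $\mush_{k\times N}(\Phi_v^{-1}(W''))$ with $\mush_{k\times N}(W'')$. This says that translated opens have equivalent categories of sections, but it does \emph{not} say the restriction map $\mush_{k\times N}(V\times W) \to \mush_{k\times N}(V\times W')$ for $W' \subset W$ is an equivalence. Translation equivariance alone is far too weak to conclude local constancy --- the sheaf of smooth functions on $\R^d$ is translation-equivariant and not locally constant. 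Your phrase ``identify the restriction map \ldots\ with a restriction between two copies of the same slice'' does not actually produce a commuting square relating the restriction from $W$ with the equivalences furnished by $\Phi_v$: for $W' \subset W$ and $W'' := \Phi_v(W') \subset W$, you obtain an equivalence $\mush(V\times W') \simeq \mush(V\times W'')$, but nothing yet forces this equivalence to intertwine the two restriction maps out of $\mush(V\times W)$. To close this, one needs to know that sections propagate along the $N$-direction --- i.e.\ a noncharacteristic deformation argument controlled by the microsupport estimate --- and at that point you have re-derived (a local version of) the direct KS-style proof, making the translation-invariance detour unnecessary. In short: the translation symmetry is a true and useful observation, but it does not by itself yield local constancy; the content of the lemma really is the KS propagation statement, and any proof must use it (or re-prove it) somewhere.
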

In particular, when \( N = \R^n \) is contractible, \( \mush_K\res{k \times \set*{*}} \) determines \( \mush_K \) by pullback.  Equivalently, microsheaves are locally determined by the corresponding category over a transverse coisotropic reduction of \( K. \) 

\subsection{Microsheaves on stably polarized Weinstein manifolds}
To now we have discussed microsheaves as an object associated to the cotangent (or cosphere) bundle of a smooth manifold, but we would like to work in the more general setting of (stably polarized) Weinstein symplectic manifolds. This passage has been accomplished in \cite{NadlerShende-weinstein,Shende-hprinciple}, and we recall here the construction, and results which will be of relevance to us later. 

Let \( (W,d\lambda) \) be a Weinstein manifold, and let \( \tau \) be a (stable) polarization of \( W. \) As observed in \cite{Shende-hprinciple} Gromov's h-principle for open symplectic embeddings provides an abundance of \emph{exact} embeddings of \( W \) into \( S^*\R^{N} \) for large \( N, \) the space of which can be made arbitrarily connected by further increasing \( N \) (in particular, any pair of embeddings may be assumed to be isotopic). Exactness of the embedding means that we ask for the ambient contact form to restrict to the chosen Liouville form for \( W. \) In particular, we can extend to a contact embedding of the \( \R \) contactization of \( W. \)

The data of the stable polarization now enters as a means of \emph{thickening} this high codimension embedding of the contactization: The data of a stable polarization of \( W \) is equivalent to the data of a section of the Lagrangian Grassmannian of the \emph{symplectic normal bundle} to the associated contactization (see \cite{NadlerShende-weinstein}*{Lemma 9.1}), which is precisely a section of the Lagrangian Grassmannian of the normal bundle to our contact embedding of \( W\times \R. \)

Now to a Legendrian \( \Lambda \subset W\times \R \) we can associate a canonical local Legendrian thickening \( \Lambda^{\tau} \subset  S^*\R^N \) by extending \( \Lambda \) by the Lagrangian sub-bundle of the normal bundle over \( \Lambda \) specified by \( \tau, \) and taking the image of this in a symplectic tubular neighborhood. This descends from the global thickening of the contactization \( (W\times \R)^{\tau} \) constructed similarly. 

Using this object we define the \emph{\( \tau \) twisted microsheaves on \( W \times \R \)} to be the sheaf on \( W\times \R \)
\[ \mush_{W\times\R}^{\tau} := \mush_{(W\times \R)^{\tau}}\res{W\times \R}. \]
This object is rather larger than what we wanted; an invariant associated directly to the Weinstein manifold \( (W,d\lambda). \) This can be recovered by restricting further to the objects microsupported in the \emph{skeleton} of \( W, \) \( \mathfrak{sk}(W), \) defined as the inward limit of the Liouville flow, and realized as its Legendrian lift to the contactization contained in the level \( W \times 0. \) That is, we define
\[ \mush_{W}^{\tau} := \mush_{\sk(W)^{\tau}}\res{W}. \]
More generally, if \( \Lambda \subset \partial_{\infty}W \) is a smooth Legendrian, \( \mathfrak{c}(\Lambda) \) its Liouville cone, we define the twisted microsheaves on the Weinstein pair \( (W,\Lambda) \) to be the sheaf
\[ \mush_{(W,\Lambda)}^{\tau} := \mush_{\sk(W)^{\tau} \cup \Lambda^{\tau}}\res{W}. \]

The invariance results described in the previous section, together with Gromov's embedding h-principle, combine to show that this is well defined (i.e. an invariant of only \( (W,\lambda,\tau) \)). The potential dependence on the choice of Weinstein primitive can also be relaxed:
\begin{theorem*}[\cite{NadlerShende-weinstein}*{Theorem 9.14}]
    Suppose \( \lambda,\lambda' \) are two Weinstein primitives for the symplectic manifold \( (W,\omega), \) which moreover are homotopic through a primitive \( \eta \) on \( W \times T^*\R \) which induces a skeleton stratifiable by isotropics. Then there is an equivalence 
    \[ \mush_{(W,d\lambda)}^{\tau} \cong \mush_{(W,d\lambda')}^{\tau} \]
    induced by this cobordism.
\end{theorem*}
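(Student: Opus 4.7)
The plan is to produce the equivalence as a wall-crossing through the Liouville cobordism $(W \times T^*\R,\, d\eta)$. Since the skeleton of $\eta$ is stratifiable by isotropics, the microsheaf construction recalled above applies on the cobordism, and after extending $\tau$ to a product stable polarization $\widetilde\tau$ on $W \times T^*\R$ we obtain a well-defined category
\[
\mush_{(W \times T^*\R,\, d\eta)}^{\widetilde\tau}.
\]
By Gromov's open symplectic h-principle we realize the cobordism inside $S^*\R^M$ for $M \gg 0$, together with a coherent Lagrangian thickening of $\sk(W \times T^*\R, \eta)^{\widetilde\tau}$ on which this microsheaf category is supported.

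Next, identify the two ends. The defining property of $\eta$ is that, as one moves to infinity in the appropriate direction of the $T^*\R$ factor, the restriction to $W$-slices returns $\lambda$ at one end and $\lambda'$ at the other. Correspondingly, the coisotropic reduction of $\sk(W \times T^*\R,\eta)$ along each end is canonically identified, as a stratified isotropic, with $\sk(W,d\lambda)$ and $\sk(W,d\lambda')$, and the product polarization $\widetilde\tau$ restricts to the original $\tau$ on each end up to an evident stabilization along a contractible factor. The stabilization principle cited in the excerpt then shows that $\mush_{(W \times T^*\R,\, d\eta)}^{\widetilde\tau}$ is locally constant along the $T^*\R$ direction near each end, so restriction to either end is an equivalence onto $\mush_{(W,d\lambda)}^{\tau}$ and $\mush_{(W,d\lambda')}^{\tau}$ respectively. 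Composing one equivalence with the inverse of the other yields the desired isomorphism, and naturality of the construction (through the mapping properties of Drinfeld's quotient and $\infty$-categorical sheafification) shows that it depends only on the cobordism up to contractible choice.

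The principal obstacle is verifying that the ends of the cobordism really do restrict to the two Weinstein structures as \emph{structured} objects: one must check that (i) the coisotropic reduction of $\sk(W \times T^*\R,\eta)$ at each end agrees with $\sk(W, d\lambda_\bullet)$ as a stratified isotropic, and (ii) the polarization $\widetilde\tau$ restricts compatibly to $\tau$ so that the two Lagrangian thickenings are isotopic as conic coisotropics in $S^*\R^M$. Both points reduce to controlling the Liouville dynamics of $\eta$ near infinity in the $T^*\R$ factor, and the isotropic-stratifiability hypothesis is precisely the input needed to ensure that this dynamics decouples into a product of the end's Weinstein dynamics with the radial flow on $T^*\R$. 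Once these matters are settled, the equivalence follows formally from the contact-invariance and stabilization propositions recalled earlier.
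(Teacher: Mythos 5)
This statement is cited verbatim from \cite{Nadler-Shende}*{Theorem 9.14}; the present paper does not prove it, so there is no in-paper argument to compare against. Evaluating your sketch on its own terms: the overall framing (thicken the cobordism $W \times T^*\R$ into a jet space, compare the two ends) is sensible, but there is a genuine gap in the central step. Local constancy of $\mush$ along the $T^*\R$ direction \emph{near each end} — which is what the stabilization lemma gives you, since the skeleton is a germwise product there — only shows that restriction from a collar of an end to the end is an equivalence. It does not make the restriction from the global sections over the \emph{entire} cobordism to either end an equivalence: nothing you've said controls what happens in the interior of the cobordism, where the skeleton can change dramatically (components appearing and disappearing, strata colliding, etc.). What you actually need is that the pushforward of $\mush$ along the cobordism to the parametrizing $\R$-line is a locally constant sheaf of $\infty$-categories over \emph{all} of $\R$, and this is precisely where the isotropic-stratifiability hypothesis and a GKS-style non-characteristicity argument must enter — not, as you suggest, merely to ensure that the Liouville dynamics decouple at the ends (that much is essentially a normalization of $\eta$ and is not the hard part). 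Relatedly, the reference proves this by exhibiting the movie of skeleta as the trace of a (stabilized) contact isotopy and invoking the contact-isotopy invariance result (the paper's quoted \cite{Nadler-Shende}*{Lemma 6.6}), rather than by computing global sections over the cobordism; your route could in principle be made to work, but it requires the global local-constancy statement, which is where all the actual content lives and which your sketch leaves unaddressed.
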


\subsection{Polarizations and Maslov data}\label{sec:sheaf-polarizations}
There is still a question of how \( \mush^{\tau}_{W} \) depends on the polarization \( \tau. \) This is addressed in detail \cite{NadlerShende-weinstein}*{\S11}, and we refer there for details (similar discussion can be found in \cite{jin-jhomomorphism}). In order to state the relevant result we must introduce slightly more notation. Given a stable presentable symmetric monoidal \( \infty \)-category \( \cc \) we denote by \( \mathrm{Pic}(\cc) \) the spectrum of automorphisms of \( \cc \) regarded as a module over itself (for further discussion see \cite{ABGHR14}).

The upshot is the following:
\begin{theorem*}[\cite{NadlerShende-weinstein}*{\S11}]
    There is a canonical map \( W \rightarrow B(U/O) \rightarrow B^2\mathrm{Pic}(\cc) \) where the first arrow classifies the stable Lagrangian Grassmann bundle, a null-homotopy of which determines a category of microsheaves in \( \cc \) on \( W. \) In particular, two stable polarizations \( \tau,\tau'\) (regarded as null-homotopies of the first map) induce equivalent microsheaves if they agree upon further composition to \( B\mathrm{Pic}(\cc). \)
\end{theorem*}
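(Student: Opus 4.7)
My plan is to isolate, in two stages, the portion of the polarization data that the microsheaf category actually sees.

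\textbf{Stage 1: the underlying gerbe.} At each point of $W$, the local category of microsheaves depends (via the Maslov action on microlocalization) on a choice of Lagrangian in the tangent space. Following the $J$-homomorphism perspective of \cite{jin-jhomomorphism}, I would encode this dependence by a canonical map $U/O \to B\,\mathrm{Pic}(\cc)$, reflecting that a Maslov-index-$1$ loop of Lagrangians changes the microstalk functor by an invertible $\cc$-linear shift. Delooping once gives $B(U/O) \to B^2\,\mathrm{Pic}(\cc)$, and composition with the classifying map of the stable Lagrangian Grassmann bundle of $W$ yields the canonical composite $W \to B(U/O) \to B^2\,\mathrm{Pic}(\cc)$ of the statement. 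The obstruction to assembling the local microsheaf categories over $W$ into a genuine global object is precisely this $\mathrm{Pic}(\cc)$-gerbe, and a null-homotopy of the composition supplies a trivialization, yielding a well-defined sheaf of categories $\mush^{\tau}_W$.

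\textbf{Stage 2: comparing two polarizations.} By definition, a stable polarization $\tau$ is a section of the stable Lagrangian Grassmann bundle, equivalently a null-homotopy of $W \to B(U/O)$; composing with $B(U/O) \to B^2\,\mathrm{Pic}(\cc)$ gives the null-homotopy required in Stage 1. Given two polarizations $\tau,\tau'$, the difference of the two resulting null-homotopies of $W \to B^2\,\mathrm{Pic}(\cc)$ is classified by a map $W \to B\,\mathrm{Pic}(\cc)$, and this map is precisely the composition of the section $W \to U/O$ measuring the difference of the two polarizations with $U/O \to B\,\mathrm{Pic}(\cc)$. Hence if $\tau$ and $\tau'$ agree after composition to $B\,\mathrm{Pic}(\cc)$, the two null-homotopies at the $B^2\,\mathrm{Pic}(\cc)$ level coincide, and the induced microsheaf categories are equivalent.

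\textbf{Main obstacle.} The substantive content lies in Stage 1: pinning down that the local Maslov ambiguity of microsheaves is faithfully recorded by a map $U/O \to B\,\mathrm{Pic}(\cc)$ and nothing finer. Concretely, one must show that rotating the polarization through a Maslov cycle affects the local microsheaf category only by an overall $\mathrm{Pic}(\cc)$-shift, and that these shifts are sufficiently coherent to assemble into a single map out of $U/O$ (rather than merely into a compatible family of local automorphisms). This amounts to a careful reading of the $\muhom$-computation of \cite{KS} under Lagrangian-family variation, packaged through the thickening construction of the previous subsection. Once that coherence is in place, Stage 2 is formal manipulation of null-homotopies in stable homotopy theory.
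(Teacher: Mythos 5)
The paper does not prove this statement: it is recalled, with citation, from Nadler--Shende \S11, and the Remark immediately following the statement in the paper unpacks the torsor structure on twistings rather than supplying a proof. So there is no in-paper argument to compare against; what follows is an assessment of your reconstruction on its own merits.

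Your sketch captures the essential architecture of the cited argument. Stage~2 is correct and is precisely how the paper's Remark itself reads the statement: the difference of two polarizations $\tau,\tau'$ is a map $W\to U/O$ (not quite a ``section,'' but the intent is clear), its image under $U/O\to B\mathrm{Pic}(\cc)$ classifies the discrepancy between the two induced null-homotopies of $W\to B^2\mathrm{Pic}(\cc)$, and agreement after that composition forces those null-homotopies to agree, hence the microsheaf categories to be equivalent by Stage~1. Your Stage~1 also correctly locates the substantive content: one must produce a genuine map of spectra $U/O\to B\mathrm{Pic}(\cc)$ coherently recording the $\mathrm{Pic}(\cc)$-shift that a Maslov loop of polarizations induces on microstalks, not merely a pointwise-compatible family of local automorphisms; this is exactly what Jin's $J$-homomorphism construction and the Maslov-descent argument of Nadler--Shende \S11 supply. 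You flag honestly that you do not fill this step in.

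One small imprecision is worth noting. The stated theorem asserts that a null-homotopy of the full composite $W\to B^2\mathrm{Pic}(\cc)$ determines a category of microsheaves --- i.e.\ Maslov data strictly more general than a stable polarization suffices. Your Stage~1 only explains why a polarization gives microsheaves depending on nothing finer than the induced null-homotopy downstairs; it does not address how to build the category from a null-homotopy of the composite that need not lift to a null-homotopy of $W\to B(U/O)$. For the ``in particular'' clause comparing two genuine polarizations --- which is the only part of the statement the present paper actually invokes --- this is harmless, but strictly speaking your proposal establishes a mild weakening of the full statement.
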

\begin{remark}
    Here we regard a stable polarization as a null-homotopy of a classifying map, i.e. a trivialization of the corresponding principal bundle. The map \( W \rightarrow B^2\mathrm{Pic}(\cc) \) should then be regarded as classifying a \( B\mathrm{Pic}(\cc) \) bundle over \( W, \) and so by looping, the sections \( \tau,\tau' \) push forward to sections of this \( B\mathrm{Pic}(\cc) \) bundle, i.e. trivializations thereof. Since this bundle is necessarily trivializable in order to define microsheaves, it follows that the possible twistings of microsheaves with coefficients in fixed category \( \cc \) are a torsor over \( [W,B\mathrm{Pic}(\cc)], \) the homotopy classes of maps \( W \) to a delooping of the Picard spectrum of \( \cc. \) In general, this is a very complicated object, but often some low height truncations are computable, and so if the topology of \( W \) is sufficiently simple one can obtain some handle on the different isomorphism classes of twistings. We will apply this later in \cref{sec:morse-objects} to the particular cases relevant to the non-squeezing story at hand.
\end{remark}

\section{The invariants}\label{sec:sheaf-invariants}
We will now specialize to the case \( W = T^*\R^n, \) and \( \cc = \vect_{\kk_2}, \) the 2-periodic dg-derived category of vector spaces over some finite field \( \kk. \) Moreover we fix once-and-for-all (iso-)contact embeddings \( \iota_{S^1}:\widehat{W} \rightarrow \partial_{\infty}(T^*S^1 \times W) \) and \( \iota_{\C}:\widehat{W} \rightarrow \C \times W \) as the part of the boundary with positive \( T^*S^1 \) coordinate, and the complement of the binding of the trivial open book respectively.

Let \( \Lambda \subset \widehat{W} \) be a smooth Legendrian submanifold.

\subsection{The circular category}\label{sec:circular-sheaves}
The first invariant we associate to \( \Lambda \) is microsheaves on \( (S^1 \times \R^n,\iota_{S^1}(\Lambda)), \) polarized by the fiber. We ask that the stalk (microstalk along the 0-section) vanishes away from the support of \( \pi(\Lambda) \). We denote this category by \( \mush^{\cyl}_{\Lambda}. \) This is canonically equivalent to the microlocalization of the \( \cc \) sheaves on \( S^1\times \R^n \) (micro)supported on the Liouville cone over \( \Lambda \) union the 0-section, and we will occasionally take this perspective without comment. When we wish to refer to global objects we will drop the prefix \( \mu \). Often, we will want to refer to the microlocal category over a subset of the bifurcation space. To reduce clutter, we denote this by the prefix \( \m \). Observe that since \( \pi \) is a smooth, proper, submersion, this itself is a sheaf of dg-categories, without re-sheafifying.

\subsection{The disk category}\label{sec:disk-sheaves}
Similarly, we will consider the \( \cc \) microsheaves on the relative Weinstein pair \( (\C\times \R^n,\iota_{\C}(\Lambda)), \) with its canonical polarization\footnote{Since the ambient space is contractible, there is exactly one homotopy class of (stable) polarization.}, which we denote by \( \mush^{\C}_{\Lambda}. \) As above the prefix \( \m \) will be used to denote the pushforward to the bifurcation space, and no prefix denotes the global sections. In this case, this coincides with the ambient Weinstein manifold, but does enforce that one sees the entire \( \Lambda \) cone over a given point. Again, this itself forms a sheaf of categories over the bifurcation space without correction.

\subsection{The Morse category}\label{sec:morse-objects}
As a means of communicating between these two settings, we will sometimes consider a category of sheaves on \( \R \times \R^n, \) with microsupport on some Legendrian \( \Lambda \subset \partial_{\infty}^{+} T^*\R^{n+1} \). We will denote this category by \( \sh^{\R}_{\Lambda}. \) 

We will sometimes (but not always) insist that the stalk should vanish near \( \pm\infty \) in the first \( \R \) factor. We denote the full subcategory of objects satisfying this by \( \sh^{\R}_{\Lambda,0}. \) In dimension 3, this is exactly the category which by now has been thoroughly studied in, e.g., \cite{STZ-knots,augmentations-are-sheaves}. As before, the prefix \( \mu \) denotes the corresponding microsheaves.

Let us collect here several observations which will be useful later:

First, if \( \Lambda \subset I \times \R^n \subset \R \times \R^n \) a smooth embedding \( \phi:I \times \R^{n} \rightarrow S^1 \times \R^n \) induces a fully faithful pushforward \( \phi_*:\sh^{\R}_{\Lambda,0} \rightarrow \sh^{\cyl}_{\phi(\Lambda)} \) onto the full subcategory of circular objects microsupported on the image of \( \Lambda \) with vanishing stalk outside the image of \( \phi. \) This functor depends on \( \phi \) only up to the symplectic isotopy class relative \( \Lambda \) of the induced map on cotangent bundles. In particular, any such \( \phi,\psi \) with the same restriction to \( p(\Lambda) \) induce the same functor up to a contractible choice.

On the other hand, suppose \( \Lambda \subset \widehat{T^*\R^n} \subset \partial_{\infty}\C\times T^*\R^n \) has front projection in the complement of a horizontal hyperplane \( H \). Now consider the Legendrian \( \Lambda \sqcup H^{\pm \epsilon} \), consisting of \( \Lambda \) together with two small (Reeb) push-off's of \( H \). Now the category \( \sh^{\C}_{\Lambda} \) embeds in \( \sh^{\C}_{\Lambda\sqcup H^{\pm \epsilon}} \) as precisely the objects with vanishing microstalk along \( H^{\pm \epsilon}. \) This category can, moreover, be identified with \( \sh^{\R}_{\Lambda,0}, \) where \( \Lambda \) is embedded by thinking of the region from \( H^{\epsilon} \) to \( H^{-\epsilon} \) as the positive end, and the complementary region as the negative end. 

Call a Legendrian \( \Lambda \subset \widehat{T^*\R^n} \) \textit{bounded} if, outside of some compact region of \( F \) the front \( p(\Lambda) \) is a disjoint union of horizontal hyperplanes. 
\begin{proposition}\label{prop:double-stopped-disks}
    Let \( \Lambda \subset \widehat{T^*\R^n} \) be a bounded front generic Legendrian, \( \mathcal{F} \in \sh^{\C}_{\Lambda} \) and let \( U \subset F \setminus p(\Lambda) \) be a connected component of the complement of \( p(\Lambda) \) with surjective bifurcation projection. This data induces a fully faithful equivalence
    \[ \sh^{\C}_{\Lambda} \rightarrow \sh^{\R}_{\Lambda,0}, \]
    which depends only on the component \( U \).
\end{proposition}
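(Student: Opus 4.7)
The strategy is to bootstrap off the construction recalled immediately before the statement, which handles Legendrians whose front is disjoint from a horizontal hyperplane $H$. We use $U$ to produce, via a Legendrian isotopy of $\Lambda$ generated by an ambient strict contactomorphism, a representative $\Lambda'$ whose front misses such a hyperplane, apply that earlier construction to $\Lambda'$, and transport back via contact invariance of microsheaves.

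Since $U \subset F = \R^n \times S^1$ is open and projects onto $\R^n$ with non-empty open fibers (each a union of intervals in $S^1$), a partition-of-unity argument produces a smooth map $\phi:\R^n \to S^1$ whose graph $\Gamma_\phi$ is contained in $U$. Contractibility of $\R^n$ lifts $\phi$ to $\tilde\phi:\R^n \to \R$, and the map
\[ \Phi:(x,p,t) \mapsto (x, p - d\tilde\phi(x), t - \tilde\phi(x)) \]
visibly preserves $dt - p\,dx$, so is a strict contactomorphism of $\widehat{T^*\R^n}$, isotopic to the identity through strict contactomorphisms by linearly interpolating $\tilde\phi$ to $0$. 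The Legendrian $\Lambda' := \Phi(\Lambda)$ is then Legendrian isotopic to $\Lambda$, remains bounded, and because $\Gamma_\phi \subset U$ is disjoint from $p(\Lambda)$, the front $p(\Lambda')$ is disjoint from the horizontal hyperplane $H := \{t = 0\}$. Applying the construction recalled before the statement produces a fully faithful equivalence $\sh^{\C}_{\Lambda'} \cong \sh^{\R}_{\Lambda', 0}$, which contact invariance of microsheaves (applied on both sides) transports back to the desired equivalence $\sh^{\C}_{\Lambda} \cong \sh^{\R}_{\Lambda, 0}$.

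The main obstacle is verifying that this composite depends only on $U$, and not on the auxiliary choices of $\phi$ and $\tilde\phi$. The $\tilde\phi$-ambiguity is mild: two lifts of the same $\phi$ differ by an integer, so the corresponding contactomorphisms differ by an integer number of Reeb periods, which (since $S^1$ has length $1$) coincide with the identity as self-maps of $\widehat{T^*\R^n}$. The $\phi$-ambiguity is more delicate, since the space of smooth sections of $\pi|_U$ can fail to be connected when $U$ has multiple sheets over $\R^n$. I expect this to be the technical heart of the argument, and plan to address it by reformulating the functor intrinsically in terms of the open set $U$ alone, or equivalently by exhibiting, for any two sections $\phi_0,\phi_1$ into $U$, a one-parameter family of ambient strict contactomorphisms interpolating $\Phi^{\phi_0}$ and $\Phi^{\phi_1}$ while maintaining disjointness of the moving front from a moving graph in $U$ — a strictly weaker condition than requiring a path through sections — and extracting the natural isomorphism between the two functors from the resulting family.
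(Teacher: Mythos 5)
Your approach is genuinely different from the paper's. The paper works ``in place'': it inserts two push-offs $H^{\pm}_f$ of a section $f$ of $\pi|_U$, straightens them antipodally by fibered frontal isotopy, and then deforms the Liouville primitive on the $\mathbb{C}$ factor by $txy$ to produce a relative Weinstein cobordism from $(\mathbb{C}\times T^*\R^n,\ \Lambda\sqcup H^{\pm})$ to $(T^*\R\times T^*\R^n,\ \Lambda)$; the equivalence is then assembled from the resulting $\phi_H$ together with the localization $\psi_H$, and full faithfulness is checked by tracking microstalks. You instead conjugate by the strict contactomorphism $\Phi^{\tilde\phi}$ to arrange that $p(\Lambda')$ misses $\{t=0\}$ and quote the ``hyperplane'' construction from the paragraph preceding the statement. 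The conceptual content is close --- both a section of $\pi|_U$ and a Liouville-straightening move appear in disguise --- but there is an important sense in which your proof assumes what it must show: the paragraph you cite asserts, without argument, that $\sh^{\C}_{\Lambda'}$ sits inside $\sh^{\C}_{\Lambda'\sqcup H^{\pm\epsilon}}$ as the vanishing-microstalk subcategory and that this is $\sh^{\R}_{\Lambda',0}$. Those two claims \emph{are} the content of the proposition; the paper's proof is exactly where the Weinstein-pair identification, the open-book deformation, and the microstalk bookkeeping are carried out. Your reduction is valid, but it delegates the substantive step to a heuristic the paper has not yet justified at that point, so as written it is not self-contained.

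The second issue is the one you flag yourself, but your proposed remedy does not in fact buy you anything. Write $\Phi^s = \Phi^{\tilde\psi_s}$ for your interpolating family of strict contactomorphisms preserving the projection to $\mathfrak{B}$. Then ``the front of $\Phi^s(\Lambda)$ misses $\{t=0\}$'' is equivalent to ``the graph $\Gamma_{\psi_s}$ misses $p(\Lambda)$,'' where $\psi_s = \tilde\psi_s \bmod 1$. But the graph of a section over the connected base $\mathfrak{B} \cong \R^n$ is itself connected, hence lies entirely in a \emph{single} component of $F\setminus p(\Lambda)$; if it agrees with $\phi_0,\phi_1 \in U$ at the endpoints it lies in $U$ for all $s$. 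So the relaxation ``the moving graph need not be a section into $U$'' is vacuous --- you are back to needing $\phi_0,\phi_1$ to be joined by a path of sections into $U$, i.e.\ connectedness of the space of sections of $\pi|_U$. This is exactly the hypothesis the paper invokes (``the space of such is itself contractible,'' resting on the claim that $\pi|_U$ has contractible fibers); you should either prove that claim under the stated hypotheses or note that the equivalence may in fact depend on a homotopy class of section, which is how the construction is used later in the paper (the notation $\DS_\sigma$ in \cref{sec:essential-rulings}). Two smaller points: a plain partition of unity does not literally apply to $S^1$-valued maps, so the existence of a smooth section of $\pi|_U$ deserves a cleaner argument (a fiberwise barycenter in $S^1\setminus p(\Lambda)_x$, or passing to the universal cover of the complement); and the sign of the momentum shift in $\Phi$ depends on whether one uses $dt - p\,dq$ or $dt+\lambda$ with $\lambda = p\,dq$, so it is worth pinning down which convention matches the paper's.
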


\begin{proof}
    Either \( \Lambda \) is empty, in which case the proposition is tautological, or \( \pi:U \rightarrow \mathfrak{B} \) is a smooth fiber bundle with contractible fiber (diffeomorphic to the open interval). Thus there exists a (smooth) section \( f \), and the space of such is itself contractible. Let \( H^{\pm}_{f} \subset U \) be small push-offs of the image of \( f, \) disjoint from each other, without loss of generality \( H^+ > H^- \) with respect to the fiber orientation induced by that on \( S^1. \)

    The pair \( (\C \times T^*\R^n, \Lambda \sqcup H^{\pm}) \) is a relative Weinstein pair in the sense of \cite{NadlerShende-weinstein}. Straightening \( H^{\pm} \) and positioning them antipodally via fibered frontal isotopy yields a (canonical up to contractible choice) identification with
    \[ \sh^{\C}_{\Lambda' \sqcup \pm \frac{\pi}{2} \times \R^n}. \]
    Now deforming the standard Liouville structure \( xdy - ydx \) on \( \C \) by \( txy \) we obtain a relative Weinstein cobordism
    \[ (\C \times T^*\R^n, \Lambda \sqcup H^{\pm}) \rightleftarrows (T^*\R \times T^*\R^n, \Lambda) \]
    yielding an equivalence 
    \[ \begin{tikzcd} \sh^{\C}_{\Lambda \sqcup H^{\pm}} \arrow[r,"\phi_H"] & \sh^{\R}_{\Lambda}.\end{tikzcd} \]
    Here the copy of \( \Lambda \) on the right hand side is regarded as being embedded in the positive boundary at \( \infty, \) and the zero section is identified with the cone on \( H^{\pm}. \)

    On the other hand the inclusion of relative skeleta induced by \( \Lambda \hookrightarrow \Lambda \sqcup H^{\pm} \) induces a functor by pullback
    \[ \begin{tikzcd} \mods\left( \sh^{\C}_{\Lambda} \right) \arrow[r,"\psi_H"] & \mods\left( \sh^{\C}(\Lambda \sqcup H^{\pm}) \right). \end{tikzcd} \]
    The desired functor is the composition of \( \psi_H \) and \( \phi_{H}^*, \) the pullback on modules induced by \( \phi_H. \) 

    It remains to verify that this is an equivalence. Note that \( \phi^H \) takes (co-representatives of) the microstalks at \( \Lambda \) onto their geometric counterparts in Morse category, and (corepresentatives of) the microstalks at \( H^{\pm} \) to the stalk at \( \pm\infty. \). In particular, the full subcategory of objects with vanishing microstalk on \( H^{\pm} \) is mapped fully faithfully, and essentially surjectively, onto the full subcategory \( \sh^{\R}_{\Lambda,0} \) by \( \phi^*_H. \) Moreover, by its construction as a localization along \( H^{\pm}, \) \( \psi_H \) is a fully faithful embedding with image the objects with vanishing microstalk on \( H^{\pm}. \) It follows that the composition is an equivalence as desired.

    The equivalence constructed in this way depends a priori on our choice of \( H^{\pm}, \) but different choices of these induce canonical fully faithful equivalences (via a contractible choice of frontal isotopy)
    \[ \sh^{\C}_{\Lambda \sqcup H^{\pm}} \cong \sh^{\C}_{\Lambda \sqcup (H')^{\pm}} \]
    with respect to which the functors \( \phi,\psi \) are natural. The proposition follows.
\end{proof}
We will later refer to the procedure yielding this equivalence as \textit{double stop insertion} or \textit{double stopping}.

Putting these together we have a means of lifting a disk object supported on a local Legendrian to a circular object on the same, which may be thought of as ``installing a 0-section,'' and ``excising an interval''  (double stopping in \( U \) and then pushing forward along inclusion).

\begin{remark}
    It follows from the geometric nature of the construction of the double stopped lift that it is compatible with the restriction maps of the sheaf of categories \( \m\sh^{\bullet}. \) In particular, if \( \Lambda \) is a smooth Legendrian with front avoiding the graphical hypersurface \( H, \) over \( U \subset \mathfrak{B}, \) and \( V \subset U \) is open then the following diagram commutes:
    \begin{equation*}
        \begin{tikzcd}
            \m\sh^{\C}_{\Lambda}(U) \arrow{r}{\DS_H} \arrow{d}{\mid_V} & \m\sh^{\cyl}_{\Lambda}(U) \arrow{d}{\mid_V}\\
            \m\sh^{\C}_{\Lambda}(V) \arrow{r}{\DS_H} & \m\sh^{\cyl}_{\Lambda}(V)
        \end{tikzcd}
    \end{equation*}
    where \( \DS_H \) denotes the embedding induced by double stopping at \( H. \)
\end{remark}

\begin{remark}
    Later, we will want to apply the above construction to multiple objects of \( \sh^{\C}_{\Lambda} \) simultaneously, but with respect to potentially different choices of double stop, depending on the particular object considered. In particular we would like to make sense of all of these lifts landing in the same target. If we take the vertical (stable) polarization of the \( \C \) factor, and try to work with respect to the fiber polarization on \( T^*S^1 \) we will run into issues producing compatible lifts on a geometric level: double stopping at antipodal points will not yield a global polarization that agrees with the fiber, but rather an oriented twist (see \cref{sec:polarizations}) of this.

    In order to resolve this, we should work with respect to this non-standard polarization of the cotangent bundle throughout. On the other hand, if we work over 2-periodic complexes, the Picard group (\( \pi_0 \) of the Picard spectrum) is \( \Z/2, \) and our twist is oriented, thus a square. Then the Maslov descent arguments of \cite{NadlerShende-weinstein}*{\S 11} show that the microsheaves determined by a stable polarization depend only on the induced map to \( B\mathrm{Pic}(\mathcal{C}) \), which has \( \pi_1 \cong \Z/2 \) by the above. Since our space has the homotopy type of \( S^1, \) we then conclude that the 2-periodic microsheaves induced by the fiber polarization, and this twisted polarization are isomorphic. Consequently, when we want to compute in \( \sh^{\cyl}_{\Lambda}, \) we will continue to work in the fiber polarization, and tacitly pass through this identification when we wish to compare.
\end{remark}

\section{Local models}\label{sec:combinatorics}
We now describe some ``combinatorial'' models for the categories \( \sh^{\bullet}_{\Lambda}(\widehat{T^*\R^{n}}). \) In dimension 3, these will allow us to give a complete description of (microlocal rank 1) objects of \( \sh^{\bullet}_{\Lambda} \) in terms of the front projection for front generic \( \Lambda. \) More generally, these allow us to produce a version of ``normal ruling'' (\cite{ChekanovPushkar05}) for Legendrians in all dimensions, and will help facilitate the constructions of \ref{sec:essential-rulings}. 

In dimension 1 there is exactly one closed, connected contact manifold, \( S^1, \) which can be thought of as the contactization of a point. In this case, Legendrians are collections of points on \( S^1, \) and the two fillings we consider are \( T^*S^1 \) and \( \C, \) where we think of our Legendrian living at \( +\infty \) in \( T^*S^1. \) Thus the bifurcation space is a point. These form the fundamental local model for sheaves on Legendrians in higher dimension, as this computes the local category over the open strata of \( \mathfrak{B} \) via symplectic reduction along the fiber of \( \Pi. \). We also give a complete local description of the codimension 1 front singularities, which allows a ``by hand'' construction of sheaves on Legendrians in dimension 3. Finally, we give a partial classification of local objects near front singularities occurring in higher codimension, which will play an important role in the constructions of \cref{sec:essential-rulings}.

\subsection{Positively marked cylinders}\label{sec:cylindrical-cat}
We begin with the case of \( T^*S^1 \) equipped with some collection of points \( \Lambda = \set*{\theta_i} \subset S^1 \cong \partial_{+,\infty}T^*S^1. \) As is by now standard in the business of microlocal sheaves, \( \Lambda \) may be regarded as inducing a stratification of \( S^1. \) In this situation, the geometry is sufficiently simple, and is in fact a triangulation, so we may apply the results of \cite{Nadler-branes}*{\S 2}. From this observation we obtain an equivalent characterization of \( \sh_{\Lambda} \) as a certain subcategory of the proper modules over the poset category induced by the stratification.

This allows us to give a characterization of the local category, but for later computations it will be desirable to give a completely standard algebraic model from which the geometry has been abstracted. This equivalence can be rephrased in the following terms:
\begin{proposition}\label{thm:circular-quiver}
\[ \sh^{\cyl}_{\Lambda} \cong \prop(\hat{\mathcal{A}}_n), \]
where \( n = \left| \Lambda \right|, \) \( \prop(\hat{\mathcal{A}}_n) \) denotes the locally perfect, 2-periodic derived representations of the cyclic quiver with \( n \) vertices.
\end{proposition}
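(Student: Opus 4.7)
The plan is to identify $\sh^{\cyl}_{\Lambda}$ as a category of constructible sheaves on $S^1$, pass through an exit-path description to obtain quiver representations, and use the one-sidedness of the microsupport condition to collapse the resulting bi-directional cyclic graph to $\hat{\mathcal{A}}_n$.

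First I would fix a cyclic ordering $\theta_1, \ldots, \theta_n$ of the points of $\Lambda$ and consider the regular cell stratification of $S^1$ with open $0$-cells $\{\theta_i\}$ and open $1$-cells $U_i := (\theta_i, \theta_{i+1})$. The microsupport of every object of $\sh^{\cyl}_{\Lambda}$ lies in the union of the zero section and the positive Liouville cone over $\Lambda$, so every such object is constructible with respect to this stratification. The triangulated model of \cite{Nadler-branes}*{\S 2} then equates the full category of such constructible sheaves with representations in $\vect_{\kk_2}$ of the exit-path category of the stratification, which is freely generated by vertices $\{\theta_i\} \cup \{U_i\}$ together with two arrows $\theta_i \to U_{i-1}$ and $\theta_i \to U_i$ at each $\theta_i$.

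Next I would impose the positivity of the microsupport. The local model of a sheaf on $\R$ microsupported on the zero section together with a single positive covector at the origin gives specialization data of the form $V_L \xleftarrow{\sim} V_0 \to V_R$, with the leftward map (opposing the positive covector direction) forced to be a quasi-isomorphism. Applied pointwise at each $\theta_i$, contracting these iso-arrows identifies $V_{\theta_i}$ with the stalk on the adjacent arc and collapses the exit-path category to the $n$-vertex cyclic quiver $\hat{\mathcal{A}}_n$, with arrows induced by the remaining non-trivial specialization maps. Since the microsheaves of \cite{Nadler-Shende} are built with perfect microstalks, and those microstalks correspond under the equivalence to the vector spaces sitting at the vertices of $\hat{\mathcal{A}}_n$, requiring them to be perfect is exactly the local-perfectness condition defining $\prop(\hat{\mathcal{A}}_n)$ in \cref{def:perfect-and-proper}.

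The main obstacle I anticipate is conventional bookkeeping: verifying that the positive Liouville cone with respect to the fiber polarization really produces arrows oriented in the cyclic (rather than anti-cyclic) direction around $S^1$, and checking that the microstalk functor lines up on the nose with evaluation at the arc vertices of $\hat{\mathcal{A}}_n$. Once these conventions are pinned down, the rest is a direct application of Nadler's exit-path equivalence combined with the local microsupport calculation outlined above.
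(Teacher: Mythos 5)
Your proposal is correct and follows essentially the same route as the paper's proof: both invoke the \cite{Nadler-branes} exit-path/poset model for constructible sheaves on the stratified circle, and both use the positive-cone microsupport condition to force one of the two generalization maps at each $\theta_i$ to be a quasi-isomorphism, collapsing the zig-zag to $\hat{\mathcal{A}}_n$ with the finite-rank (perfect microstalk) condition matching $\prop$. Your write-up makes the local specialization model $V_L \xleftarrow{\sim} V_0 \to V_R$ and the contraction of iso-arrows more explicit than the paper's terse "downward generalization is a quasi-isomorphism," but the underlying argument is identical.
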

\begin{proof}
As noted above, there is an equivalence
\[ \sh^{\cyl}_{\Lambda} \rightarrow \prop^+\left( \boldsymbol{\Lambda}^{op}, \vect_{\kk_2} \right), \]
where the right hand side is the dg-category of functors from the stratification induced by \( \Lambda \) to the 2-periodic dg-derived category of \( \kk \) vector spaces, such that downward generalization is (quasi-)isomorphism, and all complexes have cohomology of finite rank. Similarly, if \( \left| \Lambda \right| = n \) there is an equivalence 
\[ \prop(\hat{\mathcal{A}}_n) \rightarrow \fun^+\left(\boldsymbol{\Lambda}, \vect_{\kk_2} \right), \]
which is induced by identifying \( \hat{\mathcal{A}}_n \) with the dual graph to cell structure induced by \( \boldsymbol{\Lambda}, \) where the value over a vertex of \( \boldsymbol{\Lambda} \) is defined to be that at the downward vertex, with downward generalization given by the identity.
\end{proof}

\begin{remark}
    The category \( \prop(\hat{\mathcal{A}}_n) \) is itself canonically split, 
    \[ \prop(\hat{\mathcal{A}}_n) \cong \prop^{0}(\hat{\mathcal{A}}_n) \oplus \mathbf{L}_{\perf}(\hat{\mathcal{A}}_n), \]
    into the (homologically) nilpotent representations, and local systems of perfect complexes (all propagation maps are (quasi-)isomorphism). The latter factor is quite complicated (and presents the principle complication in classifying the representations of cyclic quivers) but for our purposes may be safely ignored, since the vanishing at \( \infty \) condition imposed on our microsheaves in dimension greater than 1 ensures that this is trivial. As such we will generally only address the full subcategory of nilpotent representations below.
\end{remark}

\subsubsection{Classification}\label{sec:circular-classification}
The proper, nilpotent dg-derived representations of \( \hat{\mathcal{A}}_n \) satisfy a Gabriel-type theorem. That is, every such representation is isomorphic to a unique, finite, direct sum of shifts of nilpotent indecomposables, each of which is a representation such that all but two propagation maps are isomorphism, and each of those have mapping cone of homological rank 1. In analogy to the ``barcode'' terminology often applied to the output of the usual Gabriel's theorem for the \( \mathcal{A}_n \) quiver, we will refer to this as a \textit{circular-bar decomposition}.

Remarkably, as explained in \cref{sec:hereditary-cats} it is enough to show that ordinary nilpotent \( \kk \)-representations of \( \hat{\mathcal{A}}_n \) have such a decomposition. This is not difficult, but we will include a description of the classification (with proof), since it has been rather difficult to extract from the literature in plain language. Let \( \mathcal{C} := \rep^{0}_{fin}(\hat{\mathcal{A}}_n) \) denote the (\( \kk \)-linear abelian) category of finite rank nilpotent representations of \( \hat{\mathcal{A}}_n. \) Let \( \mathcal{B} \) denote the full subcategory of objects with total cone of rank 2. The classical version is the following:
\begin{proposition}\label{prop:semi-simple-circles}
    \( \mathcal{B} \) consists exactly of the indecomposables of \( \mathcal{C}. \) In particular, every object of \( \mathcal{C} \) is isomorphic to a unique direct sum of objects of \( \mathcal{B}. \)
\end{proposition}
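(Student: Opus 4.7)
The plan is to invoke Krull--Schmidt to reduce to a classification of indecomposables, then classify them explicitly and verify they coincide with $\mathcal{B}$. Since $\mathcal{C}$ consists of finite-dimensional representations of a quiver, endomorphism rings of objects are finite-dimensional $\kk$-algebras, hence semi-perfect, so Krull--Schmidt applies and it suffices to identify the indecomposable objects with $\mathcal{B}$.

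Next I would exhibit a family of candidate indecomposables. For each $(i,\ell) \in \Z/n \times \Z_{\geq 1}$, define $M_{i,\ell}$ to be the nilpotent representation with basis $e_1,\ldots,e_\ell$, where $e_k$ sits at vertex $i+k-1 \pmod n$ and the edge maps send $e_k \mapsto e_{k+1}$ (with the convention $e_{\ell+1}:=0$). A direct computation shows $\mathrm{End}(M_{i,\ell}) \cong \kk[t]/(t^{\lceil \ell/n \rceil})$, a local ring, so each $M_{i,\ell}$ is indecomposable. To show conversely that every indecomposable is of this form, I would use the covering functor $\rep(\mathcal{A}_\infty) \to \rep(\hat{\mathcal{A}}_n)$ along the universal cover of the cyclic quiver: Gabriel's classification for $\mathcal{A}_\infty$ identifies its finite-dimensional indecomposable representations with interval modules, and the push-down of a bounded interval of length $\ell$ starting at vertex $a$ is precisely $M_{a \bmod n, \ell}$. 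The standard fact that covering functors preserve and reflect indecomposability in this setting then exhausts the nilpotent indecomposables. Alternatively, a direct induction on the nilpotency index of the holonomy of $V$, peeling off a maximal Jordan chain, achieves the classification from scratch.

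To match the classification with $\mathcal{B}$, define $\chi(V) := \dim V - \sum_\alpha \mathrm{rank}(f_\alpha)$, which is additive on direct sums and, using $\dim\mathrm{cone}(f_\alpha) = \dim V_i + \dim V_{i+1} - 2\,\mathrm{rank}(f_\alpha)$, is precisely half the total cone rank. One checks directly that $\chi(M_{i,\ell})=1$ for every $\ell$ by inspection of the dimension vector and edge-rank sequence. Conversely, any nonzero nilpotent $V$ satisfies $\chi(V) \geq 1$: if $\chi(V)=0$, every edge map must have maximal rank, hence be an isomorphism, making $V$ a nonzero local system, which cannot be nilpotent. Combined with additivity, this forces any $V$ with $\chi(V)=1$ to be indecomposable, and every indecomposable to satisfy $\chi=1$.

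The main obstacle is completeness of the classification in the middle step. The nilpotency hypothesis is essential: without it, $\hat{\mathcal{A}}_n$ also admits an infinite family of indecomposable local-system type representations parametrized by conjugacy classes of invertible operators on $\kk^d$ (it has tame representation type). Carefully excluding these --- via the covering argument, or equivalently a Fitting-decomposition of the holonomy operator --- is where the real content of the proof lies. Once that is in hand, the identification with $\mathcal{B}$ via $\chi$ is essentially a bookkeeping exercise in linear algebra.
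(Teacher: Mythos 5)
Your argument is correct, but it takes a noticeably different route from the paper's. The paper proceeds by a direct inductive construction: filter $\rho$ by the nilpotency length of its elements, choose complements at the top of the filtration, observe that the generated subrepresentations $\psi_i^N$ from different vertices meet trivially (so they split off), and recurse down the filtration. This is self-contained and constructive but leaves uniqueness of the decomposition implicit. Your proof instead (i) invokes Krull--Schmidt to reduce the whole statement to a classification of indecomposables, (ii) classifies indecomposables as the $M_{i,\ell}$ via local endomorphism rings plus the covering functor from $\rep(\mathcal{A}_\infty)$ under the free $\Z$-action by shift-by-$n$, and (iii) matches the result to $\mathcal{B}$ via the additive invariant $\chi(V)=\dim V-\sum_\alpha\mathrm{rank}(f_\alpha)$, which you correctly identify as half the total cone rank. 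The $\chi$-computation is a clean bit of bookkeeping: $\chi(M_{i,\ell})=1$, $\chi\ge 0$ always, and $\chi=0$ forces all edge maps injective hence a local system, excluded by nilpotence. Note that your sentence ``Combined with additivity, this forces \ldots every indecomposable to satisfy $\chi=1$'' is slightly misphrased --- that direction needs the classification from step (ii), not merely additivity --- but the logic is clearly in place. The ``alternative'' you mention at the end of step (ii), peeling off a maximal Jordan chain, is essentially the paper's length-filtration argument, so you are aware of both paths. Your route imports more machinery (Krull--Schmidt, Galois covering theory for the truncated cyclic Nakayama algebra) in exchange for a more conceptual, modular proof and an elegant numerical characterization of $\mathcal{B}$; the paper's route is more elementary and constructive but heavier on hands-on linear algebra.
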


\begin{proof}
    Let \( \rho \in \mathcal{C} \) be a finite rank, nilpotent representation. Label the vertices of \( \hat{\mathcal{A}}_n \) \( 0,1,\ldots,n-1 \) cyclically ordered, and let \( p_i:i \rightarrow i+1 \) denote the elementary propagation morphism connecting \( i \) to \( i+1 \). By nilpotence, for each \( v \in \rho(i) \) there is some \( l \in \N \) such that
    \[ \rho\left( p_{i+l-1}\circ p_{i+l-2} \circ \cdots \circ p_i \right)(v) = 0. \]
    This induces a filtration of each \( \rho(i) \) by the minimal such \( l \) to annihilate a given element, called the \textit{length filtration}. Denote by \( \rho^l(i) \) the subspace annihilated by the \( l \)-fold composition of propagation. Clearly \( \rho^l \) with the induced propagation maps forms a subrepresentation.

    Let \( N \) be the largest number such that \( \rho^N \hookrightarrow \rho \) is not an isomorphism. Choose a complement to \( \rho^{N}(i) \) over each vertex. Denote by \( \psi^N \) the subrepresentation induced by the image of this complement under propagation, and by \( \psi^N_i \subset \psi^N \) the further subrepresentation in the image of the complement chosen over the vertex \( i. \) Considering the filtration, we necessarily have \( \psi^N_i \cap \psi^N_{j} = 0 \) whenever \( i \neq j. \) Thus the map
    \[ \bigoplus_{i} \psi^N_i \rightarrow \psi^N \]
    induced by inclusion is injective and surjective, thus isomorphism.

    Similarly, a choice of basis for the complement over \( i \) presents \( \psi^N_i \) as a direct sum of objects of \( \mathcal{B} \).

    Now proceeding down in the filtration, we can choose complements to \( (\rho^{N-1} + \psi^N \cap \rho^N) \) inside \( \rho^N, \) inducing a subrepresentation \( \psi^{N-1} \) as before. By construction the sum of inclusions
    \[ \psi^N \oplus \psi^{N-1} \rightarrow \rho \]
    is injective. Thus proceeding inductively, we present \( \rho \) as 
    \[ \rho \cong \bigoplus_{m = 0}^{N} \psi^m \]
    where some \( \psi^m \) may be trivial.

    The decomposition thus produced is evidently into indecomposables, and the above procedure applied to an object of \( \mathcal{B} \) produces a unique chain of maximal length which necessarily surjects.
\end{proof}

Since nilpotent representations of the cyclic quiver are hereditary (they are, in particular, finite representations of the infinite linear quiver), we immediately have
\begin{corollary}\label{cor:circular-decomposition}
    Every object of \( \prop^{0}(\hat{\mathcal{A}}_n) \) has a canonical circular-type-bar decomposition, i.e. is (quasi-)isomorphic to a  direct sum of bar-objects. In particular, the inclusion of the full subcategory spanned by direct sums of shifted bars is an equivalence.
\end{corollary}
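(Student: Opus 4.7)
The plan is to leverage the two ingredients that have just been set up: the classical Krull--Schmidt-style classification of \cref{prop:semi-simple-circles} in the abelian category $\mathcal{C} = \rep^0_{fin}(\hat{\mathcal{A}}_n)$, and the formal principle recorded as \cref{lem:periodic-inheritance} which lets one transport such a classification from the heart into its 2-periodic dg-derived envelope whenever the abelian category is hereditary.

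First I would observe that $\mathcal{C}$ is hereditary: a finite rank nilpotent representation of $\hat{\mathcal{A}}_n$ is, via the length filtration used in the proof of \cref{prop:semi-simple-circles}, pulled back from a finite representation of the infinite linear quiver $A_\infty$, and path algebras of quivers without relations are hereditary. Equivalently, $\ext^k$ between any two finitely supported nilpotent representations can be computed by the standard two-term projective resolution afforded by the arrow presentation, and so vanishes for $k \geq 2$. Thus the hypothesis of \cref{lem:periodic-inheritance} is satisfied with this choice of $\mathcal{C}$.

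Next, given an object $A^\bullet \in \prop^0(\hat{\mathcal{A}}_n)$, properness forces each cohomology $H^i(A^\bullet)$ to be a finite rank nilpotent representation, hence an object of $\mathcal{C}$, and there are only finitely many nonzero such (up to 2-periodicity). By \cref{lem:periodic-inheritance} the canonical map from $H^\bullet(A^\bullet)$ back into $A^\bullet$ (constructed by lifting cocycle representatives, which exist precisely because $\ext^{\geq 2}$ vanishes) is a quasi-isomorphism. Applying \cref{prop:semi-simple-circles} to each $H^i(A^\bullet)$ then decomposes it uniquely as a direct sum of bar-objects in $\mathcal{B}$, and assembling these shifted appropriately yields the desired circular-bar decomposition of $A^\bullet$.

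Finally, the canonicity/uniqueness statement -- and hence the fact that the inclusion of the full subcategory of shifted bar sums is an equivalence -- follows from combining the uniqueness clause of \cref{prop:semi-simple-circles} with the fact that quasi-isomorphism of objects in $\prop^0(\hat{\mathcal{A}}_n)$ descends to isomorphism of total cohomology in $\mathcal{C}$. The only mildly delicate point is verifying that the splitting at the level of cohomology actually lifts to a splitting in the derived category, but this is exactly what hereditariness buys: the obstructions to such lifts live in $\ext^{\geq 2}$ groups that all vanish. I expect no serious obstacle beyond assembling these ingredients carefully.
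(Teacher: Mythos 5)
Your proposal is correct and follows essentially the same route as the paper: establish hereditariness of $\rep^0_{fin}(\hat{\mathcal{A}}_n)$ by unwinding to representations of the infinite linear quiver, invoke \cref{lem:periodic-inheritance} to identify a 2-periodic proper complex with its cohomology, and then apply \cref{prop:semi-simple-circles} degreewise. The paper states this as a one-line remark ("Since nilpotent representations of the cyclic quiver are hereditary...we immediately have"), and your write-up simply fills in those same steps in more detail, including the uniqueness/canonicity point, so there is nothing to fix.
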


It is clear from the proof that objects of \( \mathcal{B} \) are classified by their endpoints (where their propagation maps have cokernel or kernel), and the number of cyclic compositions required to annihilate them. 

The equivalence of \cref{thm:circular-quiver} yields such a decomposition for objects \( \mathcal{F} \in \sh^{\cyl}_{\Lambda} \) of nilpotent type, which we will also refer to as the circular bar decomposition of \( \mathcal{F}. \)

\subsection{Marked disks}\label{sec:marked-disks}
Next we consider \( \C, \) again with some collection of marked points at \( \infty. \) A direct computation of a dg-realization of this category would be slightly cumbersome, so we will instead extract what we need from the fully faithful embedding of \cref{prop:double-stopped-disks}. From this we can describe everything in terms of some convenient, simple objects: call an object \( b \in \sh^{\C}_{\Lambda} \) a \textit{disk-type-bar} if \( b \) has non-trivial microstalk at exactly two (distinct) points, each of rank 1, with endomorphisms of rank 1. The disk-type-bars are classified by their endpoints, and the grading of their microstalks. Working over \( \kk_2 \) there are exactly two non-isomorphic bar objects for each pair of distinct markings. 

We would like to compare objects of the disk category to those of the circular category by means of the classification given by \cref{sec:circular-classification}. To this end, we have the following result:
\begin{theorem}\label{thm:disk-pairings}
    Let \( \mathcal{F} \in \sh^{\C}_{\Lambda} \). Then \( \mathcal{F} \) is isomorphic to a finite direct sum of bars:
    \[ \mathcal{F} \cong \bigoplus_{i = 1}^{N} \left(b_{l_i,k_i}^{\sigma(i)}\right)^{m_i} \]
    where \( l_i,k_i \) index Legendrian markings, and \( \sigma(i) \) denotes grading data. 
\end{theorem}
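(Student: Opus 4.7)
The plan is to reduce the statement to the analogous (classical) Gabriel-type classification for the linear $A_n$ quiver by passing through the Morse category, exactly as we did in the circular case via \cref{thm:circular-quiver,prop:semi-simple-circles}. Concretely, the complement of $\Lambda$ in $\partial_\infty \C \cong S^1$ consists of $n := |\Lambda|$ arcs. Pick any one of these arcs $U$ and let $H$ be a horizontal line in $\C$ whose boundary lies in $U$; we may apply \cref{prop:double-stopped-disks} to this data to obtain a fully faithful equivalence
\[ \sh^{\C}_{\Lambda} \;\cong\; \sh^{\R}_{\Lambda,0}, \]
where the right hand side is the category of sheaves on $\R$ microsupported at the remaining $n$ marked points at infinity and with vanishing stalk at $\pm\infty$.

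Next I would promote this to an algebraic description. The marked points of $\Lambda$ stratify $\R$ into $n+1$ open intervals (the two unbounded ones contributing the ``stops'' at $\pm\infty$), and along this stratification the standard microlocal argument used for \cref{thm:circular-quiver} applies verbatim, except that the underlying quiver is now the linear $A_n$ (not cyclic) because we have cut $S^1$ open at $U$. The vanishing-at-infinity condition corresponds exactly to demanding that the leftmost and rightmost vertex modules are $0$. This gives an equivalence
\[ \sh^{\R}_{\Lambda,0} \;\cong\; \prop(\mathcal{A}_n), \]
with $\prop(\mathcal{A}_n)$ the 2-periodic dg-derived category of locally-perfect representations.

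For the decomposition statement, I would invoke the hereditary inheritance principle \cref{lem:periodic-inheritance}: since $\rep_{\fin}(\mathcal{A}_n)$ is a hereditary abelian category, every object of $\prop(\mathcal{A}_n)$ is quasi-isomorphic to a finite direct sum of shifts of indecomposable representations of the underlying abelian category. Classical Gabriel theory for the linear $A_n$ quiver then guarantees that the indecomposables are precisely the interval representations, i.e.\ those whose total dimension is supported on a consecutive run of vertices with propagation maps equal to the identity where possible and $0$ elsewhere. Tracing through the equivalence, an interval representation corresponds exactly to a sheaf with non-trivial microstalk at precisely the two strata bounding the interval, each of rank $1$ -- that is, to a disk-type-bar $b_{l,k}^\sigma$, with the grading $\sigma$ recorded by the shift. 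Since the chain of equivalences is exact and additive, this yields the desired decomposition of $\mathcal{F}$.

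The main obstacle in carrying this out rigorously is Step 2: identifying $\sh^{\R}_{\Lambda,0}$ with $\prop(\mathcal{A}_n)$ in a way which correctly tracks microstalks. In particular, one must verify that the combinatorial model really does arise from the stratification on $\R$ in the way suggested (so that the propagation maps of the quiver recover the corestriction maps between the strata), and that under the composite equivalence an interval representation ends up with its microstalks supported at exactly the two markings $l_i,k_i$ one expects from the endpoints. Once this compatibility is in hand, the rest is a formal consequence of hereditary Gabriel theory and \cref{lem:periodic-inheritance}.
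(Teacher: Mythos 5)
Your proposal follows the same strategy as the paper's proof: double-stop via \cref{prop:double-stopped-disks} to land in $\sh^{\R}_{\Lambda,0}$, identify this with 2-periodic representations of a linear quiver, and invoke \cref{lem:periodic-inheritance} together with Gabriel's theorem for $A_n$ to obtain the bar decomposition. The only point the paper's (terser) proof adds is uniqueness of the decomposition, which follows because two distinct decompositions would pull back under the double-stop equivalence to violate Gabriel's theorem in the Morse picture.
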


\begin{proof}
    The functor of \cref{prop:double-stopped-disks} yields a non-canonical fully faithful equivalence to \( \sh_0(\R,\Lambda). \) Gabriel's theorem and hereditary then imply both existence (directly) and uniqueness: Morphisms between bars can be computed in the Morse category, the existence of two distinct decompositions violates Gabriel's theorem after pullback.
\end{proof}

Let us emphasize an immediate corollary of this uniqueness statement: the disk-type-bar decomposition obtained via local double stop insertion as in \cref{prop:double-stopped-disks} is independent of \textit{where} the double stop is inserted.

\subsection{Codimension 1}\label{sec:codim-1}
For computational purposes later, it will be convenient to understand how each of our categories behave near the simplest types of front singularities we will encounter: simple, transverse crossings of two sheets of the front, and folds. 

\subsubsection{Disk sheaves}\label{sec:codim-1-disks}
To deal with disk objects, we employ the same trick as before: Since singularities are local in the front, there is always some region where we can cut the front along a hyperplane. It follows that the fold and crossing rules are then the same as those for the induced local Morse-type objects (see \cite{STZ-knots} for proof). 

In particular, over a small bifurcation neighborhood of a crossing
\begin{center}
    \begin{tikzpicture}
        \draw (-1,0.5) to[out=0,in=135] (0,0) to (1,-1) to (2,-2) to[out=-45,in=180] (3,-2.5);
        \draw (-1,-2.5) to[out=0,in=-135] (0,-2) to (1,-1) to (2,0) to[out=45,in=180] (3,0.5);
        \node (A) at (1,0) {$D^{\bullet}$};
        \node (B) at (1,-2) {$A^{\bullet}$};
        \node (C) at (0,-1) {$B^{\bullet}$};
        \node (D) at (2,-1) {$C^{\bullet}$};
    \end{tikzpicture}
\end{center}
where \( A^{\bullet},B^{\bullet},C^{\bullet},D^{\bullet} \) denote the stalks in the four regions cut out by the crossing, in order to be microsupported on the Legendrian with front this diagram we must have that the total complex (where maps are given by upwards generalization)
\[ A^{\bullet} \rightarrow B^{\bullet} \oplus C^{\bullet} \rightarrow D^{\bullet} \]
is acyclic.

Similarly near a fold
\begin{center}
    \begin{tikzpicture}
        \draw (0,0) to[out=-45,in=180] (2,-1);
        \draw (0,-2) to[out=45,in=180] (2,-1);
        \node (A) at (2.5,-1) {$A^{\bullet}$};
        \node (B) at (0.5,-1) {$B^{\bullet}$};
    \end{tikzpicture}
\end{center}
the map \( B^{\bullet} \rightarrow A^{\bullet} \rightarrow B^{\bullet} \) must factorize the identity.

Let us emphasize that this description is given \emph{having fixed a double stop} in order to pass to the Morse picture. In particular, the notion of stalk of a disk sheaf in a region in the front is not well defined until we fix such a basepoint.

\begin{remark}
    Strictly speaking in each case above the assertion is that, upon restriction to a small neighborhood of this diagram, every global sheaf is (quasi)isomorphic to one of the above forms. Since we work locally in the bifurcation space anyway, we are deliberately obscuring this subtlety.
\end{remark}

\subsubsection{Circular sheaves}\label{sec:codim-1-circles}
Circular sheaves cannot be so directly dismissed, but the approach is essentially the same (and by now standard). In this setting we are dealing with a front generic Legendrian in a cotangent bundle, so we can understand the (local) category of sheaves in terms of sheaves constructible with respect to a regular stratification of \( \Lambda_{loc} \). From this perspective, the local computation is identical to the usual (\( \R^2 \)) case, and we again arrive a the local crossing and fold conditions described above.

This said there is a crucial difference between the circular and disk cases hidden in the trick we applied to reduce the disk case to \( \R^2. \) By adding extra stops and thus distinguishing a basepoint, we are able to understand the local picture, but we've also broken some cyclic symmetry: for example, in order for a pairing to die in a fold, all that is required in the disk case is that the relevant strands are paired as they come into the fold, in any way. On the other hand, there are objects supported away from the fold, pairing the relevant strands in the circular case which do not extend over it. There are similar complications occur in the case of crossings. In sum, the local pictures are extremely similar, but have enough subtle differences to produce drastic changes in the global picture.

\begin{remark}\label{rem:ruling-by-sheaves}
    The upshot of the above discussions is a means of associating a ruling, in the sense of \cref{sec:cylindrical-rulings} to a front generic Legendrian given a microlocal-rank-1 sheaf supported on it (i.e. object of either \( \sh^{\C,1}_{\Lambda} \) or \( \sh^{\cyl,1}_{\Lambda} \)). In particular, at smooth points of the front symplectic reduction brings us to the sheaf category of either a marked disk, or positively marked cylinder. Then the canonical bar decomposition yields a pairing of the strands of the Legendrian over that point, perhaps with the added data of a path from one point to the other in the circular case, provided by the support of the stalk of a given bar. The fold and crossing conditions above ensure that the positive, Maslov conditions are imposed, though, in principle, there may be additional constraints imposed around more complicated front singularities.
\end{remark}

\subsection{Singular bar decompositions}\label{sec:singular-bars}
While giving complete combinatorial descriptions of sheaves near arbitrary front singularities is intractable, we can still give some reasonable combinatorial presentation of the local behavior: 
Let \( \Lambda \) be a Legendrian in \( \partial^{+} T^{*}X \times T^*\R^n, \) where \( X \in \set*{S^1,\R} \) and \( \mathcal{F} \in \sh^{\bullet}_{\Lambda}, \) where \( \bullet \in \set*{\cyl,\R} \) depending on the setting. Suppose moreover that \( \Lambda \) is front generic, and let \( U \subset X \) a proper open subset such that every front singularity of \( \Lambda \) has projection in \( U \times \R^n. \) Then by Gabriel's theorem, or \cref{cor:circular-decomposition}, \( \mathcal{F} \) splits canonically into a collection of bar-type objects with microsupport in \( X\setminus U \times \R^n, \) and a sheaf \( \mathcal{F}_{sing} \) with the property that the propagation across \( U, \) 
\[ \mathcal{F}_{sing}(U \times \R^n) \rightarrow \mathcal{F}_{sing}(\partial_+U \times \R^n) \]
is 0. Precisely, we have:
\begin{proposition}\label{prop:singular-bar-decomp}
    Let \( \mathcal{F},U \) be as above, \( K \supset U \) a compact, contractible subset of \( X \) disjoint from \( p(\Lambda), \) and \( h:X \rightarrow X \) a smooth, proper, surjective map of fiber bundles, which is fiberwise constant on \( U\times \R^n, \) and a diffeomorphism on \( X \setminus K\times \R^n. \) Then there is an isomorphism
    \[ \mathcal{F} \cong \mathcal{F}_{sm} \oplus \mathcal{F}_{sing} \]
    where \( \mathcal{F}_{sm} \) has (positive) microsupport disjoint from \( K\times \R^n \) and \( h_*\mathcal{F}_{sing} \) has a bar decomposition consisting exclusively of objects microsupported on \( h(U\times \R^n) \).
\end{proposition}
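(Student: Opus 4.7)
The plan is to identify within $\mathcal{F}$ the maximal summand whose local bar structure on $V := X \setminus \overline{U}$ extends globally as bars with microsupport avoiding $K \times \R^n$, and to show that the complementary summand collapses under $h$ to bars supported on $h(U \times \R^n)$. Over $V$ there are no front singularities by hypothesis, so the stabilization principle implies that $\m\mathcal{F}|_{V}$ is pulled back from its restriction to a transverse $V$-slice, and the local description is that of proper modules over a linear or cyclic quiver according as $X = \R$ or $X = S^1$. Theorem~\ref{thm:disk-pairings} (respectively Corollary~\ref{cor:circular-decomposition}) then supplies a canonical bar decomposition of $\m\mathcal{F}|_{V}$.

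Call a bar of this decomposition \emph{smooth} if its support is disjoint from a neighborhood of $K$, equivalently if its microsupport is disjoint from $K \times \R^n$. Such a bar extends uniquely to a globally defined object of $\sh^{\bullet}_{\Lambda}$ by extension-by-zero across $K \times \R^n$. Let $\mathcal{F}_{sm}$ denote the direct sum of all such extended smooth bars appearing in the decomposition (a finite sum by the properness hypothesis on $\mathcal{F}$). The remaining bars, which we informally think of as ``reaching into'' $U$, will assemble into $\mathcal{F}_{sing}$.

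The crux of the argument is to promote the local summand inclusion $\mathcal{F}_{sm}|_V \hookrightarrow \m\mathcal{F}|_V$, together with its canonical local retraction, to a global direct sum decomposition of $\mathcal{F}$. Both morphisms have $\mathcal{F}_{sm}$ as source or target, and $\mathcal{F}_{sm}$ has microsupport disjoint from $K \times \R^n$, so microlocal restriction to the opens away from $K \times \R^n$ is fully faithful on all $\hhom$ complexes involving $\mathcal{F}_{sm}$. The canonical local inclusion and retraction thus lift uniquely to global maps realizing $\mathcal{F} \cong \mathcal{F}_{sm} \oplus \mathcal{F}_{sing}$. By construction every bar in the decomposition of $\mathcal{F}_{sing}|_V$ has at least one endpoint abutting $\partial U$, so the iterated propagation across $V$ is zero when $X = \R$, and becomes zero after finitely many laps around the circle when $X = S^1$.

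For the pushforward step, $h$ is a diffeomorphism outside $K \times \R^n$ and collapses $U \times \R^n$ fiberwise, so proper pushforward $h_*$ leaves the structure over $h(V)$ untouched and sends each bar of $\mathcal{F}_{sing}$ — which necessarily had an endpoint at $\partial U$ — to one whose corresponding endpoint lies inside $h(U \times \R^n)$; combined with the vanishing of the iterated propagation established above, this forces every bar in the canonical decomposition of $h_*\mathcal{F}_{sing}$ to be microsupported on $h(U \times \R^n)$. The main obstacle is the splitting argument in the preceding paragraph: one uses the hereditary nature of the local category (Lemma~\ref{lem:periodic-inheritance}) to ensure the local direct sum lifts without higher obstructions, and the microsupport constraint on $\mathcal{F}_{sm}$ to guarantee uniqueness of the lift. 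Once this splitting is established, the pushforward analysis is essentially formal microlocal bookkeeping.
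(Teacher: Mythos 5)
Your approach is genuinely different from the paper's, but it has a gap in the definition of $\mathcal{F}_{sm}$ that appears to break the conclusion about $h_*\mathcal{F}_{sing}$.

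The paper first applies the collapse map $h_*$, lands in a quiver-representation category where a canonical bar decomposition is automatic, splits $h_*\mathcal{F} \cong \mathcal{G}\oplus\tilde{\mathcal{F}}_{sing}$ with $\mathcal{G}$ microsupported away from $h(\tilde U)$, sets $\mathcal{F}_{sm}:= h^{-1}\mathcal{G}$, and then transports a factorization of the identity on $\mathcal{G}$ back to $\mathcal{F}$ through the chain of adjunctions afforded by $\sigma,f,g,h$, invoking Lemma~\ref{lem:collapse-maps} and Lemma~\ref{lem:sum-iff-factorization}. You instead decompose $\m\mathcal{F}|_V$ first, pick out ``smooth'' bars, and extend by zero. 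Both are plausible strategies. The sticking point is your claim that a bar's support being disjoint from a neighborhood of $K$ is \emph{equivalent} to its positive microsupport being disjoint from $K\times\R^n$. This equivalence is false in general: a bar whose endpoints lie outside $K$ but whose support interval \emph{spans} $K$ (the ``long way around'' in the circular case, or a bar bridging the two components of $\R\setminus K$ in the line case) has positive microsupport away from $K$ yet support meeting $K$. The proposition's hypothesis only asserts that $K$ is disjoint from $p(\Lambda)$; it does not say $\mathcal{F}$ has zero stalk over $K$, so such long bars are permitted a priori, and they do arise for non-short $\mathcal{F}$ (cf.\ the length function in \cref{prop:bar-length}, which takes values in $[0,1)$, not $[0,1/2)$). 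With your criterion a long bar lands in $\mathcal{F}_{sing}$; but its endpoints remain outside $h(K)\supset h(\tilde U)$ after pushing forward by $h$ (which is a diffeomorphism away from $K$), so $h_*$ of it is a bar \emph{not} microsupported on $h(U\times\R^n)$. This contradicts the stated conclusion. Your later sentence that every bar in $\mathcal{F}_{sing}|_V$ ``has at least one endpoint abutting $\partial U$'' papers over exactly this: the restriction of a long bar to the contractible $V$ looks like two half-bars abutting $\partial V$, but after $h_*$ it reassembles into a single bar with both endpoints away from $h(\tilde U)$. Note that extension-by-zero, the other load-bearing move in your construction, also fails for long bars since they have nonvanishing stalk over $K$; so the correct $\mathcal{F}_{sm}$ cannot in general be produced that way --- the paper's $h^{-1}\mathcal{G}$ handles this because $h^{-1}$ makes the stalk over $\tilde U$ constant rather than zero.

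Two smaller points. The invocation of Lemma~\ref{lem:periodic-inheritance} (hereditarity) is misplaced: that lemma underwrites the existence of bar decompositions, not the lifting of a local direct-sum decomposition to a global one. What actually powers the lift in the paper is the full faithfulness of the localization functors $\sigma_*, f_*, g_*$ on $\sh^{\bullet}_{\Lambda_{sm}}$ together with Lemma~\ref{lem:collapse-maps}, which is roughly the ``microlocal restriction is fully faithful'' principle you gesture at --- but stated and used precisely via the adjunctions rather than asserted. If you replace your support criterion with the correct one (positive microsupport disjoint from $K$), construct $\mathcal{F}_{sm}$ via a pullback along $h$ rather than extension-by-zero, and tighten the lifting argument to the level of the paper's factorization-of-identity step, the proof would go through; but as written the plan does not establish the pushforward conclusion.
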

We call such a splitting a \textit{singular bar decomposition}.

Since we work with the circular and Morse categories, locally in the front in this section, we will write \( \sh \) instead of \( \mush \) throughout, understood as the category of sheaves with singular support conditions, identified by the canonical equivalence discussed in \cref{sec:microsheaves}.

We will use the following lemma:
\begin{lemma}\label{lem:collapse-maps}
    Let \( c:M \rightarrow M \) be a smooth map, \( U \subset \bar{U} \subsetneq M \) be a contractible open subset such that \( c\mid_{M\setminus U} \) is a diffeomorphism. Let \( \sh_{M\setminus U}(M) \) denote the full subcategory of sheaves with microsupport contained in \( 0_M \cup T^*(M\setminus \bar{U}). \) Then the restriction of the (derived) direct image functor \( c_* \) to this subcategory is fully faithful.
\end{lemma}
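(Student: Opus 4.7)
The plan is to apply the sheaf adjunction $c^{-1} \dashv c_*$ and observe that fully faithfulness of the restriction of $c_*$ to $\sh_{M \setminus U}(M)$ reduces, by a standard Yoneda argument, to showing the counit $c^{-1}c_*\mathcal{F} \to \mathcal{F}$ is an isomorphism in $\sh(M)$ for every $\mathcal{F} \in \sh_{M \setminus U}(M)$. Concretely, the induced map $\mathrm{Hom}(\mathcal{F},\mathcal{G}) \to \mathrm{Hom}(c_*\mathcal{F}, c_*\mathcal{G}) \cong \mathrm{Hom}(c^{-1}c_*\mathcal{F},\mathcal{G})$ is precomposition with the counit, so inverting it on a subcategory where targets $\mathcal{G}$ are sufficiently general follows from inverting the counit itself. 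Since being an isomorphism is a local condition, I would verify the counit at stalks.

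The key consequence of the microsupport hypothesis to exploit is that at any point of $\bar U$, the allowed microsupport $0_M \cup T^*(M \setminus \bar U)$ restricts to just the zero section, so $\mathcal{F}$ is locally constant on an open neighborhood $\tilde U$ of $\bar U$. Contractibility of $U$ then forces $\mathcal{F}\mid_U$ to be constant with value some $V \in \mathcal{C}$, and by connectedness of $\bar U$ every stalk of $\mathcal{F}$ on $\bar U$ is canonically identified with $V$.

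For the stalk check I would split into two cases. If $x \in M \setminus \bar U$, then $c$ is a local diffeomorphism near $x$, and for a sufficiently small neighborhood $W$ of $c(x)$ the preimage $c^{-1}(W)$ is identified with $W$ via $c$, yielding $(c_*\mathcal{F})_{c(x)} \simeq \mathcal{F}_x$ by inspection, so the counit is an iso at $x$. If $x \in \bar U$, then the stalk $(c_*\mathcal{F})_{c(x)}$ is computed as the colimit $\mathrm{colim}_{W \ni c(x)} \mathcal{F}(c^{-1}(W))$, and for $W$ sufficiently small, $c^{-1}(W)$ lies inside $\tilde U$, on which $\mathcal{F}$ is locally constant with generic value $V$. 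The sections thus evaluate to $V \simeq \mathcal{F}_x$, as desired.

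The main obstacle I anticipate is controlling the geometry of $c^{-1}(W)$ for $x \in \bar U$: strictly speaking, the hypothesis ``diffeomorphism on $M \setminus U$'' does not by itself prevent a remote point $y \in M \setminus \tilde U$ from having $c(y) = c(x)$, in which case the colimit above could pick up an extra contribution from a stalk of $\mathcal{F}$ supported far from $\bar U$. I would close this gap by using that the lemma is invoked in the proof of \cref{prop:singular-bar-decomp}, where the map $h$ is arranged to be the identity outside a slight thickening of $\bar U$, so no such remote collision occurs; equivalently, one may assume without loss of generality that $c$ is properly supported in a neighborhood of $\bar U$, which forces $c^{-1}(W) \subset \tilde U$ for $W$ small enough and closes the stalk calculation.
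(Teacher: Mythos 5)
Your strategy---reduce full faithfulness of $c_*$ to the counit $c^{-1}c_*\mathcal{F}\to\mathcal{F}$ being an isomorphism via the $c^{-1}\dashv c_*$ adjunction, then verify the counit on stalks using the microsupport constraint---is the standard route and matches what the paper's terse reference to KS~2.7.8 ``plus microsupport estimates'' is gesturing at (the paper gives no proof, only the citation). The adjunction bookkeeping is correct, and your case split at stalks (local diffeomorphism off $\bar U$, local constancy on $\bar U$) is the right decomposition.

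Two technical points deserve tightening. First, the microsupport hypothesis $\musupp(\mathcal{F})\subset 0_M\cup T^*(M\setminus\bar U)$ gives $\mathcal{F}|_U$ locally constant (and constant, by contractibility of $U$), but it does \emph{not} by itself give local constancy ``on an open neighborhood $\tilde U$ of $\bar U$'': nonzero microsupport may accumulate at $\partial U$ from the outside. What you really need, and what the hypothesis does supply, is that for $x\in\bar U$ the microsupport at $x$ is contained in the zero section, so by the non-characteristic deformation lemma (KS~2.7.2) the stalk $\mathcal{F}_x$ agrees with the constant value $V$ and the stalk computation of $(c_*\mathcal{F})_{c(x)}$ sees only $V$. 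Second, once you invoke a stalk/fiber formula for $c_*$, the fiber $c^{-1}(c(x))$ for $x\in\bar U$ is in general a thickening of $\bar U$, not of $U$; you implicitly need $R\Gamma(c^{-1}(c(x)),V)\cong V$, i.e.\ acyclicity of that fiber, which is not a formal consequence of $U$ being contractible. This is again fine in the intended application (fibers are points or closed balls), but should be flagged alongside the properness issue.

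Your observation about remote collisions is a genuine catch: as literally stated the lemma does not exclude a point $y\notin\bar U$ with $c(y)=c(x)$ (the diffeomorphism hypothesis on $M\setminus U$ only forbids collisions between two points of $M\setminus U$), and without properness of $c$ the fiber could also be noncompact. Your proposed repair---assume $c$ agrees with a diffeomorphism outside a compact neighborhood of $\bar U$ and is proper, as is arranged for the map $h$ in \cref{prop:singular-bar-decomp}---is the right fix and is what the application actually uses. It would be cleaner to build these hypotheses into the lemma statement rather than import them from context.
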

Whose proof follows from \cite{KashiwaraSchapira}*{Prop.~2.7.8}, and (some microsupport estimates...), and a standard homological algebra fact characterizing direct summands in triangulated categories:
\begin{lemma}\label{lem:sum-iff-factorization}
    Let \( A,B \in \mathcal{C} \) a pre-triangulated dg-category. Then there is a splitting
    \[ B \cong A \oplus C \]
    if and only if there is a factorization of the identity \( 1_A \)
    \[ \begin{tikzcd} A \arrow{r}{\iota} \arrow[bend right=20,swap]{rr}{\id} & B \arrow{r}{\varpi} & A. \end{tikzcd} \]
\end{lemma}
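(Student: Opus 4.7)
This is a standard fact about (pre-)triangulated categories applied to the homotopy category $[\mathcal{C}]$, so the plan is essentially to reduce it to a classical splitting lemma for distinguished triangles.

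The forward direction is immediate: given a splitting $B \cong A \oplus C$, the canonical inclusion and projection provide $\iota$ and $\varpi$ with $\varpi \circ \iota = 1_A$, and these clearly factor the identity of $A$ through $B$.

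For the reverse direction, I would argue as follows. Since $\mathcal{C}$ is pre-triangulated, we may pass to the triangulated homotopy category $[\mathcal{C}]$ and complete $\varpi: B \to A$ to a distinguished triangle
\[
C \longrightarrow B \xrightarrow{\ \varpi\ } A \xrightarrow{\ \delta\ } C[1],
\]
where $C$ is (the appropriate shift of) the fiber of $\varpi$. The hypothesis provides a morphism $\iota: A \to B$ with $\varpi \circ \iota = 1_A$ in $[\mathcal{C}]$, i.e.\ $\iota$ is a section of $\varpi$. The first step is then to note that $\delta \circ 1_A = \delta$ equals, up to sign, the composition $\delta \circ \varpi \circ \iota = 0$ (since consecutive maps in a triangle compose to zero), so $\delta = 0$.

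Next, I invoke the standard splitting lemma for triangulated categories: whenever the connecting morphism of a distinguished triangle vanishes, the triangle splits, yielding $B \cong A \oplus C$ with $\iota$ and $\varpi$ realizing the canonical summand inclusion and projection. Concretely, one builds the inverse isomorphism from $(\iota, j): A \oplus C \to B$ where $j: C \to B$ is the map appearing in the triangle; the verification that this is an isomorphism proceeds by the long exact sequence of $\hom$ out of an arbitrary test object, or equivalently by the five lemma applied to the comparison of triangles $A \to A\oplus C \to C \to A[1]$ and $A \to B \to C \to A[1]$.

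There is no real obstacle here: the only point requiring care is that everything takes place in the triangulated homotopy category rather than in $\mathcal{C}$ itself, so the factorization $\varpi \circ \iota = 1_A$ and the resulting splitting are to be understood up to quasi-isomorphism, which is precisely what is claimed by ``$B \cong A \oplus C$'' in a pre-triangulated dg-category.
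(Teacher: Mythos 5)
Your proof is correct. Note, however, that the paper itself offers \emph{no} proof of this lemma — it is introduced with the phrase ``a standard homological algebra fact characterizing direct summands in triangulated categories'' and then immediately invoked in the proof of the singular bar decomposition — so there is no argument in the paper against which to compare. The route you take (forward direction via the canonical inclusion/projection; reverse direction by completing $\varpi$ to a distinguished triangle in $[\mathcal{C}]$, observing that the section $\iota$ forces the connecting morphism $\delta = \delta\circ\varpi\circ\iota$ to vanish, and then applying the splitting criterion for triangles with zero boundary map via the comparison-of-triangles/five-lemma argument) is precisely the standard one that the author is implicitly appealing to, and it is carried out correctly, including the point that ``$\cong$'' in a pre-triangulated dg-category is to be read as isomorphism in the homotopy category.
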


\begin{proof}[Proof of \cref{prop:singular-bar-decomp}]
    We begin by fixing some more notation: Let \( \tilde{U} = U \times \R^n, \tilde{K} \) likewise, \( \Lambda_U := \Lambda \cap \tilde{U}, \) \( \Lambda_{sm} = \Lambda \setminus \Lambda_U, \) and let \( \Sigma \subset \tilde{K} \) be a smooth, graphical hypersurface disjoint from \( \Lambda \) with \( \Sigma >_{X} \Lambda_U \) in the fiberwise order induced on  \( \tilde{U} \) by the orientation of \( X \). Let
    \[ \sigma^*:\sh^{\bullet}_{\Lambda} \rightarrow \sh^{\bullet}_{\Lambda \sqcup \Sigma} \]
    denote the restriction of scalars functor defined by allowing microsupport along \( \Sigma, \) \( \sigma_* \) its left adjoint. Similarly, let \( f^*,f_* \) be the corresponding (left, right) adjoint pair defined via localizing along \( \Lambda_U \) (regarding \( \sh^{\bullet}_{\Lambda} \) as proper modules over the wrapped microsheaves). Similarly, let \( g^*,g_* \) denote the localizations along \( \Lambda_U \) defined on \( \sh_{\Lambda \cup \Sigma}. \) Note that \( f_*f^* \) is a fully faithful equivalence on \( \sh_{\Lambda_{sm}}, \) \( \sigma_*\sigma^* \) is equivalence on \( \sh_{\Lambda}, \) and \( g_*g^* \) is an equivalence on \( \sh_{\Lambda_{sm} \cup \Sigma} \) which preserves the full subcategory in the image of \( g_*\sigma^*f^*\sh_{\Lambda_{sm}}, \) of objects with microsupport contained in \( \Lambda_{sm}. \)

    We will abuse notation and write 
    \[ h_*: \sh^{\bullet}_{\diamond} \rightarrow \sh^{\bullet}_{h(\Lambda_{sm}) \sqcup h(\tilde{U})} \]
    for the direct image functor, where \( \diamond \) denotes any of the above subsets, and \( h^{-1} \) for its right adjoint. The target category of \( h_* \) is the same in each case; this follows from the singular support estimates of \cite{KashiwaraSchapira}*{Chapter~V}. Moreover, the target category consists of objects microsupported on a Legendrian with front projection a disjoint union of graphical hypersurfaces, so by isotopy invariance and stabilization the target category is (quasi-)equivalent to quiver representations. As such objects have canonical bar decompositions by one of \cref{cor:circular-decomposition} or Gabriel's theorem. In particular, there is a decomposition
    \[ h_*\mathcal{F} \cong \mathcal{G} \oplus \tilde{\mathcal{F}}_{sing}, \]
    with \( \mathcal{G} \) microsupported, away from \( h(\tilde{U}) \), and \( \tilde{\mathcal{F}}_{sing} \) as in the statement of the proposition. 

    Let us now record some additional relations between the various functors and sub-categories collected thus far. First, all of our operations (i.e. those induced by \( f,g,\sigma,h \)) act fully faithfully on \( \sh^{\bullet}_{\Lambda_{sm}} \) (regarded as the full subcategory of objects supported on \( \Lambda_{sm} \) or, equivalently, the image of this category under extension of scalars, or inverse image). Moreover, the actions of \( f,g,\sigma \) restricted to \( \sh^{\bullet}_{\Lambda_{sm}} \) commute with \( h_*, \) in the sense that, for example \( h_*g_* = h_*. \) In the case of \( f,g,\sigma \) this follows from anti-microlocalization (i.e. restricting microsupport is localization), and for \( h \) this follows from \cref{lem:collapse-maps}.

    Now let \( \tilde{\mathcal{G}} = h^{-1}\mathcal{G} \in \sh_{\Lambda_{sm}} \). By \cref{lem:sum-iff-factorization}, it suffices to show that there is a factorization of identity:
    \[ f^*\tilde{\mathcal{G}} \rightarrow \mathcal{F} \rightarrow f^*\tilde{\mathcal{G}} \]
    if and only if there is such a factorization
    \[ g_*\sigma^*f^*\mathcal{G} \rightarrow g_*\sigma^*\mathcal{F} \rightarrow g_*\sigma^*f^*\mathcal{G}. \]
    (Such a factorization always exists, since the image of \( g_* \) is equivalent to one of our standard categories of quiver representations by construction.)

    The ``only if'' forward direction is immediate by applying \( g_*\sigma^*. \) 
    
    For the converse, suppose we have such a factorization after localizing along \( \Lambda_{U}. \) By adjunction there is a natural isomorphism
    \[ \hom(\mathcal{F},f^*\tilde{\mathcal{G}}) \rightarrow \hom(g_*\sigma^*\mathcal{F},g_*\sigma^*f^*\tilde{\mathcal{G}}) \]
    since \( \sigma^* \) is a fully faithful embedding. On the other hand, the adjunction \( h^{-1},h_* \) gives an isomorphism
    \[ \hom(h^{-1}h_*f^*\tilde{\mathcal{G}},\mathcal{F}) \cong \hom(h_*f^*\tilde{\mathcal{G}},h_*\mathcal{F}). \]
    Let \( \tilde{\phi} \) denote the lift of \( \phi \) defined by this isomorphism.
    \( h_*,h^{-1} \) are actually an adjoint equivalence on \( \sh_{\Lambda_{sm}\cup \Sigma}, \) and so there is a canonical isomorphism
    \[ \hom(h^{-1}h_*f^*\tilde{\mathcal{G}},\mathcal{F}) \cong \hom(f^*\tilde{\mathcal{G}},\mathcal{F}). \]
    Let \( \tilde{\psi} \) denote the induced lift of \( \psi. \)

    Since \( h_*g_* = h_* \) on \( \sh^{\bullet}_{\Lambda_{sm}}, \) the lifts of the components of our factorization thus provided have the property that
    \[ g_*(\phi \circ \psi) = id. \]
    Then by full faithfulness of \( g_* \) on \( \sh^{\bullet}_{\Lambda_{sm}} \) the proposition follows.
\end{proof}

The upshot of this is a splitting of the objects of \( \mush^{\bullet}_{\Lambda} \) near a pointlike singularity for \( \bullet \in \set*{\C,\cyl} \) (the former by double stopping), which we will leverage in the next section.

\section{Small Legendrians}\label{sec:essential-rulings}
In this section we will prove \cref{thm:pushing-out}. Let \( \Lambda \subset \widehat{T^*\R^n} \) be a smooth, closed, front-generic Legendrian, and \( \mathcal{F} \in \sh^{\C,1}_{\Lambda} \) a micro-local rank 1 object supported there-on.

Before beginning recall that we have fixed an identification of \( \widehat{T^*\R^n} \) with \( T^*\R^n \times S^1 \), invariant under the Liouville flow on \( T^*\R^n \). In particular, we trivialize the \( S^1 \) factor over the base. We will now also fix a \textit{good cover} \( \mathbb{U} \) of \( \mathfrak{B} \) refining the cover by neighborhoods of strata induced by the singularity stratification\footnote{Recall that a good cover of a topological space is composed of contractible opens, all of whose finite intersections are also contractible. In the present setting, such a cover could be obtained by, for example, fixing a simplicial refinement of the singularity stratification}. Without loss of generality (replacing by a finer good cover if necessary), \( \mathbb{U} \) is partitioned:
\[ \mathbb{U} = \bigsqcup_{\mathfrak{x} \in \mathfrak{X}} \mathbb{U}_{\mathfrak{x}} \]
where \( \mathbb{U}_{\mathfrak{x}} \) consists of those elements of \( \mathbb{U} \) which intersect the stratum \( \mathfrak{x}, \) and no other stratum of equal or greater codimension. Moreover, we assume (without loss of generality by shrinking) that \( U \in \mathbb{U}_{\mathfrak{x}} \) intersects exactly the strata in the (open) star of \( \mathfrak{x}. \)

Our aim is to produce ``lifts'' \( \tilde{\mathcal{F}}(U) \) of \( \mathcal{F}\res{U} \) to \( \sh^{\cyl,1}_{\Lambda}\res{U} \) for each \( U \in \mathbb{U} \) which, regarded as functors from the \( \infty \)-category \( * \) with one object and contractible endomorphisms, after specifying homotopies over intersections, satisfy the homotopy limit diagram induced by \( \mathbb{U} \) (i.e. the limit over the \v{C}ech nerve of \( \mathbb{U} \) in the category of \( \infty \)-categories).

\subsection{Length maps}\label{sec:bar-lengths}
Throughout we will refer to the ``short'' lift, and ``lengths'' of bars over smooth points, and when checking that lifts over smooth loci are determined by the lift over a point in each component it will be necessary to make precise sense of this. The main statement is the following:
\begin{proposition}\label{prop:bar-length}
Every object \( \mathcal{F} \in \sh^{\bullet,1}_{\Lambda} \) determines a continuous map
\[ l_{\mathcal{F}}: \mathfrak{B} \rightarrow \sym(\R_{\geq 0}). \]
Moreover, over each component \( \mathfrak{U} \) of the smooth locus, \( l_{\mathcal{F}} \) factors through the inclusion \( \sym^{2k}(\R_{>0}) \hookrightarrow \sym(\R_{\geq 0}) \) (given by extension by 0) for some fixed \( k \geq 0 \).
\end{proposition}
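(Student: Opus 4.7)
The plan is to define $l_{\mathcal{F}}$ pointwise via the canonical bar decomposition. Fix a continuous parametrization of the prequantizing $S^1$ of length $1$ so that arc-distances between pairs of points on the fiber are well defined. Given $x \in \mathfrak{B}$, coisotropic reduction of $\Lambda$ along the cotangent fiber at $x$ is a closed subset of the $S^1$ fiber, finite and discrete over the smooth locus. By Corollary \ref{cor:circular-decomposition} in the cylindrical case, or Theorem \ref{thm:disk-pairings} together with Proposition \ref{prop:double-stopped-disks} in the disk case, the restriction $\mathcal{F}\res{x}$ admits a canonical decomposition into microlocal rank-$1$ bars, each pairing two strands of $\Lambda$ over $x$. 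I would define $l_{\mathcal{F}}(x)$ to be the multiset indexed by strands whose entry at each strand is the arc-distance to its bar partner (taking the short arc in the disk case, which is well defined by the uniqueness in Theorem \ref{thm:disk-pairings}).

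Continuity on the smooth locus $\mathfrak{U}$ should be immediate. Over each component, the number of strands $2k$ is constant, and paired strands are distinct over smooth points, giving $2k$ strictly positive entries and the factorization through $\sym^{2k}(\R_{>0})$. The bar pairing itself is locally constant by Lemma \ref{lem:stratum-local-constancy} and naturality of the canonical bar decomposition in the underlying object; combined with the smooth variation of $\Lambda$ this yields continuity of $l_{\mathcal{F}}$ on $\mathfrak{U}$.

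The essential step is continuity at a point $x_0 \in \mathbb{X}_\Lambda$, for which I would invoke Proposition \ref{prop:singular-bar-decomp}. Choose a contractible neighborhood $W \ni x_0$ on which every frontal singularity of $\Lambda$ is concentrated in an arc $U \subset S^1$ of length at most $\varepsilon$, and let $h$ collapse $U$. The proposition produces a splitting $\mathcal{F}\res{W} \cong \mathcal{F}_{sm} \oplus \mathcal{F}_{sing}$, where $\mathcal{F}_{sm}$ is microsupported on the sub-Legendrian with smooth front over $W$, and $h_*\mathcal{F}_{sing}$ has a bar decomposition concentrated at $h(U)$. The lengths contributed by $\mathcal{F}_{sm}$ vary continuously by the smooth case; each bar of $\mathcal{F}_{sing}$ has both endpoints in $U$, so contributes a length at most $\varepsilon$. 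Shrinking $W$ forces $\varepsilon \downarrow 0$, and the singular contribution converges in $\sym(\R_{\geq 0})$ to one copy of $0$ for each strand entering a singular bar, establishing continuity at $x_0$.

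The main obstacle will be showing independence of the definition from the auxiliary choices (double stop in the disk case, collapsing map $h$ in the singular case), which is needed to glue the local constructions into a single well-defined map. Independence from the double stop is exactly the uniqueness in Theorem \ref{thm:disk-pairings}; independence from $h$ reduces to showing that the isomorphism class of $\mathcal{F}_{sm}$ is intrinsic to $\mathcal{F}$, which should follow from the construction in the proof of Proposition \ref{prop:singular-bar-decomp} together with Gabriel-type uniqueness of bar decompositions (Corollary \ref{cor:circular-decomposition} and the classical theorem for $A_n$). Compatibility between the smooth and singular descriptions on $W \cap \mathfrak{U}$ is then automatic, since $\mathcal{F}_{sm}\res{W \cap \mathfrak{U}} \cong \mathcal{F}\res{W \cap \mathfrak{U}}$.
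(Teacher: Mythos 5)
Your proposal follows essentially the same strategy as the paper: define $l_{\mathcal{F}}$ over the smooth locus via the canonical bar decomposition, and then use \cref{prop:singular-bar-decomp} to split off a smoothly-varying part $\mathcal{F}_{sm}$ near singularities, leaving only the singular piece $\mathcal{F}_{sing}$ to analyze. Where you diverge is in how you treat $\mathcal{F}_{sing}$. The paper proceeds case-by-case through the codimension-$1$ models: for a fold it observes the two paired strands merge so the length tends to $0$; for a crossing it explicitly identifies and rules out the one potential discontinuity (partners switching across the crossing) by invoking the crossing acyclicity condition of \cref{sec:codim-1}; and higher-codimension strata are handled by a terse ``extension by limits.'' You instead argue uniformly: because $h_*\mathcal{F}_{sing}$ is microsupported over $h(U)$, every bar of $\mathcal{F}_{sing}$ pairs strands lying in $U$, so its length is controlled by $\lvert U\rvert$ and tends to $0$ as the bifurcation neighborhood shrinks. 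This is cleaner, but it leans fully on the microsupport conclusion of \cref{prop:singular-bar-decomp} (which in turn is what encodes the crossing/fold compatibility); it is worth being explicit that this is what eliminates the paper's potential pathology, since otherwise a reader might wonder why a bar of $\mathcal{F}_{sing}$ could not have one end outside $K$.

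One genuine gap in your argument: you ask to choose $W \ni x_0$ so that \emph{every} frontal singularity over $W$ is concentrated in an arc $U \subset S^1$ of length $\leq \varepsilon$. This is possible exactly when $x_0$ lies in the pointlike locus, since there $\tilde{\mathbb{X}}_{\Lambda}$ is locally connected over $W$. Over an XX stratum the fiber $\pi^{-1}(x_0)$ contains several frontal singularity points, which are generically far apart in the $S^1$ direction, so no small arc $U$ exists and \cref{prop:singular-bar-decomp} as you invoke it does not apply. You would need to either iterate the singular bar decomposition over several disjoint small arcs $U_1, \ldots, U_m$ (with some care that these splittings compose), or fall back to the paper's continuity-on-a-dense-subset argument using the Lipschitz control of \cref{lem:smooth-maximum}. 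Your discussion of independence from the double stop and the collapsing map $h$ is a useful addition that the paper leaves implicit.
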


\begin{proof}
    We begin by defining \( l_{\mathcal{F}} \) over the smooth locus. In this region \( l_{\mathcal{F}} \) is given via the canonical bar decomposition: over open stratum the bar decomposition is determined by reduction to any point (by stabilization and isotopy). Then at each smooth point in \( \mathfrak{B} \) we define \( l_{\mathcal{F}} \) to be the tuple of lengths of the bars in the decomposition at that point. In the disk case, these are the minimal distances on \( S^1 \) between the paired points, and in the circular case, this is the distance swept out by the support of the bar, read counterclockwise. This is clearly continuous, since the front is smooth over the smooth locus. Moreover any extension satisfies the factorization property by construction.

    It remains to extend over the singular locus. To do this, it suffices to understand the length in a neighborhood of a fold, and a simple crossing. In each case \cref{prop:singular-bar-decomp} shows that the component \( \mathcal{F}_{sm} \) has smooth length spectrum across the singular locus, so it remains only to check \( \mathcal{F}_{sing} \) (by it's construction, \( l_{\mathcal{F}} \) intertwines direct sum of sheaves with the semi-group structure on \( \sym(\R_{\geq 0}) \)).
    
    In the case of a fold \( \mathcal{F}_{sing} \) consists of a sheaf micro-supported on the two strands entering the fold and thus has length converging to 0 along the fold locus. In the crossing case the only potential failure of convergence is if the strands entering the singular locus are paired to each other on one side of the crossing, but are paired to some other strands on the other. This is ruled out by \cref{sec:codim-1}

    All other (generic) singularities appear as limit points of the codimension-1 singular locus, so the extension by limits provides a globally continuous map.
\end{proof}

\begin{definition}\label{def:length-spectrum}
    We call the function \( l_{\mathcal{F}} \) the length spectrum of \( \mathcal{F} \). 
\end{definition}

Note that if \( \mathcal{F} \in \sh^{\C,1}_{\Lambda} \) the length actually takes values in \( \sym([0,\frac{1}{2}]). \) 

Given the length we obtain a function \( L_{\mathcal{F}}: \mathfrak{B} \rightarrow \R_{\geq 0} \) which assigns to each point the maximal length of a bar. This is well defined by the factorization property of \cref{prop:bar-length} and compact support.

\begin{definition}
    We call \( \mathcal{F} \in \sh^{\bullet,1}_{\Lambda} \) \eps-short if \( L_{\mathcal{F}} < \frac{1}{2}-\epsilon \).

    Call a Legendrian \( \Lambda \) disk(cylindrically)-small if there exists \( \epsilon > 0 \) such that every object of \( \sh^{\C,1}_{\Lambda} \) (\( \sh^{\cyl,1}_{\Lambda} \)) is \eps-short. If both are true, call \( \Lambda \) small.
\end{definition}

We would like to relate a more geometric notion of ``size'' of a Legendrian to the shortness of sheaves on it.
\begin{definition}
    Let \( X \) be a smooth Riemannian manifold, and \( Y \) a metric space. A continuous function \( f: X \rightarrow Y \) is called directed-Lipschitz if there exists a continuous function 
    \[ B:STX \rightarrow \R_{\geq 0} \]
    such that
    \[ d\left(f(\exp_t(v)(x)) - f(x)\right) < tB(v) \]
    for any \( t \in \R_{\geq 0}, v\in ST_xX. \)
\end{definition}

The following lemma is evident from our definitions:
\begin{lemma}\label{lem:smooth-maximum}
    Over each component of the smooth locus, the function \( L_{\mathcal{F}} \) is directed-Lipschitz with Lipschitz map bounded by maximal difference between directional derivatives of the strands of the front, regarded as a graph over \( \mathfrak{B} \).
\end{lemma}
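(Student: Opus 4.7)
The plan is to reduce the statement to a pointwise calculation using the canonical bar decomposition. By \cref{prop:bar-length}, over a fixed component $\mathfrak{U}$ of the smooth locus, the length spectrum factors through $\sym^{2k}(\R_{>0})$ for a fixed $k$, so locally the number of bars is constant, and (by continuity of the bar decomposition on the smooth locus, together with the local constancy of the endpoints under symplectic reduction) each bar continuously identifies a fixed unordered pair of sheets of the front. Thus over a sufficiently small neighborhood of any $x \in \mathfrak{U}$ we may enumerate the bars $b_1,\ldots,b_k$, and for each $b_\alpha$ identify two graphical strands $s_{\alpha}^\pm: V \to Z$ ($Z = \R$ or $S^1$) of the front such that the length $l_{b_\alpha}(y)$ is the distance $d_Z(s_\alpha^+(y),s_\alpha^-(y))$.

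Next, I would compute the directional Lipschitz constant of each $l_{b_\alpha}$. Since $Z$ is locally isometric to $\R$ away from antipodes (which are excluded by the bound $l<1/2$ on a smooth stratum, or by choosing the counterclockwise lift in the circular case), for any $v \in ST_x\mathfrak{B}$ the fundamental theorem of calculus gives
\[ \abs{l_{b_\alpha}(\exp_t v(x)) - l_{b_\alpha}(x)} \leq \int_0^t \abs{ds_\alpha^+(v(r)) - ds_\alpha^-(v(r))}\, dr, \]
where $v(r)$ is the velocity along the geodesic. Setting
\[ B(v) := \max_{i \neq j} \abs{ds_i(v) - ds_j(v)}, \]
with the max taken over all strands $s_i$ of the front lying over the basepoint of $v$, each individual bar length is then bounded with respect to $B(v)$, and $B$ is a continuous function on $ST\mathfrak{U}$ by smoothness of the front over $\mathfrak{U}$.

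Finally, I would observe that the pointwise maximum of finitely many functions satisfying a common directed-Lipschitz bound satisfies the same bound: if $l_{b_\alpha}(y) - l_{b_\alpha}(x) < tB(v)$ for every $\alpha$, then in particular for the $\alpha$ achieving the max on the left we have $L_{\mathcal{F}}(y) - L_{\mathcal{F}}(x) \leq l_{b_\alpha}(y) - l_{b_\alpha}(x) < tB(v)$, while symmetry gives the reverse inequality. Hence $L_{\mathcal{F}}$ inherits the bound $B(v)$ and is directed-Lipschitz as claimed. The only subtle point is ensuring that the strand-pairing assignment $\alpha \mapsto (s_\alpha^+,s_\alpha^-)$ stays well defined as we vary $x$ over $\mathfrak{U}$; this is immediate from the already-established continuity of $l_{\mathcal{F}}$ on the smooth locus together with the discreteness of the set of strands over any fiber.
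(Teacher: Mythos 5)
The paper offers no proof of this lemma at all --- it is preceded by the remark ``The following lemma is evident from our definitions'' --- so there is no argument in the paper to compare against line by line. Your proof supplies exactly the argument the author is implicitly invoking: over a component of the smooth locus the canonical bar decomposition is locally constant, each bar tracks a fixed unordered pair of graphical sheets, each bar length is the (counterclockwise, in the cylindrical case) fiber distance between those sheets so that the fundamental theorem of calculus bounds its variation by the integrated slope difference, and a pointwise maximum of functions sharing a directed-Lipschitz bound inherits that bound.

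One small imprecision worth flagging, though it is inherited from the paper's own definition rather than introduced by you: you pass from
$\int_0^t \abs{ds_\alpha^+(v(r)) - ds_\alpha^-(v(r))}\,dr$
to $tB(v)$ with $B(v)$ evaluated at the \emph{basepoint} of $v$, whereas the integrand involves the slope differences at each point $v(r)$ along the geodesic. As written this only gives a bound by $t\sup_{r\in[0,t]} B(v(r))$. Since \cref{def:length-spectrum}'s definition of directed-Lipschitz asks for the bound $tB(v)$ with $B$ a continuous function on $ST\mathfrak{B}$ and in the only place the lemma is used (\cref{lem:small-means-small}) the slopes are globally bounded by $1-\epsilon$, the natural reading is that $B$ should be taken as (a continuous majorant of) the supremum of slope differences, and then your argument goes through verbatim. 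You might add one sentence to that effect, but the substance of your proof is correct and is precisely what the paper leaves to the reader.
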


As an immediate corollary we get a relationship between the size of a domain into which \( \Lambda \) embeds and the length of bars of a supporting sheaf:
\begin{corollary}\label{lem:small-means-small}
    If \( \Lambda \subset \widehat{D}(1-\epsilon, r_2,\ldots,r_n), \) then \( \Lambda \) is small.
\end{corollary}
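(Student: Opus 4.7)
The plan is to combine the Lipschitz estimate of \cref{lem:smooth-maximum} with the vanishing of bar lengths at folds to bound $L_{\mathcal{F}}$ uniformly below $\tfrac{1}{2}$.

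First I would identify the first factor of the pre-quantized polydisk $\widehat{D}(1-\epsilon,r_2,\dots,r_n)\subset\widehat{\C^n}$, away from its Reeb core, with $J^1(S^1;S^1)$ via polar coordinates $u_1+iv_1=re^{2\pi i\theta_1}$ and $\sigma_1=\pi r^2$, so that $\lambda_1=\sigma_1\,d\theta_1$ and the contact structure becomes $dt+\sigma_1\,d\theta_1$. The condition $\pi r^2<1-\epsilon$ translates to $p_1=\sigma_1\in(0,1-\epsilon)$, so every strand of the front of $\Lambda$ has $p_1$-slope in this interval, and any two strands satisfy $|p_1^{(1)}-p_1^{(2)}|<1-\epsilon$ pointwise.

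Next, fix $\mathcal{F}\in\sh^{\bullet,1}_{\Lambda}$ with $\bullet\in\set*{\C,\cyl}$. For each bar $b$ in a local bar decomposition of $\mathcal{F}$ and each generic line in the $\theta_1$-direction through the bifurcation space, the bar length $l_b(\theta_1)$ is continuous on $S^1$ by \cref{prop:bar-length} (using \cref{prop:singular-bar-decomp} to propagate across pointlike front singularities), and is Lipschitz on the smooth locus with constant bounded by $1-\epsilon$ by \cref{lem:smooth-maximum}. Bars are born and die at folds, where $l_b=0$. For a bar whose support in $\theta_1$ is a proper arc $[a,b]\subset S^1$, the endpoint vanishing $l_b(a)=l_b(b)=0$ combined with the Lipschitz bound gives, via a standard mean-value estimate, $\max l_b\leq \tfrac{1}{2}(1-\epsilon)(b-a)\leq \tfrac{1-\epsilon}{2}$. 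Taking the supremum over bars and slices yields $L_{\mathcal{F}}\leq \tfrac{1-\epsilon}{2}=\tfrac{1}{2}-\tfrac{\epsilon}{2}$, so $\mathcal{F}$ is $\tfrac{\epsilon}{2}$-short, and uniformity over $\mathcal{F}$ gives that $\Lambda$ is small.

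The main obstacle is a potential cylindrical bar whose support in $\theta_1$ is all of $S^1$, in which case no fold lies on the slice and the endpoint argument fails. To rule this out I would observe that each strand of $\Lambda$, being a graph $t=t(\theta_1)$ with slope $p_1\in(0,1-\epsilon)$, has total $t$-winding $\int_0^1 p_1\,d\theta_1<1$, hence winding zero as an element of $\pi_1(S^1_t)$; therefore $\int_{S^1}(p_1^{\mathrm{top}}-p_1^{\mathrm{bot}})\,d\theta_1=0$, so $l_b'$ has mean zero. A mean-zero/bounded-derivative estimate then gives oscillation $\leq\tfrac{1-\epsilon}{2}$, and the microlocal rank one assumption (together with the nilpotency resulting from vanishing at infinity) forces $l_b$ to touch zero somewhere on $\mathfrak{B}$, giving the same bound $\max l_b\leq \tfrac{1-\epsilon}{2}$.
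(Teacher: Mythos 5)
Your proposal takes a genuinely different route from the paper's, and it contains a logical inconsistency in the argument handling the circular base.

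The paper works in Cartesian coordinates, transporting the Lagrangian disk $D(1-\epsilon)\subset\C$ to a bounded rectangle in $T^*\R$ via an area-preserving map. Then both the momentum $p_1$ (the slope, bounding the Lipschitz constant of $L_\mathcal{F}$ via \cref{lem:smooth-maximum}) and the base coordinate $x_1$ (so that $L_\mathcal{F}$ vanishes outside a bounded interval) are controlled, and a one-line Lipschitz estimate gives $L_\mathcal{F}<\tfrac12-\tfrac{\epsilon}{4}$. Since $\mathfrak{B}=\R^n$ is simply connected in the $x_1$ direction, there is no ``winding'' case to worry about. Your proposal instead identifies $\C\setminus\{0\}$ with $T^*S^1$ via polar coordinates, turning $\mathfrak{B}$ into $S^1\times\R^{n-1}$, and this is what generates the extra case you have to handle.

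The problem is with how you handle it. You write that each strand has ``total $t$-winding $\int_0^1 p_1\,d\theta_1<1$, hence winding zero,'' and then deduce $\int_{S^1}(p_1^{\mathrm{top}}-p_1^{\mathrm{bot}})\,d\theta_1=0$. But these two claims are contradictory on their face: with your convention $p_1=\sigma_1=\pi r^2>0$ away from the Reeb core, so $\int_0^1 p_1\,d\theta_1$ is \emph{strictly positive}, hence cannot equal an integer winding number of zero. The correct conclusion from the winding observation is not that $l_b'$ has mean zero; it is that \emph{no strand can close up into a global graph over $S^1_{\theta_1}$} at all, so the ``bar supported over all of $S^1$'' case is simply vacuous, and the proper-arc two-sided Lipschitz estimate applies everywhere. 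Your subsequent oscillation bound and the appeal to ``$l_b$ touches zero somewhere on $\mathfrak{B}$'' (at a possibly different slice, which does not directly combine with the oscillation bound on the given slice) are trying to salvage a case that does not occur, and neither step is airtight as written. You should also address what happens when $\Lambda$ meets the Reeb core $\{r_1=0\}$, where polar coordinates degenerate; the Cartesian framing used in the paper sidesteps this entirely. Both approaches arrive at the same bound $L_\mathcal{F}\le\tfrac{1-\epsilon}{2}$, but the paper's version avoids the circular topology and the attendant bookkeeping.
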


\begin{proof}
The slope of the front in the \( x_1 \) direction is bounded between 0 and \( 1-\epsilon \), and \( L_{\mathcal{F}} \) vanishes outside of \( [0,1-\epsilon] \times \R^{n-1} \), so by \cref{lem:smooth-maximum} \( L_{\mathcal{F}}(x_1,\ldots,x_n) \leq (1-\epsilon)\min_{x \in \R\setminus[0,1-\epsilon]} |x_1 - x| < \frac{1}{2} - \frac{\epsilon}{4}. \)
\end{proof}

\subsection{Lifting over simple strata}\label{sec:compatible-pointlike-lifts}
The length map provides a convenient means of checking the isomorphism type of a lift of a disk sheaf obtained from double stopping. In particular, let \( \Xi \subset \widehat{T^*\R^n} \) be a bounded Legendrian with connected, contractible singular locus \( \tilde{\mathbb{X}}_{\Xi} \) having a unique smooth stratum of maximal codimension, \( \tilde{\mathfrak{x}} \). Let \( \mathcal{F} \in \sh^{\C,1}_{\Xi}, \) and suppose \( \mathcal{F} \) is \eps-short for some \eps. Under our assumptions, \( \mathcal{F} \) admits a global singular bar decomposition. Suppose that \( \mathcal{F} \cong \mathcal{F}_{sing} \) (i.e. there are no locally smooth components).

Since the singular locus is contractible, \eps-shortness implies that, near the pre-image of \( \pi(\tilde{\mathfrak{x}}) \), \( p(\Xi) \) is disjoint from an \eps-neighborhood of the antipodal image of \( \tilde{\mathfrak{x}}. \) In particular there exist sections of the bifurcation projection \( \pi:F\setminus p(\Xi) \rightarrow \mathfrak{B}. \) Via \cref{prop:double-stopped-disks} each homotopy class of sections yields a lift of \( \mathcal{F} \) to the cylindrical category. 

Let \( \sigma \) denote such a homotopy class, \( \DS_{\sigma} \) the double stopping functor. Observe that, by the construction of the double stopped lift, for any \( \sigma \) the length spectrum \( l_{\DS_{\sigma}(\mathcal{F})} \) takes vales in \( \sym([0,1)). \) Moreover, if \( \sigma \) denotes the class of the antipode to \( \mathfrak{x}, \)
\[ l_{\DS_{\sigma}(\mathcal{F})} = l_{\mathcal{F}} \]
by \eps-shortness: over points in open strata sufficiently near to \( \tilde{\mathfrak{x}}, \) there exists an interval \( I \subset S^1 \) of length less than \( \frac{\epsilon}{4} \) such that every bar has (at least) one end lying in \( I. \) It follows that the shortest path connecting endpoints of the bars never traverses (the sub-interval of the complement to \( \Xi \) containing) the antipode to \( \tilde{\mathfrak{x}}, \) and so we have the claimed length equality. 

By this argument find that \textit{no other} double stopped lift of \( \mathcal{F} \) has this length spectrum: any other choice of section intersect the short path traversing one of the bars (since it must be separated from the antipode by some components of \( \Lambda \)) and thus the lift of this bar must have length at least \( \frac{1}{2}. \) We have proven the following
\begin{proposition}\label{prop:short-singular-lift}
    Let \( \mathcal{F} = \mathcal{F}_{sing} \) be as above. There is a unique isomorphism class \( \tilde{\mathcal{F}} \in \sh^{\cyl}_{\Xi} \) which can be obtained from \( \mathcal{F} \) via double stopping which satisfies 
    \[ l_{\mathcal{F}} = l_{\tilde{\mathcal{F}}}. \]
\end{proposition}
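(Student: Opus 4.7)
The plan is to exploit $\epsilon$-shortness of $\mathcal{F}$ together with contractibility of $\tilde{\mathbb{X}}_{\Xi}$ to single out one canonical homotopy class of section of the bifurcation projection $\pi:F \setminus p(\Xi) \to \mathfrak{B}$, namely the class whose image runs antipodally to the cluster where all bars of $\mathcal{F}$ concentrate. Double stopping along this section will produce the desired lift; every other homotopy class of section will be ruled out because it forces at least one cylindrical bar to wrap the long way around the $S^1$ fiber.

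First I would establish existence. By $\epsilon$-shortness and $\mathcal{F} = \mathcal{F}_{sing}$, applying the length map $l_{\mathcal{F}}$ of \cref{prop:bar-length} shows that, in each fiber of $\pi$ over points near $\pi(\tilde{\mathfrak{x}})$, every disk-type bar of $\mathcal{F}$ connects endpoints whose short path on $S^1$ has length at most $\frac{1}{2}-\epsilon$. Combined with connectedness of the singular locus and continuity of $l_{\mathcal{F}}$, this traps the support of $\mathcal{F}$ inside an arc of width $\frac{1}{2}-\epsilon$ in each fiber over a neighborhood $V$ of $\pi(\tilde{\mathfrak{x}})$, so the antipodal arc of width $\epsilon$ avoids $p(\Xi)$ over $V$. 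Because $\tilde{\mathbb{X}}_{\Xi}$ is connected and contractible, this partial section extends to a global smooth section $\sigma_0$ of $\pi$ on the complement of $p(\Xi)$. Set $\tilde{\mathcal{F}} := \DS_{\sigma_0}(\mathcal{F})$ via \cref{prop:double-stopped-disks}. Using the local codimension-1 description of \cref{sec:codim-1-circles} together with the singular bar decomposition of \cref{prop:singular-bar-decomp}, I would verify that the cylindrical bars produced by this lift cover exactly the short path between paired endpoints, so $l_{\tilde{\mathcal{F}}} = l_{\mathcal{F}}$ on the smooth locus; continuity and the extension-by-limits argument of \cref{prop:bar-length} then promote equality to all of $\mathfrak{B}$.

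Next I would establish uniqueness. Any other homotopy class $\sigma$ of section differs from $\sigma_0$ by an integer number of crossings through some strand of $\Xi$ (over each connected component of $\mathfrak{B}$), since the complement of $p(\Xi)$ fiberwise is a disjoint union of arcs on $S^1$. If the double stop $\sigma$ lies inside the short arc joining the endpoints of some bar $b$ of $\mathcal{F}$, then $\DS_\sigma$ necessarily realizes $b$ as the cylindrical bar traversing the complementary (long) arc, contributing at least $\frac{1}{2} > \frac{1}{2}-\epsilon$ to $l_{\DS_\sigma(\mathcal{F})}$. Since every such $\sigma$ must separate the paired endpoints of at least one bar (because $\sigma_0$ was by construction the unique class lying in the complement of the cluster), we conclude $l_{\DS_\sigma(\mathcal{F})} \neq l_{\mathcal{F}}$, and uniqueness follows.

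The main obstacle is the middle step: making rigorous the claim that $\DS_{\sigma_0}$ returns exactly the short bars. The local sheaf-theoretic content of \cref{sec:codim-1} determines the pairings of strands but a priori not the homotopy class of the marking path, and one must combine the local pairing data with the global geometric position of $\sigma_0$ to extract the isomorphism class of each cylindrical bar. The cleanest way I see to handle this is to argue fiberwise via the circular-bar classification of \cref{cor:circular-decomposition}: once the endpoints of each cylindrical bar are fixed and the stop $\sigma_0$ has been placed antipodally, only the short path is homotopically available for the support of the bar. A secondary technical point is that the good global section $\sigma_0$ relies on both $\epsilon$-shortness and connectedness of the singular locus; the argument collapses for non-connected singular loci, which is why the proposition is stated only in this restricted setting.
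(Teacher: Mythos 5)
Your proposal follows the same line of reasoning as the paper: use $\epsilon$-shortness together with $\mathcal{F}=\mathcal{F}_{\mathrm{sing}}$ (so all bars degenerate on the singular locus) and contractibility of $\tilde{\mathbb{X}}_{\Xi}$ to locate a region fiberwise antipodal to the cluster of bar endpoints, take the double stop there to get a lift with matching length spectrum, and then observe that any non-antipodal section necessarily lands in the short arc of some bar, forcing that bar's cylindrical lift to have length at least $\frac{1}{2}$ and thus ruling out length-spectrum equality. This matches the paper's argument in structure and substance; the paper phrases the concentration step as ``there is an interval $I$ of length $<\epsilon/4$ near $\tilde{\mathfrak{x}}$ containing one end of every bar,'' while you phrase it as ``both endpoints trapped in an arc of width $\frac{1}{2}-\epsilon$,'' but these are equivalent up to bookkeeping once shortness is invoked. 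Your concluding observations — that the homotopy class of the marking path is pinned down fiberwise once the stop is placed, and that the contractibility/connectedness hypothesis is genuinely needed — are also consistent with the paper's treatment.
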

An additional corollary of the above analysis exactly characterizes the lifts of a \eps-short bar type object:
\begin{corollary}\label{cor:short-bar-lift}
    Suppose \( \Xi \) is a disjoint union of two graphical hypersurfaces, and \( \mathcal{F} \in \sh^{\C,1}_{\Xi} \) (thus a bar type object) is \eps-short. Then \( p(\Xi) \) is disjoint from its antipodal image, and the double stopped lift determined by any hypersurface lying in the component of either (both) antipodal point is short, and the (unique) other lift, given by the other component of the complement, is not.
\end{corollary}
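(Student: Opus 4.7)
The plan is to translate the $\epsilon$-shortness hypothesis into a direct geometric statement about $p(\Xi) \subset F \cong \R^n \times S^1$, and then to use the construction of the double stopped lift to read off the length spectrum of each candidate cylindrical lift, thereby separating the short from the long.

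First, write $p(\Xi)$ as the union of two graphs $f_1, f_2 : \mathfrak{B} \to S^1$. Since $\mathcal{F}$ is a microlocal-rank-1 bar, the length function $l_{\mathcal{F}}(x)$ is just the shortest $S^1$-distance between $f_1(x)$ and $f_2(x)$, and the hypothesis $L_{\mathcal{F}} < \tfrac{1}{2}-\epsilon$ forces the pointwise inequality $|f_1(x)-f_2(x)|_{S^1} < \tfrac{1}{2}-\epsilon$. This is exactly the statement that $p(\Xi)$ is disjoint from its antipodal image. As a consequence, $F\setminus p(\Xi)$ splits into two connected components $C_+\sqcup C_-$, each projecting surjectively onto $\mathfrak{B}$: in every fiber of $\pi$, $C_+$ is the short arc of length $<\tfrac{1}{2}-\epsilon$ and $C_-$ is the long arc of length $>\tfrac{1}{2}+\epsilon$, and both antipodal graphs $f_i+\tfrac{1}{2}$ lie entirely in $C_-$.

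Next, for each sign, choose a section of $\pi$ inside $C_\pm$ and push off to obtain a double stop $H_\pm$. Applying \cref{prop:double-stopped-disks} produces cylindrical lifts $\tilde{\mathcal{F}}_\pm\in \sh^{\cyl,1}_\Xi$. The core observation, which parallels the argument preceding \cref{prop:short-singular-lift}, is that under the cobordism used to construct the equivalence, the two Reeb push-offs of $H_\pm$ act as walls across which the lifted sheaf cannot propagate. Consequently $\tilde{\mathcal{F}}_-$, built from a double stop inside the antipodal component $C_-$, has cylindrical support on the short arc $C_+$ and length spectrum $l_{\tilde{\mathcal{F}}_-}=l_{\mathcal{F}}$, hence is $\epsilon$-short; dually, $\tilde{\mathcal{F}}_+$, built from a stop in $C_+$, has support on $C_-$ and length spectrum everywhere exceeding $\tfrac{1}{2}+\epsilon$, so it fails to be $\epsilon$-short. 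Since the space of sections of $\pi$ contained in a fixed component is contractible, the isomorphism class of $\tilde{\mathcal{F}}_\pm$ depends only on the component, and \cref{thm:disk-pairings} together with its cylindrical counterpart confirms that these exhaust the possible double stopped lifts.

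The hard part will be the wall statement, namely identifying the fiberwise support of $\DS_{H_\pm}(\mathcal{F})$ as the arc complementary to $H_\pm$. This needs to be extracted from the cobordism $(\C\times T^*\R^n, \Xi\sqcup H_\pm^\pm) \rightleftarrows (T^*\R\times T^*\R^n, \Xi)$ of \cref{prop:double-stopped-disks}: the two Reeb copies of $H_\pm$ play exactly the role of the stalks at $\pm\infty$ in the Morse category, so a bar $\mathcal{F}$ with vanishing microstalks along them is forced to live on the complementary arc after lifting. Once this is granted, the length equalities and the $\epsilon$-short versus long dichotomy follow immediately from \cref{prop:bar-length} and the geometry of $C_\pm$.
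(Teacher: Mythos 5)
Your argument is correct and follows essentially the same route as the paper's analysis preceding \cref{prop:short-singular-lift}, specialized to the case of two graphical sheets: the $\epsilon$-shortness hypothesis gives pointwise antipodal disjointness of the two graphs, the complement of the front fibers into a short and a long arc, and the support of the Morse bar lands on the arc complementary to the double stop (this is exactly the identification, in the proof of \cref{prop:double-stopped-disks}, of the microstalks at $H^{\pm}$ with the stalks at $\pm\infty$). The one piece of housekeeping you correctly flag — that there are only two isomorphism classes of double-stopped lifts, one per component — is what the paper bakes into \cref{prop:double-stopped-disks} ("depends only on the component $U$"); your contractibility-of-sections remark gives the same conclusion.
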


With these in hand we are ready to carry out our lifting procedure, and verify that the resulting objects have compatible isomorphism types. For the remainder of this section, let \( \Lambda \subset \widehat{T^*\R^n} \) be a smooth, closed, front generic Legendrian, and \( \mathcal{F} \in \sh^{\C,1}_{\Lambda} \) a \eps-short microlocal rank-1 sheaf. 

\subsubsection{Open strata}\label{sec:smooth-lifts}
We begin with the open strata of the bifurcation stratification, over each component of which the front \( p(\Lambda) \) is a disjoint union of smooth hypersurfaces, which are graphical over \( \mathfrak{B}. \) Let \( U \in \mathbb{U} \) be an element of our cover which is contained in an open stratum. \( \mathcal{F} \) has a bar decomposition over \( U \) by \cref{thm:disk-pairings} and this has a unique isomorphism class of short circular lift over \( U \) by \cref{cor:short-bar-lift}. Denote this lift by \( \tilde{\mathcal{F}}^{U}. \)

It remains to check that our lifts glue: Let \( \mathfrak{x} \) be an open stratum, \( \set*{U_i} \subset \mathbb{U}_{\mathfrak{x}} \) be a finite subset of \( \mathbb{U} \) such that \( \bigcap U_{i} \neq \emptyset. \) Then the bar decomposition of \( \mathcal{F}\res{\bigcap U_i} \) is canonical, thus isomorphic to that induced by the restrictions from each \( U_i \), and so the restriction of short lifts are all isomorphic to the short lift of \( \mathcal{F}\res{\bigcap U_i}, \) by \cref{cor:short-bar-lift}. This gives the required compatibility over the open strata.

\subsubsection{Pointlike singularities}\label{sec:pointlike-sing}
Next we turn to lifting over the pointlike strata. We induct on the codimension of the strata (with the base case being codimension 0, handled by \cref{sec:smooth-lifts}). To this end, suppose we have constructed, and verified compatibility up to isomorphism of short lifts over the pointlike strata of codimension up to \( n, \) and let \( \mathfrak{x} \) be a smooth codimension \( n \) stratum of the pointlike locus. Let \( U \in \mathbb{U}_{\mathfrak{x}} \) an element of our good cover.

By assumption, \( \mathcal{F}\res{U} \) has a singular bar decomposition 
\[ \mathcal{F}\res{U} \cong \mathcal{F}^U_{sing} \oplus \mathcal{F}^U_{sm}, \]
which has a unique isomorphism type of short double stopped lift in \( \m\sh^{\cyl}_{\Lambda}(U) \) by \cref{prop:short-singular-lift,cor:short-bar-lift}. Let \( \tilde{\mathcal{F}}^{U} \in \m\sh^{\cyl}_{\Lambda}(U) \) denote a lift obtained via antipodal double stopping (component-wise with respect to this decomposition).

Having produced such a lift, there are two types of compatibility we must check. First, there is compatibility with the lifts thus produced over other elements of \( \mathbb{U}_{\mathfrak{x}}, \) which may be verified by an (essentially) verbatim repetition of the argument of \cref{sec:smooth-lifts}. There is another, new, type of compatibility to verify, which is that over intersections of \( U \) with elements of \( \mathbb{U}_{\mathfrak{y}_i} \) for some \( \mathfrak{y}_i \subset \mathbb{X}_{\Lambda} \) strata of smaller codimension. 

Note that, by our inductive hypothesis, it suffices to check for \( \mathfrak{y} \) of codimension \( n-1, \) and by smoothness of the stratification, these may be checked one at a time: for two (distinct) such strata \( \mathfrak{y}_i \), corresponding opens \( U_i \in \mathbb{U}_{\mathfrak{y}_i} \) we necessarily have \( U_1 \cap U_2 = \emptyset. \)

With this in mind, let \( \mathfrak{y} \) be a smooth stratum of codimension \( n-1 \) and \( V \in \mathbb{U}_{\mathfrak{y}} \) a cover element such that \( W := U \cap V \neq \emptyset. \) Passing to a neighborhood of a pointlike front singularity of lower codimension, the singular bar decomposition over \( U \) admits a natural refinement over \( W: \)
\[ \mathcal{F}\res{W} \cong \mathcal{F}^U_{sing}\res{W} \oplus \mathcal{F}^U_{sm}\res{W} \cong \left(\mathcal{F}^U_{sing}\res{W}\right)_{sing} \oplus \left(\mathcal{F}^U_{sing}\res{W}\right)_{sm} \oplus \mathcal{F}^U_{sm}\res{W} \]
by applying the singular bar decomposition over \( W \) to \( \mathcal{F}^U_{sing}. \) On the other hand, by local constancy, the singular bar decomposition of \( \mathcal{F}\res{V} \) is preserved by the restriction to \( W. \)

Now by the characterization of the splitting given in \cref{prop:singular-bar-decomp}, and the isomorphism
\[ \left(\mathcal{F}\res{V}\right)\res{W} \cong \mathcal{F}\res{W} \cong \left(\mathcal{F}\res{U}\right)\res{W} \]
there exists a split isomorphism
\[ \left(\mathcal{F}^U_{sing}\res{W}\right)_{sing} \oplus \left(\left(\mathcal{F}^U_{sing}\res{W}\right)_{sm} \oplus \mathcal{F}^U_{sm}\res{W}\right) \cong \mathcal{F}^W_{sing} \oplus \mathcal{F}^W_{sm}. \]
Since double stopping commutes with restriction this isomorphism lifts to an isomorphism of short lifts by the uniqueness of \cref{prop:short-singular-lift}.

This verifies the required compatibility for lifting over the pointlike singularities of \( \Lambda. \)

\subsection{Splittings and homotopies}
Let \( \mathcal{C} \subset \mathcal{D} \) be \( \infty \)-categories, such that the inclusion is a homotopy equivalence. Let \( \iota:\mathcal{C}\rightarrow \mathcal{D} \) denote this inclusion, and let \( \id:\mathcal{D} \rightarrow \mathcal{D} \) denote the identity functor. Define the \textit{category of splittings}
\[ Sp(\mathcal{D},\mathcal{C}) := \left( \id \sim \iota \right) \]
as the iso-comma category of the identity over this inclusion. Each of these projections are acyclic Kan fibrations, and thus admit sections, the space of which is contractible.

In more down-to-earth language, \( Sp(\mathcal{D},\mathcal{C}) \) can be regarded as the category whose objects are triples \( (d,c,f) \) where \( d \in \mathcal{D}, \) \( c \in \mathcal{C}, \) and \( f:d \rightarrow c \) is an isomorphism, and whose morphisms are commuting diagrams
\[
    \begin{tikzcd}
        d \arrow{r}{f} \arrow{d}{g} & c \arrow{d}{g'}\\
        d' \arrow{r}{f'} & c'
    \end{tikzcd}.
\]
Indeed, this is the corresponding construction on the level of homotopy categories, and has the desired properties in our setting. One can check (see, e.g. \cite{HTT}) that the \( \infty \)-categorical incarnation described above simply provides a lift of these to the homotopy-coherent world.

In our setting, we have sheaves of \( \infty \)-categories \( \m\sh^{\C},\m\sh^{\cyl} \) each on the open site of \( \mathfrak{B}. \) The results of \cref{sec:combinatorics} present \( \m\sh^{\bullet}(U) \) as (quasi-)equivalent to the inclusion of a full subcategory over a base of contractible opens. The above argument then yields an isomorphic sheaf of \( \infty \)-categories taking values in the corresponding categories of splittings over the elements of our base. In particular, every object of the global sections can be equipped with a canonical (up to contractible choice) family of local splittings which are compatible with restriction. 

This presents every disk object as one which is canonically split into (singular) bar-type objects, equipped with canonical (higher) homotopies between restrictions of splittings. This is precisely the data necessary for our lifting techniques, which we have seen in the previous sections yield canonical isomorphism classes short lifts when our object is globally short. In order to produce a global object by lifting this way, one must verify that these lifts can be assembled into homotopy coherent descent data (i.e. a functor from the \v{C}ech nerve of our good cover), and thus that splittings into bars can be so prescribed. The argument above shows that, before lifting, we may present any object in such a way.

In fact, given an object of \( \sh^{\C}_{\Lambda} \) presented as homotopy coherent descent data equipped with splittings, we may further assume that our homotopies respect splittings: Morphisms of split objects are themselves split by pairs of components, and isomorphisms are exactly those morphisms whose restriction to each endomorphism component are themselves isomorphisms. Now given two bars (or a bar and singular bar) \( b,b' \) with disjoint microsupport, it follows from the quiver presentation of \cref{sec:combinatorics} that any degree 0 morphism \( b \rightarrow b \) which factors through \( b' \) is trivial (there are non-trivial degree morphisms of simple such bars in at most one direction). This weak orthogonality implies that, given coherent split descent data, we may simply project out the non-endomorphism components of our homotopies and obtain coherent descent data for the same object which respects the splitting data. Said another way, we can always modify the splitting morphisms \( f \) (in the notation used above) such that morphisms on the split side have no cross terms.

Equipped with this data, we can check that these homotopies lift according to the procedure of the preceding sections: A \(n\)-homotopy is the data of an element \( \eta \) of \( \hom^{n+1}(\mathcal{F},\mathcal{G}) \) realizing the appropriate (signed) linear combination (arising from some simplicial presentation) of \( (n-1) \)-homotopies as the image under boundary of \( \eta \) (the simplest case is when the relevant object is a 2-simplex, when we recover the usual notion of chain homotopy). In light of the previous paragraph, it suffices to check that the endomorphism algebras of bars and singular bars lift, which is immediate by \cref{prop:double-stopped-disks}. Thus, starting with an object presented as nice split descent data as above, the lifting constructions of the previous two subsections suffice to obtain coherence up to all higher homotopies over the pointlike locus, and it remains only to verify that this may be extended over the XX locus. We check this in the next subsection.

\subsection{Lifting the XX locus}\label{sec:XX-sing}
It remains to produce circular lifts over the XX locus of the bifurcation stratification. Again we will induct on the codimension of the strata. Recall that the XX-locus has minimal codimension 2. 

The setting is as follows: Let \( \mathfrak{x} \) be a smooth XX-type type stratum of \( \mathbb{X}_{\Lambda}, \) \( U \in \mathbb{U}_{\mathfrak{x}}. \) By assumption, \( \mathfrak{x} \) is the intersection of the image of some finite collection \( \tilde{\mathfrak{x}}_i \) of smooth strata of the frontal stratification \( \tilde{\mathbb{X}}_{\Lambda} \) under the front-to-bifurcation projection \( \pi. \) In particular, over \( U \) the singular locus \( \tilde{\mathbb{X}}_{\Lambda}\cap U \) may be assumed to be a finite disjoint union of contractible smoothly stratified subspaces.

By \cref{lem:stratum-local-constancy} \( \m\sh^{\bullet}_{\Lambda}(U) \) is computed (via the canonical equivalence induced by reduction) by \( \m\sh^{\bullet}_{\Lambda}\res{\mathfrak{S}} \) where \( \mathfrak{S} \subset U \) is a smooth, relatively clopen disk transverse to \( \mathbb{X}_{\Lambda} \) and intersecting \( \mathfrak{x} \) at a point \( x. \) More precisely, \( \Lambda \cap U \) is isotopic to a stabilization of \( \Lambda\res{\mathfrak{S}} \)\footnote{By this we denote the image of \( \Lambda \) under reduction along the pre-image of \( \mathfrak{S} \) along \( \Pi. \)}.

In fact, since \( \mathfrak{x} \) is of XX-type, \( \m\sh^{\bullet}_{\Lambda}(U) \) can be computed in terms of an even more refined subset: Let \( L_x := \partial\mathfrak{S} \) denote the boundary sphere to our slice (this is the link of our XX-singularity in the slice). Let \( H \subset T_{x}\mathfrak{S} \) be a hyperplane transverse to the strata in the star of \( \mathfrak{x}, \) and \( \mathfrak{H} \subset \mathfrak{S} \) a smoothly embedded relatively clopen disk transverse to the star, passing through \( x \) with tangent plane \( H. \) Such a disk exists locally by exponentiation and can be extended over \( \mathfrak{S} \) by local triviality of the front over \( U. \) 

Such \( \partial\mathfrak{H} \) bisects \( L_x \) into a pair of hemispheres, \( L^{\pm}_x, \) each isotopic relative \( \partial\mathfrak{H} \) to \( \mathfrak{H} \) via hypersurfaces transverse to \( \mathbb{X}_{\Lambda} \cap \mathfrak{S}. \) In particular, \( \Lambda\mid_{\mathfrak{S}} \) can be regarded as the trace of an isotopy from \( \Lambda\res{L^+_x} \) to \( \Lambda\res{L^-_x} \) (or vice-versa). In such a situation, (a special case of\footnote{In fact, this isotopy is particularly trivial, in the sense that it is actually induced by an isotopy of the front by its construction. In this setting one could prove an equivalence by hand using (frontal) diffeomorphism invariance, but the more sophisticated statement of \cite{GKS12} is more convenient for the manipulation we wish to make.}) the invariance of microsheaves (as a sheaf of categories) under contactomorphism (\cite{NadlerShende-weinstein}), together with the results of \cite{GKS12} establish canonical equivalences between the sheaf categories over \( L^+_x,L^-_x \) and \( \mathfrak{S}. \) More precisely we have (in the notation of the preceding paragraphs):
\begin{proposition}[\cite{GKS12,NadlerShende-weinstein}]\label{prop:XX-is-isotopy}
    Fix an isotopy \( \phi_t \) realizing the movie induced by \( \Lambda\res{\mathfrak{S}}. \) Then there are canonical equivalences
    \[ \m\sh^{\bullet}_{\Lambda}\res{L^-_x}(L^-_X) \leftarrow \m\sh^{\bullet}_{\Lambda}\res{\mathfrak{S}}(\mathfrak{S}) \rightarrow \m\sh^{\bullet}_{\Lambda}\res{L^+_x}(L^+_x). \]
    Moreover, the pushforward
    \[ \phi_*:\m\sh^{\bullet}_{\Lambda}\res{L^+_x} \rightarrow \m\sh^{\bullet}_{\Lambda}\res{L^-_x} \]
    is an isomorphism of sheaves of (\( \infty \)-)categories, inducing the above equivalence on global sections.
\end{proposition}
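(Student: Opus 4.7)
My plan is to deduce both claims from two ingredients already invoked in the setup: the contactomorphism invariance of microsheaves of \cite{Nadler-Shende}, and the GKS sheaf quantization of contact isotopies in its microsheaf incarnation.

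First I would use Gray stability to extend $\phi_t$ to an ambient contact isotopy $\Phi_t$ of a tubular neighborhood of the relevant Legendrians, arranging that $\Phi_1$ carries (a tubular neighborhood of) $L^+_x$ onto $L^-_x$ and that the trace of $\Lambda \cap L^+_x$ under $\Phi_t$ sweeps out exactly $\Lambda\res{\mathfrak{S}}$. The intermediate hypersurfaces $H_t := \Phi_t(L^+_x)$ form a smooth family of slices of $\mathfrak{S}$ transverse to $\mathbb{X}_\Lambda \cap \mathfrak{S}$, so by \cref{lem:stratum-local-constancy}, reduction along the foliation $\{H_t\}_{t \in [0,1]}$ should present $\m\sh^\bullet_\Lambda\res{\mathfrak{S}}$ as (the pushforward of) a locally constant sheaf of categories on $[0,1]$.

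For the equivalences of global sections, symplectic reduction along the foliation yields canonical equivalences $\m\sh^\bullet_\Lambda\res{\mathfrak{S}}(\mathfrak{S}) \simeq \m\sh^\bullet_\Lambda\res{H_t}(H_t)$ for every $t$, which at $t = 0, 1$ recover the two arrows in the statement. The Nadler--Shende invariance theorem applied to $\Phi_t$ should ensure that these equivalences are canonical rather than depending on auxiliary choices.

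For the sheaf-level assertion, the contact isotopy $\Phi_t$ itself will define, via the GKS quantization in the microsheaf setting of \cite{Nadler-Shende}, a functor $\Phi_*: \m\sh^\bullet_\Lambda\res{L^+_x} \to \m\sh^\bullet_\Lambda\res{L^-_x}$ of sheaves of categories, which the invariance theorem asserts is an equivalence. Naturality of the quantization then identifies the induced functor on global sections with the composition of the equivalences from the previous step. The main technical point, though not a substantive obstacle, will be independence from the chosen extension $\Phi_t$: this should follow from contractibility of the space of contact extensions of a prescribed Legendrian movie, together with the homotopy invariance of the GKS kernel under reparametrization of the isotopy.
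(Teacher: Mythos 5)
The paper offers no proof of this proposition: it is stated directly as an instance of the contactomorphism invariance of microsheaves from \cite{Nadler-Shende} together with the GKS sheaf quantization of contact isotopies, and the footnote just before it notes that, because the isotopy is realized by an isotopy of the front, one could alternatively argue via frontal diffeomorphism invariance. You correctly identify both ingredients and the overall shape of the argument (GKS quantization of the isotopy gives the equivalence; independence of the ambient extension comes from contractibility of the space of such extensions), so in that sense your plan matches the paper's intended justification.

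However, your geometric framing runs the construction backwards and conflates two different spaces. The isotopy $\phi_t$ acts on the (reduced) contact manifold, not on the bifurcation slice $\mathfrak{S}$; the hemispheres $L^\pm_x$ and the bisecting disk $\mathfrak{H}$ live in $\mathfrak{S} \subset \mathfrak{B}$, so writing $H_t := \Phi_t(L^+_x)$ and ``$\Phi_1$ carries $L^+_x$ onto $L^-_x$'' is not meaningful. In the paper's setup the family of slices $H_t$ interpolating from $L^+_x$ through $\mathfrak{H}$ to $L^-_x$ is chosen first (it exists because each hemisphere is isotopic rel $\partial\mathfrak{H}$ to $\mathfrak{H}$ through hypersurfaces transverse to $\mathbb{X}_\Lambda \cap \mathfrak{S}$); reducing along each $H_t$ produces the movie of Legendrians; and $\phi_t$ is then a contact isotopy realizing that movie, extended to an ambient isotopy by the Legendrian isotopy extension theorem (not Gray stability, which concerns deformations of contact structures). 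These are corrections of framing rather than missing ideas, but as written your plan muddles which objects live in $\mathfrak{B}$ and which live in the contact manifold, and so would need to be untangled before the GKS/Nadler--Shende invariance could be invoked cleanly.
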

\begin{remark}
    We have been invoking exactly this result of \cite{GKS12} throughout when employing operations such as stabilization, or straightening via isotopy, but we emphasize it's presence here since this situation is a rather more non-trivial isotopy than we have been accustomed to dealing with thus-far in that it is not fibered over the bifurcation space.
\end{remark}
Note that this result implies that there is a unique extension of a sheaf defined over \( L^+_x \) to all of \( \mathfrak{S} \) up to isomorphism.

Now, in the setup above suppose \( \mathcal{F} \in \sh^{\C,1}_{\Lambda}(U) \). Let \( \tilde{\mathcal{F}}^{\circ} \) be the short circular lift of \( \mathcal{F}\res{\mathfrak{S} \setminus \set*{x}} \) furnished by our inductive hypothesis and the preceding section. 
\begin{proposition}\label{prop:XX-extension}
Let \( L^{\pm}_x,\mathfrak{H} \) be as above. Let \( \tilde{\mathcal{F}}^{\circ,+} \) denote the restriction to \( L^+_x, \) and let \( \mathcal{G} \) denote the object over \( \mathfrak{S} \) induced by the equivalence of \cref{prop:XX-is-isotopy}. Then
\[ \mathcal{G}\res{\mathfrak{S} \setminus \set*{x}} \cong \tilde{\mathcal{F}}^{\circ} \]
In particular, \( \tilde{\mathcal{F}}^{\circ} \) can be extended over \( x \).
\end{proposition}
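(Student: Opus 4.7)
The plan is to exhibit $\mathcal{G}\res{\mathfrak{S}\setminus\set*{x}}$ as a short circular lift of $\mathcal{F}\res{\mathfrak{S}\setminus\set*{x}}$ that restricts to $\tilde{\mathcal{F}}^{\circ,+}$ on $L^+_x$, and then appeal to the uniqueness of such lifts established by the inductive construction of \cref{sec:pointlike-sing} to identify it with $\tilde{\mathcal{F}}^{\circ}$.

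First I would verify that $\mathcal{G}\res{\mathfrak{S}\setminus\set*{x}}$ is a lift of $\mathcal{F}\res{\mathfrak{S}\setminus\set*{x}}$. The GKS equivalence of \cref{prop:XX-is-isotopy} is realized by an ambient contactomorphism of a neighborhood of the pre-image of $\mathfrak{S}$; by \cite{GKS,Nadler-Shende} this transports the disk and circular microsheaf categories simultaneously, compatibly with the forgetful comparison studied in \cref{sec:morse-objects}. Since $\tilde{\mathcal{F}}^{\circ,+}$ is by construction a lift of $\mathcal{F}\res{L^+_x}$, its circular GKS transport $\mathcal{G}$ must be a lift of the corresponding disk-side GKS transport of $\mathcal{F}\res{L^+_x}$, which is just $\mathcal{F}\res{\mathfrak{S}}$ since $\mathcal{F}$ is already defined on all of $\mathfrak{S}$. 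Restricting to the punctured disk then gives the required lift.

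Next I would check shortness of $\mathcal{G}\res{\mathfrak{S}\setminus\set*{x}}$. The length spectrum is continuous by \cref{prop:bar-length}, and on $L^+_x$ it coincides with $l_{\tilde{\mathcal{F}}^{\circ,+}}$, which is bounded by the $\epsilon$-shortness of $\mathcal{F}$ (via \cref{prop:short-singular-lift,cor:short-bar-lift}, short lifts share the length spectrum of their underlying disk object). Since the short-versus-long dichotomy is a discrete condition distinguished by bar lengths crossing $\tfrac{1}{2}$, continuity on the connected set $\mathfrak{S}\setminus\set*{x}$ propagates shortness from $L^+_x$ to all of $\mathfrak{S}\setminus\set*{x}$. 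Finally, the inductive hypothesis of \cref{sec:pointlike-sing} applied over the strata in the star of $\mathfrak{x}$ (all of codimension strictly less than $\codim\mathfrak{x}$) yields a canonical isomorphism class of short circular lift of $\mathcal{F}\res{\mathfrak{S}\setminus\set*{x}}$, glued from the unique short lifts over each element of $\mathbb{U}$ meeting only lower-codimensional strata. Both $\tilde{\mathcal{F}}^{\circ}$ and $\mathcal{G}\res{\mathfrak{S}\setminus\set*{x}}$ represent short lifts restricting to $\tilde{\mathcal{F}}^{\circ,+}$ on $L^+_x$, so both belong to this canonical class and are therefore isomorphic.

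The main technical obstacle is the simultaneous transport of disk and circular microsheaves under GKS: one must arrange the ambient contactomorphism realizing $\phi_t$ to preserve both the pre-quantization cone $\sk(W)^{\tau} \cup \Lambda^{\tau}$ used in the circular setting and the disk completion used in the $\C$ setting, in a way compatible with the double-stop lifting of \cref{prop:double-stopped-disks}. Once this compatibility is granted, continuity of the length spectrum under the isotopy and the inductive uniqueness of short lifts combine to give the identification essentially formally.
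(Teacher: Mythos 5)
Your proposal follows the same high-level skeleton as the paper — identify $\mathcal{G}\res{\mathfrak{S}\setminus\set*{x}}$ with $\tilde{\mathcal{F}}^{\circ}$ by appealing to the uniqueness of short lifts — but the step you flag as ``the main technical obstacle'' is exactly the content of the paper's proof, and the route you sketch toward resolving it is not the one the paper takes, nor does it obviously close.

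You frame the compatibility of the GKS transport $\phi_*$ with double stopping as a geometric problem: ``arrange the ambient contactomorphism realizing $\phi_t$ to preserve both the pre-quantization cone $\ldots$ and the disk completion.'' The paper does not try to arrange anything geometric. It instead observes that the singular bar decomposition is \emph{unique} (\cref{prop:singular-bar-decomp}), so $\phi_*$ automatically carries the singular bar decomposition of $\mathcal{F}$ over a contractible open $V_{\mathfrak{x}}$ about a pointlike stratum in $L^+_x$ to that over the corresponding $V_{\mathfrak{y}}$ in $L^-_x$; the isomorphism $\tilde{\mathcal{F}}^{\circ}\res{U_{\mathfrak{y}}} \cong \mathcal{G}\res{U_{\mathfrak{y}}}$ then follows ``by comparing the corresponding pushforwards after double stopping.'' This is a purely categorical/uniqueness argument; no special choice of $\phi_t$ is needed. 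Your proposed geometric compatibility is considerably more than one actually has available (the contactomorphism need not respect the open book binding or the $S^1$ factor), and without an argument it leaves the proof open.

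Once this compatibility is established in the paper's way, your length-spectrum continuity argument becomes redundant: $\mathcal{G}\res{V_{\mathfrak{y}}}$ is exhibited directly as a double-stopped lift with the correct length spectrum, hence is short without further argument. As a self-contained step, your continuity reasoning is also a bit too loose as stated — ``the short-versus-long dichotomy is a discrete condition distinguished by bar lengths crossing $\tfrac{1}{2}$'' relies on the quantitative fact that $\mathcal{F}$ is $\epsilon$-short \emph{uniformly} over $\mathfrak{S}$ (so that no bar length can pass through $\tfrac{1}{2}$), which should be stated. More significantly, even granting shortness, the uniqueness invoked in \cref{prop:short-singular-lift} and \cref{sec:pointlike-sing} is uniqueness among \emph{double-stopped} lifts; shortness alone does not put $\mathcal{G}\res{\mathfrak{S}\setminus\set*{x}}$ in the class to which the uniqueness applies. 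So the uniqueness step in your proposal is not quite closed either, and it circles back to the same unresolved compatibility. In short: the plan is recognizable, but the crux of the argument is deferred, and the deferred step is both the main content of the paper's proof and is resolved there by a cleaner mechanism than the one you propose.
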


\begin{proof}
    Fix an isotopy \( \phi_t \) realizing \( \Lambda\res{\mathfrak{S}} \) as in \cref{prop:XX-is-isotopy}. Note that the pointlike strata of \( \mathbb{X}_{\Lambda} \cap L^+_x \) are naturally identified with those of \( \mathbb{X}_{\Lambda}\cap L^-_x \) via their lifts to \( \tilde{\mathbb{X}}_{\Lambda}, \) and \( \phi_t \) necessarily realizes this identification. 

    Let \( \mathfrak{x} \) such a stratum of over \( L^+_x, \) and \( \mathfrak{y} \) the corresponding stratum over \( L^-_x \). Then by uniqueness \( \phi_* \) carries a singular bar decomposition of \( \mathcal{F} \) over a contractible open \( V_{\mathfrak{x}} \) over \( \mathfrak{x} \) onto a singular bar decomposition of \( \mathcal{F} \) over the corresponding \( V_{\mathfrak{y}} \) over \( \mathfrak{y}. \) It follows that 
    \[ \tilde{\mathcal{F}}^{\circ}\res{U_{\mathfrak{y}}} \cong \mathcal{G}\res{U_{\mathfrak{y}}} \]
    by comparing the corresponding pushforwards after double stopping. Homotopies trivializing restriction similarly pushforward on the level of disk categories, and thus so do their lifts. 

    Over nearby XX-strata of lower codimension, we have obtained our lift by the same procedure; in particular the pushforward restricts over slices to such strata, and the isomorphism induced by pushforward verifies the required compatibility of lifts. 
\end{proof}
This yields lifts over every element of \( \mathbb{U}, \) for which compatibility (up to isomorphism class) may be verified analogously to \cref{sec:compatible-pointlike-lifts}. Higher homotopies follow similarly: we may choose the isotopy by which we propagate across the XX locus coherently over each component, then the restriction away from the XX locus is handled by the arguments for pointlike singularities above. Thus we've produced globally coherent lifts, and so completed the proof of \cref{thm:pushing-out}.

\section{Legendrian non-squeezing}\label{sec:non-squeezing}
In this section we'll show that the lifts of an assortment of familiar Lagrangians in \( \C^n \) are not squeezable. In particular, we compute the categories \( \sh^{\C}_{\Lambda} \) and \( \sh^{\cyl}_{\Lambda}. \)

\subsection{The unknot}\label{sec:unknots}
The fundamental example is the non-trivial unknot considered in \cref{sec:3d-nonsqueezing}, whose front is drawn there.

The front divides the cylinder into two unbounded pieces, so it follows immediately that \( \sh^{\cyl}_{\Lambda} \) is trivial. On the other hand, after embedding into the standard open book, \( \Lambda \) is Legendrian isotopic to a local unknot (by passing a smooth strand through the binding), and thus supports a non-trivial microsheaf. In particular, \( \sh^{\C}_{\Lambda} \simeq \vect_{\kk_2}, \) the (2-periodic) derived category of finite dimensional vector spaces. Under this equivalence (given by the microstalk) the microlocal-rank-1 objects are exactly the vector spaces of rank 1, so there are 2 non-isomorphic objects of \( \sh^{\C,1}_{\Lambda}. \) Thus, as a consequence of \cref{thm:pushing-out}, \( \Lambda \) is non-squeezable. 

\subsection{\(T^2_{cl}(1,1)\)}\label{sec:2d-clifford}
In the case \( \Lambda \) is a Legendrian lift of the monotone Clifford torus of dimension 2, bounding holomorphic disks of minimal area 1, we can also compute microsheaves by hand. The front of this lift is the spin of the front:
\begin{center}
   \begin{tikzpicture}[mydash/.style={dashed,dash pattern=on 1.5pt off 1pt}]
    \draw (0,0) ellipse (0.5 and 1);
    \draw (15,-1) arc(-90:90:0.5 and 1);
    \draw[mydash] (15,1) arc(90:270:0.5 and 1);
    \draw (0,-1) -- (15,-1);
    \draw (0,1) -- (15,1);
    \draw[thick,mydash,gray] (1.5,0) to[out=0,in=180] (4.5,1);
    \draw[thick,mydash,gray] (4.5,-1) to[out=180, in=0] (1.5,0);
    \draw[thick] (4.5,1) to[out=0,in=180] (10.5,-1);
    \draw[thick] (4.5,-1) to[out=0,in=180] (10.5,1);
    \draw[thick,mydash,gray] (10.5,1) to[out=0,in=180] (13.5,0);
    \draw[thick,mydash,gray] (10.5,-1) to[out=0,in=180] (13.5,0);
   \end{tikzpicture}
\end{center}
About the central intersection fiber.

In the case of \( \sh^{\C,1}_{\Lambda} \) we can apply the spin of the isotopy from the non-trivial unknot to a local one, and obtain the usual front for the Clifford Legendrian induced by the central toric fiber of \( \C\mathbb{P}^2, \) whose isomorphism classes of objects over \( \kk_2 \) are in bijection with (shifts of) elements of \( \kk^{\times} \setminus \set*{1} \) (cf. \cite{Casals-Zaslow}*{\S 6.1.1}).

On the other hand, in the case of \( \sh^{\cyl,1}_{\Lambda} \) we can resolve the conical singularity at the axis of the spinning to the following local bifurcation
 picture (see \cite{Rizell-surfaces}*{\S 3} for a detailed construction):
\begin{center}
   \begin{tikzpicture}
        \draw[dashed] (0,0) to [bend right=25] (3,0);
        \draw[dashed] (0,0) to [bend left=25] (0,-3);
        \draw[dashed] (3,0) to [bend right=25] (3,-3);
        \draw[dashed] (0,-3) to [bend left=25] (3,-3);
        \draw[dotted] (0,0) to (3,-3);
        \draw (0,-3) to (3,0);
   \end{tikzpicture}
\end{center}

Here the dashed lines denote cusps, and the solid and dotted lines denote crossings, one thought of as above, and one below the envelope enclosed by this the front over this region.

Then by the cusp conditions, every sheaf microsupported on this front simply pairs the strands of the front in the unique way away from the resolved singularity, and it is simply a matter of determining how to extend this over the central region, in particular, what happens at the crossings. Computing directly, one finds that there is a unique (up to isomorphism) such extension over any finite field, and thus exactly one isomorphism class of object of \( \sh^{\cyl,1}_{\Lambda} \) in this case. 

We thus conclude that the lift of the monotone Clifford Lagrangian of dimension 2, and area 1 is not squeezable.


\subsection{Bifurcation suspensions}\label{sec:bif-suspensions}
Let us now describe a general class of Legendrians which possess fronts particularly amenable to the calculation of microsheaves. 

Let \( \Lambda \subset \widehat{T^*\R^{k}} \) be a closed, smooth Legendrian, \( M^{n-k} \) a closed, smooth manifold, and \( \phi:M \rightarrow \R^{n} \) a smooth embedding with trivial normal bundle. Fix a trivialization \( \nu(\phi) \cong M \times \R^k, \) and a tubular neighborhood \( \psi:M \times B^k(r) \rightarrow \R^{n} \) of \( \phi(M). \)

Identifying the bifurcation space over which \( \Lambda \) lives with \( \R^k \), by rescaling we may assume that \( \Pi(\Lambda) \) lands in \( B^k(r). \) Then \( \psi \times \id_{S^1} \) induces a front for a closed, smooth Legendrian submanifold of \( \widehat{T^*\R^n} \) which is diffeomorphic to \( M \times \Lambda. \) We denote this new Legendrian by \( \Sigma_{\phi}\Lambda. \)

In fact, the above construction yields a family of (not necessarily non-isotopic) Legendrians indexed by \( \alpha \in \hom(\pi_1(M;\Z),\Z) \) by rotating the \( S^1 \) factor of the front the specified number of times along a given loop. We denote the Legendrian obtained this way by \( \Sigma^{\alpha}_{\phi}(\Lambda), \) with the convention that \( \alpha = 0 \) when the superscript is omitted.

The advertised feature of this construction is relative ease in computation of microsheaves of its output:
\begin{theorem}\label{thm:suspended-microsheaves}
    \( \mush^{\bullet}_{\Sigma^{\alpha}_{\phi}(\Lambda)} \) is (quasi-)equivalent to the category of locally constant sheaves on \( M \) valued in \( \sh^{\bullet}_{\Lambda}. \)
\end{theorem}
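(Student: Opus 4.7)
The plan is to establish the equivalence by local-to-global descent from a good cover of $M$, reducing the global statement to a local computation handled by the stabilization principle for microsheaves.

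First, I would fix a good cover $\{U_\beta\}$ of $M$. Over each $U_\beta$ the chosen trivialization of $\nu(\phi)$, combined with the tubular neighborhood $\psi$ and the identity on the $S^1$ factor, produces a contactomorphism identifying $\psi(U_\beta \times B^k(r)) \times S^1 \subset \widehat{T^*\R^n}$ with $U_\beta \times \widehat{T^*\R^k}\res{B^k(r)}$. Since $U_\beta$ is contractible the twist $\alpha$ restricts trivially over it, so under this identification $\Sigma^\alpha_\phi(\Lambda)\res{U_\beta}$ is sent to the product $U_\beta \times \Lambda$. Contact-invariance of $\mush$, combined with the Nadler--Shende stabilization lemma recalled in \cref{sec:microsheaves}, then provides a canonical equivalence between $\mush^{\bullet}_{\Sigma^\alpha_\phi(\Lambda)}\res{U_\beta}$ and the constant sheaf of $\infty$-categories on $U_\beta$ with value $\sh^{\bullet}_{\Lambda}$.

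Next I would analyze the transition data. On each overlap $U_\beta \cap U_\gamma$ the two local trivializations differ by an ambient contactomorphism which, after shrinking the cover if necessary, acts on the $\widehat{T^*\R^k}$ factor only through a Reeb rotation prescribed by a cocycle representative of $\alpha$. Reeb rotation acts on $\sh^{\bullet}_{\Lambda}$ by a canonical autoequivalence (by contact-isotopy invariance of microsheaves), so these transitions assemble into \v{C}ech descent data for a locally constant sheaf of $\infty$-categories on $M$ with fiber $\sh^{\bullet}_{\Lambda}$ and monodromy determined by $\alpha$. Appealing to $\infty$-categorical descent for $\mush^{\bullet}_{\Sigma^\alpha_\phi(\Lambda)}$, as discussed in \cref{sec:infty-sheaves}, will then identify the global sections of both sides and yield the claimed equivalence.

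I expect the main obstacle to be arranging the local equivalences to be homotopy-coherent, so that the descent can actually be executed. My plan is to promote the local identifications to equivalences of sheaves of $\infty$-categories on each $U_\beta$ from the start, and to choose compatible transition data on finite intersections using the iso-comma-category machinery of \cref{sec:infty-cats}: contractibility of a good cover and of its finite intersections ensures that such choices exist and are unique up to contractible ambiguity. A secondary technical point will be confirming that the contactomorphism realizing each transition truly induces the monodromy specified by $\alpha$, which reduces to tracking the $S^1$-coordinate under the change of local trivialization of $\nu(\phi)$ and follows directly from the construction of $\Sigma^\alpha_\phi(\Lambda)$.
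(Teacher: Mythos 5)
Your approach is essentially the same as the paper's, and for the most part correctly reconstructs it. Both arguments restrict attention to a tubular neighborhood of $\phi(M)$, invoke the Nadler--Shende stabilization/contact-invariance lemma to identify the local category over each piece of a good cover with $\sh^{\bullet}_{\Lambda}$, and then assemble these via $\infty$-categorical descent. The paper is slightly more streamlined: it first uses the support condition to replace the global category by $\psi_*\mush^{\bullet,\nu M}_{\Sigma^\alpha_\phi(\Lambda)}$, then pushes forward along the bundle projection $\nu M \to M$ and observes the result is a constant sheaf of $\infty$-categories on $M$, after which global sections are computed in one step as functors from the \v{C}ech nerve. You instead build the descent data on $M$ directly; this is fine, and your explicit appeal to the iso-comma machinery of \cref{sec:infty-cats} for the coherence is a reasonable alternative to the paper's implicit appeal to constancy.

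There is, however, one place where your conclusion does not quite match the theorem statement and where you need an additional observation. You correctly identify the transitions on overlaps as Reeb rotations dictated by the cocycle $\alpha$, and conclude that you obtain ``a locally constant sheaf of $\infty$-categories on $M$ with fiber $\sh^{\bullet}_{\Lambda}$ and monodromy determined by $\alpha$.'' But the theorem asserts an equivalence with the category of $\sh^{\bullet}_{\Lambda}$-valued local systems on $M$, with no dependence on $\alpha$; this is the category of global sections of the \emph{constant} sheaf of $\infty$-categories, not a twisted one. To bridge this, you need to observe that the relevant transitions are \emph{integral} multiples of the full Reeb rotation, which on $\widehat W = (\R/\Z) \times W$ with contact form $dt + \lambda$ are literally the identity contactomorphism. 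Thus the monodromy is trivial and your descent datum is in fact that of a constant sheaf of categories, which then gives exactly the advertised category. (The paper's own proof asserts constancy — ``that this is the constant $\infty$-sheaf on $M$'' — without spelling out why the $\alpha$-twist contributes nothing, so the same observation is implicitly required there; your write-up actually surfaces this subtlety more honestly, and only needs the one extra line to close it.)
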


\begin{proof}
    By our support conditions (vanishing stalk outside the bifurcation image of \( \Lambda \)) 
    \[ \mush^{\bullet}_{\Sigma^{\alpha}_{\phi}(\Lambda)} \cong \psi_*\mush^{\bullet,\nu M}_{\Sigma^{\alpha}_{\phi}(\Lambda)}, \]
    where the latter sheaf of categories is microsheaves on the appropriate filling of the pre-quantization of the normal bundle to \( M \) with microsupport on the natural Legendrian. Since \( \psi \) is a smooth embedding, and microsheaves is compactly supported, \( \psi_* \) is fully faithful, and it suffices to compute \( \mush^{\bullet,\nu M}_{\Sigma^{\alpha}_{\phi}(\Lambda)}. \) 

    Pushing down along the projection \( \nu M \rightarrow M, \) we find, reducing along fibers of the normal bundle and applying local triviality of the suspension, that this is the constant (\( \infty \)-)sheaf on \( M \) with stalk \( \sh^{\bullet}_{\Lambda} \) (global sections of microsheaves on \( \Lambda \).) By constancy, and the definition of sheaf, global sections are thus computed by functors from the \v{C}ech nerve of a good cover to \( \sh^{\bullet}_{\Lambda}, \) which are naturally identified with the locally constant sheaves on \( M \) by extending from the cover to a base for the topology of \( M \) by isomorphisms.
\end{proof}

\subsubsection{Whitney spheres \& twist tori}\label{sec:twist-torus-fronts}
The above construction can also be understood symplectically: If a closed Legendrian \( \Lambda \subset \widehat{T^*\R^n} \) has pre-quantization projection an embedded Lagrangian, then any bifurcation suspension of \( \Lambda \) does as well. Moreover, if \( \Lambda \) is monotone, then so is \( \Sigma_{\phi}(\Lambda). \) Applying this to a lift of \( T^{n}_{cl}(1,\ldots,1) \) along a standard \( S^1 \rightarrow \R^2 \hookrightarrow \R^{n+1} \) it is not hard to check that one obtains a Legendrian lift of the twist torus (in the sense of \cite{Chekanov-Schlenk}) corresponding to our original Clifford torus. Indeed, the original construction of \cite{Chekanov-tori} presents Chekanov's monotone twist tori as exactly such suspensions along embedded copies of neighborhoods of the 0-section in \( T^*S^1. \) Iterating, one obtains all such ``twist tori'' in this way.

Similarly, the unknot of \cref{sec:unknots} suspended along the standard embedding of the unit sphere yields a Legendrian lift of the Maslov-2 resolution of the Lagrangian Whitney sphere of area 1 (in dimension 2 this recovers the Chekanov torus). 

By \cref{thm:suspended-microsheaves} the microlocal rank 1 objects are exactly local systems in the microlocal rank 1 objects of \( \sh^{\bullet}_{\Lambda}. \) When \( \Lambda \) is the unknot of \cref{sec:unknots} there is exactly one object of \( \sh^{\C,1}_{\Lambda} \), and \( \sh^{\cyl,1}_{\Lambda} \) is empty. Thus there are no microlocal-rank-1 cylindrical local systems for any suspension, and there is always at least one disk object provided by the constant sheaf. The non-monotone Clifford tori mentioned in \cref{thm:high-d-squeezing} are just suspensions of the non-trivial unknot along non-trivial \( \alpha \in H^1(M;\Z) \), and so also follow by the same computation. Similarly, when \( \Lambda \) is a lift of \( T^2_{cl}(1,1), \) \( \sh^{\cyl,1}_{\Lambda} \) is simply rank-1 local systems in \( \vect_{\kk} \) on the base of the suspension, while \( \sh^{\C,1}_{\Lambda} \) is local systems valued in \( \sh^{\C,1}_{T^2_{cl}}. \) In particular, over \( \F_p \) the former category has cardinality bounded by the cardinality of the cohomology of \( M \) with \( \F_p \) coefficients, while the latter has (for large \(p\)) of order \( p \) times the cardinality of \( H^{\bullet}(M;\F_p) \) many isomorphism classes, again violating the inequality of \cref{thm:pushing-out}. Applying this to twists of the 1-dimensional and 2-dimensional Clifford tori, and resolutions of the Whitney sphere, we conclude \cref{thm:high-d-squeezing} in the case \( A = 1. \) The general case \( A \in \N \) follows by passing to an \( n \)-fold cover of the pre-quantization.

\bibliographystyle{alpha}
\bibliography{legendrian_squeezing}{}
\end{document}